\documentclass[12pt]{article}

 \usepackage[vmargin=1.18in,hmargin=1.1in]{geometry}
 \usepackage{amssymb,amsfonts,graphicx,amsthm,nicefrac,mathtools,bm, bbm}
 \usepackage[hidelinks]{hyperref}
 \usepackage[normalem]{ulem} 
 \usepackage{mathrsfs}
 \usepackage{pdflscape}

 \usepackage[dvipsnames]{xcolor}
 \usepackage{graphics,tikz}

\usepackage[]{caption}

\usepackage[linesnumbered,ruled]{algorithm2e}
\let\oldnl\nl
\newcommand{\nonl}{\renewcommand{\nl}{\let\nl\oldnl}}
\makeatother

\usepackage[uniquename=false, uniquelist = false, url = false, doi = false, isbn = false, sorting = none, natbib = true, backend = bibtex, maxbibnames=10]{biblatex}

\addbibresource{~/.references/crtpower}


 


 \newtheorem{theorem}{Theorem}
 \newtheorem{proposition}{Proposition}
 
 \newtheorem{lemma}{Lemma}
 \newtheorem{corollary}{Corollary}

 \theoremstyle{definition}
 
   \newtheorem{setting}{Setting}
 \theoremstyle{remark}




 \setlength{\headheight}{35pt}

 
 
 
 

 \def\independenT#1#2{\mathrel{\rlap{$#1#2$}\mkern2mu{#1#2}}}
 \newcommand\independent{\protect\mathpalette{\protect\independenT}{\perp}}

 \newcommand{\ignore}[1]{}

 \newcommand{\eps}{\epsilon}

 \newcommand{\prx}{\bm X} 
  
 \newcommand{\srx}{X}
 
 \newcommand{\sfx}{x}

\newcommand{\prz}{\bm Z} 
  
\newcommand{\srz}{Z}

\newcommand{\sfz}{z}

 \newcommand{\prxk}{{{\widetilde{\bm X}}}}
 \newcommand{\srxk}{\widetilde X}
 \newcommand{\sfxk}{\widetilde x}
 
 \newcommand{\pry}{{\bm Y}}

 \newcommand{\sry}{Y}
 
 \newcommand{\sfy}{y}
 
\newcommand{\seps}{\epsilon}
\newcommand{\peps}{\bm \epsilon}

  \def\CRT{\textnormal{CRT}}
 




\begin{document}

\title{On the power of conditional independence testing \\ under model-X}
\author{Eugene Katsevich$^1$ and Aaditya Ramdas$^{1,2}$\\
	[10pt]
	\texttt{ekatsevi@wharton.upenn.edu},
	\texttt{aramdas@stat.cmu.edu} \\
	Department of Statistics and Data Science, University of Pennsylvania$^1$\\
	Department of Statistics and Data Science, 	Carnegie Mellon University$^2$\\
	Machine Learning Department, Carnegie Mellon University$^2$
}

\maketitle
\thispagestyle{empty}

\begin{abstract}
For testing conditional independence (CI) of a response $Y$ and a predictor $X$ given covariates $Z$, the recently introduced model-X (MX) framework has been the subject of active methodological research, especially in the context of MX knockoffs and their successful application to genome-wide association studies. In this paper, we study the power of MX CI tests, yielding quantitative insights into the role of machine learning and providing evidence in favor of using likelihood-based statistics in practice. Focusing on the conditional randomization test (CRT), we find that its conditional mode of inference allows us to reformulate it as testing a point null hypothesis involving the conditional distribution of $X$. The Neyman-Pearson lemma then implies that a likelihood-based statistic yields the most powerful CRT against a point alternative. We also obtain a related optimality result for MX knockoffs. Switching to an asymptotic framework with arbitrarily growing covariate dimension, we derive an expression for the limiting power of the CRT against local semiparametric alternatives in terms of the prediction error of the machine learning algorithm on which its test statistic is based. Finally, we exhibit a resampling-free test with uniform asymptotic Type-I error control under the assumption that \textit{only the first two moments of $X$ given $Z$ are known}, a significant relaxation of the MX assumption. 
\end{abstract}
	


\color{black}

\section{Introduction}

\subsection{Conditional independence testing and the MX assumption}

Given a predictor $\prx \in \mathbb R^{d}$, response $\pry \in \mathbb R^{r}$, and covariate vector $\prz \in \mathbb R^{p}$ drawn from a joint distribution $(\prx, \pry, \prz) \sim \mathcal L$, consider testing the hypothesis of conditional independence (CI),
\begin{equation}
H_0: \pry \independent \prx\ |\ \prz \quad \text{versus} \quad H_1: \pry \not \independent \prx\ |\ \prz,
\label{conditional-independence}
\end{equation}
using $n$ data points 
\begin{equation}
\label{eq:xyz}
(\srx, \sry, \srz) \equiv \{(\srx_i, \sry_i, \srz_i)\}_{i = 1, \dots, n} \overset{\text{i.i.d.}}\sim \mathcal L. 
\end{equation}
This fundamental problem---determining whether a predictor is associated with a response after controlling for a set of covariates---is ubiquitous across the natural and social sciences. To keep an example in mind throughout the paper, consider $\pry \in \mathbb R^1$ cholesterol level, $\prx \in \{0,1,2\}^{10}$ the genotypes of an individual at 10 adjacent polymorphic sites, and $\prz \in \{0,1,2\}^{500,000}$ the genotypes of the individual at other polymorphic sites across the genome. Such data $(\srx, \sry, \srz)$ would be collected in a genome-wide association study (GWAS), with the goal of testing for association between the 10 polymorphic sites of interest and cholesterol while controlling for the other polymorphic sites~\eqref{conditional-independence}. CI testing is also connected to causal inference: with appropriate unconfoundedness assumptions, Fisher's sharp null hypothesis of no effect of a (potentially non-binary) treatment $\prx$ on an outcome $\pry$ implies conditional independence. While we do not work in a causal framework, we draw inspiration from connections to causal inference throughout.

As formalized by Shah and Peters \cite{Shah2018}, the problem \eqref{conditional-independence} is fundamentally impossible without assumptions on the distribution $\mathcal L(\prx,\pry, \prz)$, in which case no asymptotically uniformly valid test of this hypothesis can have nontrivial power against \emph{any} alternative. In special cases, the problem is more tractable, for example if $\prz$ has discrete support, or if we were willing to make (semi)parametric assumptions on the form of $\mathcal L(\pry|\prx, \prz)$ (henceforth ``model-$Y|X$''). We will not be making such assumptions in this work.
Instead, we follow the lead of Candes et al. \cite{CetL16}, who proposed to avoid assumptions on $\mathcal L(\pry|\prx, \prz)$, but assume that we have access to $\mathcal L(\prx|\prz)$:\footnote{Candes et al. actually require that the full joint distribution $\mathcal L(\prx,\prz)$ is known, but this is because they also test for conditional associations between $\prz$ and $\pry$. We focus only on the relationship between $\prx$ and $\pry$ given $\prz$ and therefore require a weaker assumption.} 
\begin{equation} \label{eq:modelX}
\textit{model-X (MX) assumption}: \mathcal L(\prx| \prz)=  f^*_{\prx|\prz} \text{ for some known } f^*_{\prx|\prz}.\footnote{We implicitly assume that $\mathcal L$ has a density with respect to some dominating measure on $\mathbb R^{1+1+p}$, and that all conditional densities are well-defined almost surely. Here and throughout the paper, we identify probability distributions with their densities with respect to the appropriate dominating measure.}
\end{equation}
Candes et al.\ argue that while both model-$Y|X$ and MX are strong assumptions---especially when $p,d$ are large---in certain cases much more is known about $\prx | \prz$ than about $\pry|\prx, \prz$. In the aforementioned GWAS example, $\prx|\prz$ reflects the joint distribution of genotypes at SNPs across the genome, which is well described by hidden Markov models from population genetics \cite{SetC17}. On the other hand, the distribution $\pry|\prx,\prz$ represents the genetic basis of a complex trait, about which much less is known. In the context of (stratified) randomized experiments, the distribution $\mathcal L(\prx | \prz)$ is the propensity function \cite{Imai2004} (the analog of the propensity score for non-binary treatments \cite{Rosenbaum1983}) and is experimentally controlled. In general causal inference contexts, the MX assumption can be viewed as the assumption that the propensity function is known. 

\subsection{MX methodology and open questions}

Testing CI hypotheses in the MX framework has been the subject of active methodological research. The most popular methodology is MX knockoffs \cite{CetL16}. This method is based on the idea of constructing synthetic negative controls (knockoffs) for each predictor variable in a rigorous way that is based on the MX assumption; see Section~\ref{sec:knockoffs-overview} for a brief overview. Rapid progress has been made on the construction of knockoffs in various cases \cite{SetC17,Romano2019a,Bates2019,Huang2019} and on the application of this methodology to GWAS \cite{SetC17, SetS19}. The conditional randomization test (CRT) \cite{CetL16}, initially less popular than knockoffs due to its computational cost, is receiving renewed attention as computationally efficient variants are proposed, such as the holdout randomization test (HRT) \cite{Tansey2018}, the digital twin test \cite{Bates2020}, and the distilled CRT (dCRT) \cite{Liu2020}. The dCRT in particular is a promising methodology because it combines good power and computational speed; we focus on this variant of the CRT is Sections~\ref{sec:weakening} and~\ref{sec:asymptotic-power} of this paper. We introduce the general CRT methodology next, while deferring the introduction of the dCRT to Section~\ref{sec:weakening}. 

We start with any test statistic $T(\srx, \sry, \srz)$ measuring the association between $\prx$ and $\pry$, given $\prz$. Usually, this statistic involves learning some estimate $\widehat f_{\pry|\prx,\prz}$ based on machine learning, e.g. the magnitude of the fitted coefficient for $\prx$ (when $\text{dim}(\prx) = 1$) in a cross-validated lasso~\cite{T96} of $\sry$ on $\srx$ and $\srz$ \cite{CetL16}. To calculate the distribution of $T$ under the null hypothesis~\eqref{conditional-independence}, first define a matrix  $\srxk \in \mathbb R^{n \times d}$, where the $i$th row $\srxk_i$ is a sample from $\mathcal L(\prx \mid \prz = \srz_i)$. In other words, for each sample $i$, resample $\srx_i$ based on its distribution conditional on the observed covariate values $Z_i$ in that sample. We then use these resamples to build a null distribution $T(\srxk, \sry, \srz)$, from which the upper quantile
\begin{equation}
C(\sry,\srz) \equiv Q_{1-\alpha}[T(\srxk, \sry, \srz)|\sry,\srz]
	\label{upper-quantile}
\end{equation}
may be extracted (the dependence on $\alpha$ left implicit), where the randomness is over the resampling distribution $\srxk |  \sry, \srz$. Finally, the CRT rejects if the original test statistic exceeds this quantile:

\begin{equation}
	\label{eq:randomized-CRT}
	\phi^{\CRT}_T(\srx,\sry,\srz) \equiv 
	\begin{cases}
		1, \quad &\text{if }  T(\srx, \sry, \srz) > C(\sry,\srz); \\
		\gamma, \quad &\text{if }  T(\srx, \sry, \srz) = C(\sry,\srz); \\
		0, \quad &\text{if }T(\srx, \sry, \srz) < C(\sry,\srz).
	\end{cases}
\end{equation}
In order to accommodate discreteness, the CRT makes a randomized decision $\gamma$ when $T(\srx, \sry, \srz) = C(\sry,\srz)$ so that the size of the test is exactly $\alpha$.
In practice, the threshold $C(\sry,\srz)$ is approximated by computing $T(\srxk^b, \sry, \srz)$ for a large number $B$ of Monte Carlo resamples $\srxk^b \sim \srx|\srz$. 
For the sake of clarity, this paper considers only the ``infinite-$B$" version of the CRT as defined by~\eqref{upper-quantile} and \eqref{eq:randomized-CRT}. In the causal inference setting, the CRT can be viewed as a variant of Fisher's exact test for randomized experiments that incorporates strata of covariates \cite{Zheng2008,Hennessy2016}, basing inference on rerandomizing the treatment to the units. 

The CI testing problem under MX has benefited from several methodological innovations, but fundamental questions regarding power and optimality have received less attention. Therefore, in this paper we address the following two primary questions:
\begin{enumerate}
	\item[Q1.] Are there ``optimal" test statistics for MX methods, in any sense?
	\item[Q2.] What is the precise connection between the performance of the machine learning algorithm and the power of the resulting MX method?
\end{enumerate}
To the best of our knowledge, Q1 has not been considered before, while Q2 has only been indirectly addressed in the context of lasso-based knockoffs \cite{Weinstein2017, Liu2019, Fan2020, Weinstein2020} and CRT \cite{Wang2020b, Celentano2020}. The present paper complements these existing works by considering arbitrary machine learning methods. We summarize our findings next.

\subsection{Our contributions}

We find that for the MX CI problem, the CRT is more natural to analyze; it is simpler to analyze than MX knockoffs and is applicable for testing even a single conditional independence hypothesis. Thus, we focus mainly on the CRT in the present paper. We obtain the following nontrivial answers to the questions posed above.

\paragraph{A1: Conditional inference leads to finite-sample optimality against point alternatives.}

While the composite nonparametric alternative of the CI problem~\eqref{conditional-independence} suggests that we cannot expect to find a uniformly most powerful test, we may still ask what is the most powerful test against a point alternative. Restricting our attention to tests valid conditionally on $(\sry, \srz)$ (as the CRT is) allows us to reduce the composite null to a point null. We can therefore apply the Neyman-Pearson lemma to show (Section~\ref{sec:power}) that the optimal conditionally valid test against a point alternative $\mathcal L$ with $\mathcal L(\pry|\prx,\prz) = \bar f_{\pry|\prx,\prz}$ is the CRT based on the likelihood test statistic:
\begin{equation}
	T^{\text{opt}}(\srx; \sry, \srz) \equiv \prod_{i = 1}^n \bar f(\sry_i|\srx_i, \srz_i).
\end{equation}
The same statistic yields the most powerful one-bit $p$-values for MX knockoffs (Section~\ref{sec:knockoffs}). Despite the simplicity of this result, it has not been derived before and appears central to the design of powerful test statistics. Since the model for $Y|X,Z$ is unknown, this result provides our first theoretical indication of the usefulness of machine learning models to learn this distribution (Q2). A2 below gives a more quantitative answer to Q2.

\paragraph{A2: The prediction error of the machine learning method impacts the asymptotic efficiency of the dCRT but not its consistency.}

It has been widely observed that the better the machine learning method approximates $\pry|\prx,\prz$, the higher power the MX method will have. We put this empirical knowledge on a theoretical foundation by expressing the asymptotic power of the dCRT in terms of the prediction error of the underlying machine learning method (Section~\ref{sec:asymptotic-power}). In particular, we consider semiparametric alternatives of the form
\begin{equation}
	H_1: \mathcal L(\pry|\prx,\prz) = N(\prx^T\beta + g(\prz),\sigma^2).
	\label{parametric-alternative-intro}
\end{equation}
We analyze the power of a dCRT variant that employs a separately trained estimator $\widehat g$ in an asymptotic regime where $d = \text{dim}(\prx)$ remains fixed while $p = \text{dim}(\prz)$ grows arbitrarily with the sample size $n$. We find that this test is consistent no matter what $\widehat g$ is used, while its asymptotic power against local alternatives $\beta_n = h/\sqrt{n}$ depends on the limiting mean-squared prediction error of $\widehat g$ (denoted $\mathcal E^2$) and the limiting expected variance $\mathbb E[\text{Var}[\prx | \prz]]$ (denoted $s^2$). For example, if $d = 1$, the
\begin{equation*}
	\begin{split}
		\text{dCRT power converges to that of normal location test under alternative } N\smash{\left(\frac{hs}{\sqrt{\sigma^2 + \mathcal E^2}},1\right)}.
	\end{split}
\end{equation*}
This represents the first explicit quantification of the impact of machine learning prediction error on the power of an MX method. \\

On the way to addressing Q2, we additionally establish a third result (Section~\ref{sec:weakening}) that may be of independent interest:

\paragraph{A resampling-free second-order approximation to the dCRT is equivalent to the dCRT and controls Type-I error under weaker assumptions.}

It was recently pointed out that if $\mathcal L(\prx|\prz)$ is Gaussian, then the resampling distribution of the dCRT test statistic can be found in closed form without actual resampling \cite{Liu2020}. Here we show that the resampling-free dCRT based on the first two moments of $\mathcal L(\prx|\prz)$ is asymptotically equivalent to the dCRT based on $\mathcal L(\prx|\prz)$ itself. Furthermore, we show the former test has asymptotic Type-I error control under the
\begin{equation}
	\begin{split}
		&\textit{MX(2) assumption:} \text{ the first two moments of $\prx|\prz$ are known, i.e.} \\
		&\mathbb E_{\mathcal L}[\prx|\prz] = \mu(\prz) \text{ and } \text{Var}_{\mathcal L}[\prx|\prz] = \Sigma(\prz) \text{ for known } \mu(\cdot), \Sigma(\cdot).
		\label{MX(2)-intro}
	\end{split}
\end{equation}
This assumption is weaker than the full MX assumption, complementing existing work \cite{Huang2019, Barber2020} on weakening assumptions for MX methods. It also suggests that the resampling-free dCRT may be used in place of the usual dCRT while achieving similar power and controlling Type-I error asymptotically.

\paragraph*{}

These advances shed new light on the nature of the MX problem and can inform methodological design. Our results handle multivariate $\prx$, arbitrarily correlated designs in the model for $\prx$, and any black-box machine learning method to learn $\widehat g$. 
\paragraph{Notation.}

Recalling equations~\eqref{conditional-independence} and \eqref{eq:xyz}, population-level variables (such as $\prx,\pry,\prz$) are denoted in boldface, while samples of these variables (such as $\srx_i,\sry_i,\srz_i$) are denoted in regular font. Note that boldface does \textit{not} distinguish between scalars, vectors, and matrices, as it is sometimes employed. The dimensions of the object in this paper will be clear from context. All vectors are treated as column vectors. 
We often use uppercase symbols to denote both random variables and their realizations (for either population- or sample-level quantities), but use lowercase to denote the latter when it is important to make this distinction. We use $\mathcal L$ to denote the joint distribution of $(\prx,\pry,\prz)$, though we sometimes use this symbol to denote the joint distribution of $(\srx,\sry,\srz)$ as well. We use the symbol ``$\equiv$" for definitions. We denote by $c_{d,1-\alpha}$ the $1-\alpha$ quantile of the $\chi^2_d$ distribution, and by $\chi^2_d(\lambda)$ the non-central $\chi^2$ distribution with $d$ degrees of freedom and noncentrality parameter $\lambda$.

\section{The most powerful CRT against point alternatives} \label{sec:power}

In this section, we seek the most powerful CRT against a point alternative. To accomplish this, we make the observation---implicit in earlier works---that the CRT is valid not just unconditionally but also conditionally on $\sry, \srz$ (Section~\ref{sec:bridge}). The latter conditioning step reduces the composite null to a point null. This reduction allows us to invoke the Neyman Pearson lemma to find the most powerful test  (Section~\ref{sec:NP}). Proofs are deferred to the appendix.

\subsection{CRT is conditionally valid and implicitly tests a point null} \label{sec:bridge}

Let us first formalize the definition of a level $\alpha$ test of the MX CI problem. The null hypothesis is defined as the set of joint distributions compatible with conditional independence and with the assumed model for $\prx|\prz$:
\begin{equation}
	\label{eq:null-under-modelX}
	\begin{split}
		\mathscr L^{\text{MX}}_0(f^*) &\equiv \mathscr L_0 \cap \mathscr L^{\text{MX}}(f^*) \\
		&\equiv \{\mathcal L: \prx \independent \pry \mid \prz\} \cap \{\mathcal L: \mathcal L(\prx|\prz) = f^*_{\prx|\prz}\}\\
		&= \{\mathcal L: \mathcal L(\prx,\pry,\prz) = f_{\prz} \cdot f^*_{\prx|\prz} \cdot f_{\pry|\prz} \text{ for some } f_{\prz}, f_{\pry|\prz} \}.
	\end{split}
\end{equation}
A test $\phi: (\mathbb R^{d} \times \mathbb R^r \times \mathbb R^p)^n \rightarrow [0,1]$ of the MX CI problem is said to be level $\alpha$ if
\begin{equation}
	\sup_{\mathcal L \in \mathscr L^{\text{MX}}_0(f^*)} \mathbb E_{\mathcal L}[\phi(\srx, \sry, \srz)] \leq \alpha.
	\label{marginal-validity}
\end{equation}
Recall that the CRT critical value $C(\sry, \srz)$ is defined via conditional calibration~\eqref{upper-quantile}. As is known to those familiar with MX, this implies that any CRT $\phi = \phi_T^{\text{CRT}}$ not only has level $\alpha$ in the sense of definition~\eqref{marginal-validity} but also has level $\alpha$ \textit{conditionally} on $\sry$ and $\srz$:
\begin{equation}
\sup_{\mathcal L \in \mathscr L_0^{\textnormal{MX}}(f^*)}\ \mathbb E_{\mathcal L}[\phi(\srx, \sry, \srz)|\sry, \srz] \leq \alpha \quad \text{almost surely}.
\label{eq:conditional-validity}
\end{equation}

One special property of such conditionally valid tests $\phi$ is that they can be viewed as testing a \textit{point null} rather than the original \textit{composite null} \eqref{conditional-independence}. To see this, we view $\phi \equiv \phi(\srx;\sry,\srz)$ as a \textit{family} of hypothesis tests, indexed by $(\sry, \srz)$, for the distribution $\mathcal L(\srx|\sry,\srz)$. Note that under the MX assumption,
\small
\begin{equation}
\mathcal L \in \mathscr L_0^{\text{MX}}(f^*) \Longrightarrow \mathcal L(\srx = \sfx|\sry = \sfy, \srz = \sfz) = \prod_{i = 1}^n  f^*(\sfx_i | \sfz_i).
\label{point-null}
\end{equation}
\normalsize
In words, fixing $Y,Z$ at their realizations $\sfy,\sfz$ and viewing only $\srx$ as random, $\mathcal L(\srx|\sry = \sfy, \srz = \sfz)$ equals a fixed product distribution for any null $\mathcal L$. This yields a conditional point null hypothesis, with respect to which $\phi^{\CRT}_T(\sfx;\sfy,\sfz)$ is a level-$\alpha$ test for almost every $(\sfy, \sfz)$. Note that the observations $\srx_i$ in this conditional distribution are independent \textit{but not identically distributed} due to the different conditioning events in~\eqref{point-null}.

We emphasize that the aforementioned observations have been under the hood of MX papers, and the existence of a single null distribution from which to resample $\srxk$ is central to the very definition of the CRT. Nevertheless, we find it useful to state explicitly what has thus far been largely left implicit. Indeed, viewing the CRT through the conditional lens~\eqref{eq:conditional-validity} is the starting point that allows us to bring classical theoretical tools to bear on its analysis. We start doing so by considering point alternatives below.

\subsection{The most powerful conditionally valid test} \label{sec:NP}

Viewing the CRT as a test of a point null hypothesis, we can employ the  Neyman-Pearson lemma to find the most powerful CRT (in fact, the most powerful conditionally valid test) against point alternatives. The following theorem states that the likelihood ratio with respect to the (unknown) distribution $\pry|\prx, \prz$ is the most powerful CRT test statistic against a point alternative.

\begin{theorem} \label{prop:crt-optimality}
	Let $\bar{\mathcal L} \in \mathscr L^{\text{MX}}(f^*)$ be an alternative distribution, with $\bar{\mathcal L}(\pry|\prx,\prz) = \bar f_{\pry|\prx,\prz}$. The likelihood of the data $(\srx, \sry, \srz)$ with respect to $\bar{\mathcal L}(\pry|\prx,\prz)$ is
	\begin{equation}
	T^{\textnormal{opt}}(\srx, \sry, \srz) \equiv \prod_{i = 1}^n  \bar f(\sry_i|\srx_i, \srz_i).
	\label{log-likelihood-ratio}
	\end{equation}
	The CRT $\phi^{\CRT}_{T^{\textnormal{opt}}}$ based on this test statistic is the most powerful conditionally valid test of $H_0: \mathcal L \in \mathscr L_0^{\textnormal{MX}}(f^*)$ against $H_1: \mathcal L = \bar{\mathcal L}$, i.e. 
	\begin{equation}
	\mathbb E_{\bar{\mathcal L}}[\phi(\srx,\sry,\srz)] \leq \mathbb E_{\bar{\mathcal L}}[\phi^{\CRT}_{T^\textnormal{opt}}(\srx, \sry, \srz)]
	\label{unconditional}
	\end{equation}
	for any test $\phi$ satisfying the conditional validity property~\eqref{eq:conditional-validity}.
\end{theorem}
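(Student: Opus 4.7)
The plan is to leverage Proposition~\ref{prop:marginal-implies-conditional} to reduce this to a pointwise (in $(\sry,\srz)$) Neyman--Pearson problem, in which both the null and the alternative are fully specified.

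First, I would fix arbitrary realizations $\sry = y$, $\srz = z$ and analyze the conditional testing problem for $\srx$. Under any null $\mathcal L \in \mathscr L_0^{\textnormal{MX}}(f^*)$, equation~\eqref{point-null} says $\srx \mid \sry = y, \srz = z$ has the fixed density $f_0(x \mid y, z) \equiv \prod_{i=1}^n f^*(x_i \mid z_i)$, so the conditional null is a point. Under $\bar{\mathcal L}$, since $\bar{\mathcal L}(\prx \mid \prz) = f^*_{\prx \mid \prz}$, Bayes' rule gives the conditional alternative density
\begin{equation*}
f_1(x \mid y, z) \;=\; \frac{\prod_{i=1}^n f^*(x_i \mid z_i)\, \bar f(y_i \mid x_i, z_i)}{c(y,z)},
\end{equation*}
where $c(y,z)$ is a normalizing constant depending only on $(y,z)$. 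Hence the likelihood ratio $f_1/f_0$ at the observed $\srx$ equals $T^{\textnormal{opt}}(\srx,\sry,\srz)/c(\sry,\srz)$, i.e.\ it is a strictly increasing function of $T^{\textnormal{opt}}$ once $(\sry,\srz)$ are fixed.

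Next, I would apply the classical Neyman--Pearson lemma in this conditional problem. The CRT defined in~\eqref{upper-quantile}--\eqref{eq:randomized-CRT} rejects precisely when $T^{\textnormal{opt}}(\srx,\sry,\srz)$ exceeds its conditional $(1-\alpha)$-quantile under the null resampling distribution, with the randomized decision $\gamma$ on the boundary chosen to make the conditional size \emph{exactly} $\alpha$. Because thresholding $T^{\textnormal{opt}}$ is equivalent to thresholding the likelihood ratio $f_1/f_0$, the CRT $\phi^{\CRT}_{T^{\textnormal{opt}}}$ is (a version of) the Neyman--Pearson test of $f_0$ against $f_1$ at level $\alpha$. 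The lemma then yields, for every level-$\alpha$ test $\phi$ and every $(y,z)$,
\begin{equation*}
\mathbb E_{\bar{\mathcal L}}\!\left[\phi(\srx,\sry,\srz) \,\middle|\, \sry = y, \srz = z\right] \;\le\; \mathbb E_{\bar{\mathcal L}}\!\left[\phi^{\CRT}_{T^{\textnormal{opt}}}(\srx,\sry,\srz) \,\middle|\, \sry = y, \srz = z\right],
\end{equation*}
where we use Proposition~\ref{prop:marginal-implies-conditional} to ensure $\phi$ is indeed conditionally level $\alpha$ at $(y,z)$ (so it is an admissible competitor in the Neyman--Pearson comparison).

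Finally, I would take expectations over $(\sry,\srz)$ under $\bar{\mathcal L}$ via the tower property to obtain~\eqref{unconditional}. The main obstacle, such as it is, lies in the setup rather than in any calculation: one must (i) recognize that $\bar{\mathcal L} \in \mathscr L^{\textnormal{MX}}(f^*)$ makes the $f^*(x_i \mid z_i)$ factors cancel in the likelihood ratio, leaving a statistic that depends on $\srx$ only through $\prod_i \bar f(\sry_i \mid \srx_i, \srz_i)$, and (ii) invoke Proposition~\ref{prop:marginal-implies-conditional} to convert marginal level $\alpha$ into conditional level $\alpha$, which is exactly what is needed for Neyman--Pearson to apply pointwise in $(y,z)$. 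Everything else is routine.
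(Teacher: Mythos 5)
Your proposal is correct and follows essentially the same route as the paper's proof: condition on $(\sry,\srz)$, use Proposition~\ref{prop:marginal-implies-conditional} to reduce any level-$\alpha$ competitor to a conditionally level-$\alpha$ test of a point null against a point alternative, observe that the $f^*(x_i\mid z_i)$ factors cancel so the Neyman--Pearson likelihood ratio is proportional to $T^{\textnormal{opt}}$, and integrate over $(\sry,\srz)$ via the tower property. The only cosmetic difference is that you write the normalizer as a single constant $c(y,z)$ where the paper factors it as $\prod_i \bar f(\sfy_i\mid \sfz_i)$; this changes nothing.
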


We leave open the question of whether $\phi^{\CRT}_{T^\textnormal{opt}}$ is also the most powerful test among not just conditionally valid tests~\eqref{eq:conditional-validity} but also among marginally valid tests~\eqref{marginal-validity}. There do at least exist marginally valid tests that are not conditionally valid.

The proof of Theorem~\ref{prop:crt-optimality} (Appendix~\ref{sec:proofs-sec2}) is based on the reduction in Section~\ref{sec:bridge} of the composite null to a point null by conditioning, followed by the Neyman-Pearson lemma. Note that the likelihood ratio in the model $\mathcal L(\prx|\pry, \prz)$ reduces to the likelihood in the model $\mathcal L(\pry|\prx, \prz)$ up to constant factors; see derivation~\eqref{eq:likelihood-ratio-derivation}. This argument has similar flavor to the theory of unbiased testing (see Lehmann and Romano \cite[Chapter 4]{TSH}), where uniformly most powerful unbiased tests can be found by conditioning on sufficient statistics for nuisance parameters. Our result is also analogous to but different from Lehmann's derivation of the most powerful permutation tests using conditioning followed by the Neyman-Pearson lemma, in randomization-based causal inference (see the rejoinder of Rosenbaum's 2002 discussion paper \cite{Rosenbaum2002}, Section 5.10 of Lehmann (1986), now Lehmann and Romano \cite[Section 5.9]{TSH}).

Inspecting the most powerful test given by Theorem~\ref{prop:crt-optimality}, we find that it depends on $\bar{\mathcal L}$ only through $\bar{\mathcal L}(\pry|\prx,\prz)$. This immediately yields the following corollary.

\begin{corollary} \label{cor:crt-optimality}
	Define the composite class of alternatives
	\begin{equation*}
	\begin{split}
	\mathscr L_1(f^*, \bar f) &= \{\mathcal L \in \mathscr L_0^{\textnormal{MX}}(f^*): \bar{\mathcal L}(\pry|\prx,\prz) = \bar f_{\pry|\prx,\prz}\} \\
	&= \{\mathcal L: \mathcal L(\prx,\pry,\prz) = f_{\prz} \cdot f^*_{\prx|\prz} \cdot \bar f_{\pry|\prx,\prz} \textnormal{ for some } f_{\prz}\}.
	\end{split}
	\end{equation*}
	Among the set of conditionally valid tests~\eqref{eq:conditional-validity}, the test $\varphi^{\CRT}_{T^{\textnormal{opt}}}$ is uniformly most powerful against $\mathscr L_1(f^*, \bar f)$.
\end{corollary}

Theorem~\ref{prop:crt-optimality} and Corollary~\ref{cor:crt-optimality} imply that the most powerful CRT against a point alternative is based on the test statistic defined as the measuring how well the data $(\srx, \sry, \srz)$ fit the distribution $\bar{\mathcal L}(\pry|\prx,\prz)$. For example, if
\begin{equation}
\bar f(\pry|\prx,\prz) = N(\prx^T\beta + \prz^T \gamma, \sigma^2) \text{ for coefficients } \beta \in \mathbb R^d \text{ and } \gamma \in \mathbb R^p,
\label{linear-model}
\end{equation}
then the optimal test rejects for small values of $\|\sry - \srx \beta - \srz \gamma \|^2$. In Section~\ref{sec:knockoffs}, we establish an analogous optimality statement for MX knockoffs as well. Since the optimal test depends on the alternative distribution $\bar{\mathcal L}(\pry|\prx,\prz)$, CRT and MX knockoffs implementations usually employ a machine learning step to search through the composite alternative (not unlike a likelihood ratio test) for a good approximation $\widehat f_{\pry|\prx,\prz}$. These approximate models are then summarized in various ways to define a test statistic $T$. There is no consensus yet on the best test statistic to use, with some authors \cite{CetL16, SetC17, SetS19} using combinations of fitted coefficients $\widehat \beta$ and others  \cite{Tansey2018, Bates2020} using likelihood-based test statistics. The above optimality results align more closely with the latter strategy. Theorem~\ref{prop:crt-optimality} has inspired an extension of the CRT to the sequential setting using a likelihood-based test statistic, accompanied by a similar optimality result~\cite{Grunwald2022}. Likelihood-based test statistics also have the advantage of avoiding ad hoc combination rules for $\widehat \beta \in \mathbb R^d$ when $d > 1$. It remains to be seen whether likelihood-based or coefficient-based test statistics yield greater power in practice, but a thorough empirical comparison is beyond the scope of this work. For now, it suffices to note that, despite its simplicity, this is the first such power optimality result in the CRT literature.

\paragraph*{}
Intuitively, the results of this section suggest that the more successful $\widehat f_{\pry|\prx,\prz}$ is at approximating the true alternative $f_{\pry|\prx,\prz}$, the more powerful the corresponding CRT will be. We make this relationship precise in an asymptotic setting in Section~\ref{sec:asymptotic-power}. We prepare for these results in the next section by exploring an easier-to-analyze asymptotic equivalent to the CRT.

\section{An asymptotic equivalent to the distilled CRT} \label{sec:weakening}

In Section~\ref{sec:power}, we saw how to construct the optimal test against point alternatives specified by $\bar f_{\pry|\prx,\prz}$. In practice, of course we do not have access to this distribution, so we usually estimate it via a statistical machine learning procedure. The goal of this section and the next is to quantitatively assess the power of the CRT as a function of the prediction error of this machine learning procedure. Specifically, we consider  the power of a specific instance of the CRT (the \textit{distilled CRT (dCRT)} \cite{Liu2020})  against a set of semiparametric alternatives (Section~\ref{sec:asymptotic-setup}). We prepare to assess the power of this test by showing its asymptotic equivalence to the simpler-to-analyze \textit{MX(2) $F$-test} (Section~\ref{sec:second-order-approx}), which is of independent interest due to its closed form and weaker assumptions (Section~\ref{sec:mx2-f-test}). We examine the finite-sample Type-I error control of the MX(2) $F$-test in numerical simulations (Section~\ref{sec:simulations}) and put this section's results into perspective (Section~\ref{sec:comparison-to-existing-results-3}) before moving on to stating the desired power results in the next section (Section~\ref{sec:asymptotic-power}).

\subsection{Semiparametric alternatives and the distilled CRT} \label{sec:asymptotic-setup}

First, we define an asymptotic framework within which we will work in Sections~\ref{sec:weakening} and~\ref{sec:asymptotic-power}. Following a triangular array formalization, for each $n = 1, 2, \dots$, we have a joint law $\mathcal L_n$ over $(\prx, \pry, \prz) \in \mathbb R^{d + r + p}$, where $d = \text{dim}(\prx)$ remains fixed, $r = \text{dim}(\pry) = 1$, and $p = \text{dim}(\prz)$ can vary arbitrarily with $n$. For each $n$, we receive $n$ i.i.d. samples $(\srx, \sry, \srz) = \{(\srx_i, \sry_i, \srz_i)\}_{i = 1}^n$ from $\mathcal L_n$. Note that we leave implicit the dependence on $n$ of $(\prx,\pry,\prz)$ and $(X,Y,Z)$ to lighten the notation. In this framework, it will be useful to define the mean and variance functions
\begin{equation}
	\mu_n(\prz) \equiv \mathbb E_{\mathcal L_n}[\prx|\prz]  \text{ and } \Sigma_n(\prz) \equiv \text{Var}_{\mathcal L_n}[\prx|\prz].
	\label{eq:conditional-mean-variance}
\end{equation}

Now, consider a set of semiparametric (partially linear) alternatives $\mathcal L_n(\pry|\prx,\prz)$ such that
\begin{equation}
	\pry = \prx^T \beta_n + g_n(\prz) + \peps; \quad \peps \sim N(0, \sigma^2 ),\ \sigma^2 > 0
	\label{eq:linearity}
\end{equation}
for $\peps \independent (\prx, \pry, \prz)$. Here, $\beta_n \in \mathbb R^d$ is a coefficient vector, $g_n: \mathbb R^p \rightarrow \mathbb R$ a general function, and $\sigma^2 > 0$ the residual variance. Of special interest are local alternatives where $\beta_n = h/\sqrt{n}$ for some $h \in \mathbb R^d$. We emphasize that---in this section and throughout the paper---we use the partially linear model~\eqref{eq:linearity} exclusively as an alternative distribution against which to assess power, rather than an additional assumption required for Type-I error control. By Theorem~\ref{prop:crt-optimality}, the most powerful test against the alternative~\eqref{eq:linearity} is the CRT based on the likelihood statistic 
\begin{equation}
\begin{split}
T_n^{\text{opt}}(\srx, \sry, \srz) &= \prod_{i = 1}^n\frac{1}{(2\pi)^{1/2}}\exp\left(-\frac{1}{2\sigma^2}\left(\sry_i - \srx_i^T \beta_n - g_n(\srz_i)\right)^2\right) \\
&=\prod_{i = 1}^n\frac{1}{(2\pi)^{1/2}}\exp\left(-\frac{1}{2\sigma^2}\left(\sry_i - (\srx_i-\mu_n(\srz_i))^T \beta_n - g_n'(\srz_i)\right)^2\right),
\label{eq:likelihood-ratio}
\end{split}
\end{equation}
where
\begin{equation}
\bar g_n(\prz) \equiv \mathbb E[\pry|\prz] =  \mu_n(\prz)^T\beta_n  + g_n(\prz).
\label{eq:g-n-prime-def}
\end{equation}
Assuming local alternatives $\beta_n = h/\sqrt n$ and taking a logarithm, we obtain
\begin{equation}
	\begin{split}
		\log T_n^{\text{opt}}(\srx, \sry, \srz) &= -\frac n 2 \log(2\pi) - \frac{1}{2\sigma^2}\sum_{i = 1}^n\left(\sry_i - (\srx_i-\mu_n(\srz_i))^T h/\sqrt{n} - \bar g_n(\srz_i)\right)^2 \\
		&\approx   \frac{h^T}{\sigma^2}\frac{1}{\sqrt{n}}\sum_{i = 1}^n(\sry_i - \bar g_n(\srz_i))(\srx_i-\mu_n(\srz_i)) + C,
	\end{split}
\label{eq:optimal-semiparametric}
\end{equation}
where $C$ is a constant that does not depend on $\srx$ and therefore does not change upon resampling. 

Of course, inference based on $T_n^{\text{opt}}$ is infeasible because the function $\bar g_n$ is unknown in practice. Suppose we have learned an estimate $\widehat g_n$ of this function, possibly in-sample. Then, the derivation~\eqref{eq:optimal-semiparametric} motivates us to base inference on the sample covariance between $\prx$ and $\pry$ after adjusting for $\prz$:
\begin{equation}
	\widehat \rho_n(\srx, \sry, \srz) \equiv \frac1n\sum_{i = 1}^n (\sry_{i} - \widehat g_n(\srz_{i}))(\srx_{i} - \mu_n(\srz_i)).
	\label{rho-hat}
\end{equation}
Consider first the case $d = 1$. The CRT rejecting for large values of $|\widehat \rho_n|$ is an instance of the dCRT \cite{Liu2020}. The idea of the dCRT (Algorithm~\ref{alg:dCRT}) is to \textit{distill}---usually via a machine learning regression method---the information from the high-dimensional $\srz \in \mathbb R^{p \times n}$ about $\srx$ and $\sry$ into a low-dimensional summary $D \in \mathbb R^{q \times n}$,  where $q \ll p$. This is accomplished using a \textit{distillation function} $d: (\sry, \srz) \mapsto D$. Then, the CRT is applied using a test statistic of the form $T_n(\srx, \sry, \srz) \equiv T_n^d(\srx, \sry, D) = T_n^d(\srx, \sry, d(\sry, \srz))$. For example, the CRT based on the statistic $\widehat \rho_n$~\eqref{rho-hat} can be expressed as the dCRT with distillation function $d_i(\sry, \srz) = (\widehat g_n(\srz_i), \mu_n(Z_i))$, where $\widehat g_n$ is learned in-sample on $(\sry, \srz)$. 

\begin{center}
	\begin{minipage}{\linewidth}
		\begin{algorithm}[H]
			\nonl  \textbf{Input:} $\{(\srx_i, \sry_i, \srz_i)\}_{i=1}^n$, distribution  $f^*_{\prx|\prz}$, distillation function $d$, test statistic $T_n^d$, number of resamples $B$\\
			Distill information in $\srz$ about $\srx$ and $\sry$ into $D \equiv d(\sry, \srz)$\;
			\For{$b = 1, 2, \dots, B$}{
				Resample $\srxk_i^{(b)} \overset{\text{ind}}\sim f^*_{\prx|\prz = \srz_i},\ i = 1, \dots, n$\;
			}
			Compute $\widehat p \equiv \frac{1}{B+1}\sum_{b = 1}^B \mathbbm 1(T_n^d(\srxk^{(b)}, \sry, D) \geq T_n^d(\srx, \sry, D))$.\\
			\nonl \textbf{Output:} dCRT $p$-value $\widehat p$.\\
			\nonl \textbf{Computational cost:} One $p$-dimensional model fit, and drawing $B$ resamples.
			\caption{\bf The distilled conditional randomization test (dCRT)}
			\label{alg:dCRT}
		\end{algorithm}
	\end{minipage}
\end{center}

The dCRT was proposed for its computational speed: The computationally expensive distillation step is a function only of $(\sry, \srz)$, so it need not be refit upon resampling $\widetilde X$. By contrast, the originally proposed instance of the CRT \cite{CetL16} involved learning $\widehat f_{\pry|\prx, \prz}$ on the entire sample $(\srx, \sry, \srz)$, and therefore the learning procedure needed to be re-applied to each resampled dataset $(\srxk^{(b)}, \sry, \srz)$. The derivations~\eqref{eq:likelihood-ratio} and~\eqref{eq:optimal-semiparametric} suggest that the dCRT is not only computationally fast, but also a natural test to consider for power against semiparametric alternatives~\eqref{eq:linearity}. We therefore focus on this class of tests.

In preparation to study the power of the dCRT, we extend it to $d > 1$ and propose an asymptotically equivalent test that is easier to analyze.


\subsection{A second-order approximation to the dCRT} \label{sec:second-order-approx}

Let us consider first the special case
\begin{equation}
\mathcal L_n(\prx | \prz) = N(\mu_n(\prz), \Sigma_n(\prz)).
\label{eq:normal-conditional}
\end{equation}
In this case, the resampling distribution of $\widehat \rho_n$ can be computed in closed form \cite{Liu2020}:
\begin{equation}
\mathcal L_n(\sqrt n \cdot \widehat \rho_n(\srxk, \sry, \srz) \mid \srx, \sry, \srz) = N(0, \widehat S_n^2),
\label{eq:normal-resampling-distribution}
\end{equation}
where
\begin{equation}
	\widehat S_n^2 \equiv \frac{1}{n}\sum_{i = 1}^n (\sry_{i} - \widehat g_n(\srz_{i}))^2\Sigma_n(\srz_i).
	\label{eq:s-n-hat}
\end{equation}
When $d = 1$, the dCRT based on the statistic $T_n(\srx, \sry, \srz) = |\sqrt n \cdot \widehat \rho_n(\srx, \sry, \srz)|$ (and infinitely many resamples $B$) therefore rejects when $T_n(\srx, \sry, \srz) > \widehat S_n \cdot z_{1-\alpha/2}$, requiring no resampling. To extend this to $d > 1$, consider the standardized quantity
\begin{equation}
U_n(\srx, \sry, \srz) \equiv \widehat S_n^{-1} \sqrt{n} \widehat \rho_n = \frac{\widehat S_n^{-1}}{\sqrt{n}}\sum_{i = 1}^n (\sry_{i} - \widehat g_n(\srz_{i}))(\srx_{i} - \mu_n(\srz_i)) \in \mathbb R^d.
\label{u-hat}
\end{equation}
It is natural to use as a test statistic the squared norm of $U_n$:
\begin{equation}
	T_n(\srx, \sry, \srz) \equiv \|U_n(\srx, \sry, \srz)\|^2.
	\label{t-n}
\end{equation}
Then, the normal resampling distribution~\eqref{eq:normal-resampling-distribution} implies that
\begin{equation}
\mathcal L_n(T_n(\srxk, \sry, \srz)|\srx, \sry, \srz) = \chi^2_d.
\end{equation}
It follows that the dCRT based on test statistic $T_n(\srx, \sry, \srz)$ yields the test
\begin{equation}
\phi^{N(\mu_n, \Sigma_n)}_n(\srx, \sry, \srz) \equiv \mathbbm 1(T_n(\srx, \sry, \srz) > c_{d,1-\alpha}),
\label{eq:crt-closed-form}
\end{equation}
where we recall that $c_{d,1-\alpha}$ is defined as the $1-\alpha$ quantile of $\chi^2_d$. Note that all tests $\phi$ in Sections~\ref{sec:weakening} and~\ref{sec:asymptotic-power} will be (d)CRTs based on the test statistic $T_n$~\eqref{t-n}. To ease notation, we therefore omit the subscript $T_n$ and the superscript ``CRT'' from the notation introduced in equation~\eqref{eq:randomized-CRT}, replacing these with $n$ and the distribution of $\prx|\prz$ with respect to which resampling is done, respectively. For example, the superscript in the test defined in equation~\eqref{eq:crt-closed-form} is based on the resampling distribution $\prx|\prz \sim N(\mu_n(\prz), \Sigma_n(\prz))$.

If the conditional distribution $\mathcal L_n(\prx|\prz)$ is not Gaussian, then the dCRT $\phi_n^{\mathcal L_n}$ based on $T_n(\srx, \sry, \srz)$ will not reduce to the closed-form expression~\eqref{eq:crt-closed-form}. However, we can think of the test $\phi^{N(\mu_n, \Sigma_n)}_n$ as a kind of second-order approximation for $\phi_n^{\mathcal L_n}$ as long as $\mathcal L_n(\prx|\prz)$ has first and second moments given by $\mu_n(\prz)$ and $\Sigma_n(\prz)$, respectively. Indeed, it is easy to check that the resampling distribution $\mathcal L_n(\sqrt n \cdot \widehat \rho_n(\srxk, \sry, \srz) \mid \srx, \sry, \srz)$ matches that derived in the normal case~\eqref{eq:normal-resampling-distribution} up to two moments. Under a few assumptions, we can make this intuition precise by showing that $\phi_n^{\mathcal L_n}$ is asymptotically equivalent to $\phi^{N(\mu_n, \Sigma_n)}_n$ (Theorem~\ref{thm:equivalence} below). We require the distribution $\mathcal L_n$ to satisfy the following moment conditions \footnote{The exponents in these moment conditions can be relaxed from 4 to $2+\delta$, in particular, requiring an appropriate triangular array weak law of large numbers with 1+$\delta$ moments. This slight weakening of moment conditions requires significantly more technical effort, so is omitted for simplicity since it does not alter the main takeaway messages of our analysis.}for fixed $c_1, c_2 > 0$:
\small
\begin{equation}
\mathcal L_n \in \mathscr L_n(c_1, c_2) \equiv \{\mathcal L_n: \|S_{n}^{-1}\| \leq c_1, \mathbb E_{\mathcal L_n}\left[(\pry - \widehat g_n(\prz))^{4} \mathbb E_{\mathcal L_n}[\|\prx - \mu_n(\prz)\|^{4}|\prz]\right] \leq c_2 \},
\label{eq:moment-conditions}
\end{equation} 
\normalsize
where
\begin{equation}
S_n^2 \equiv \mathbb E[\widehat S_n^2] = \mathbb E_{\mathcal L_n}\left[(\pry - \widehat g_n(\prz))^2 \Sigma_n(\prz)\right].
\end{equation}
Furthermore, to avoid technical complications, we assume that the estimate $\widehat g_n$ is trained on an independent dataset (whose size can vary arbitrarily with $n$ and is not included in the sample size $n$ used for testing). For example, there has been recent interest in combining observational and experimental (randomized) data; typically, the former is much more abundant than the latter. We can think of $\widehat g_n$ being trained on the former, and then used for MX inference on the latter \cite{Bates2020}.
These training sets across $n$ and resulting estimates $\widehat g_n$ remain fixed throughout.

\begin{theorem} \label{thm:equivalence}
Suppose that for each $n$, $\mathcal L_n$ is a law whose first and second conditional moments are given by $\mu_n(\prz)$ and $\Sigma_n(\prz)$~\eqref{eq:conditional-mean-variance}, which satisfies the moment conditions~\eqref{eq:moment-conditions} for fixed for some $c_1, c_2 > 0$. Let $\phi_n^{\mathcal L_n}$ be the dCRT based on the test statistic $T_n(\srx, \sry, \srz)$~\eqref{eq:s-n-hat}, \eqref{u-hat}, \eqref{t-n}, with $\widehat g_n$ trained out of sample. The threshold $C_n(\sry, \srz)$ of this test~\eqref{upper-quantile} converges in probability to the $\chi^2_d$ quantile:
	\begin{equation}
		C_n(Y,Z) \overset{\mathcal L_n}\rightarrow_p c_{d,1-\alpha}.
		\label{eq:threshold-convergence}
	\end{equation}
	Furthermore, if $T_n(\srx, \sry, \srz)$ does not accumulate near $c_{d,1-\alpha}$, i.e.
	\begin{equation}
		\lim_{\delta \rightarrow 0}\limsup_{n \rightarrow \infty}\ \mathbb P_{\mathcal L_n}[|T_n(\srx, \sry, \srz)-c_{d,1-\alpha}| \leq \delta] = 0,
		\label{eq:non-accumulation}
	\end{equation}
	then the dCRT $\phi_n^{\mathcal L_n}$ is asymptotically equivalent to its second order approximation $\phi^{N(\mu_n, \Sigma_n)}_n$~\eqref{eq:crt-closed-form}:
	\begin{equation}
		\lim_{n \rightarrow \infty}\ \mathbb P_{\mathcal L_n}[\phi^{\mathcal L_n}_n(\srx, \sry, \srz) \neq \phi^{N(\mu_n, \Sigma_n)}_n(\srx, \sry, \srz)] = 0.
		\label{eq:asymptotic-equivalence}
	\end{equation}	
\end{theorem}

Informally, this theorem (proved in Appendix~\ref{sec:proofs-sec45}) suggests that the CRT resampling distribution of $T_n(\srx, \sry, \srz)$ converges to $\chi^2_d$, which is the resampling distribution of this test statistic under a normal $\mathcal L_n(\prx|\prz)$. Note that the resulting equivalence~\eqref{eq:asymptotic-equivalence} holds for the specific instance of the CRT based on the statistic $T_n$ defined in via equations~\eqref{u-hat} and~\eqref{t-n}, though other kinds of test statistics may lead to similar large-sample behavior. While Theorem~\ref{thm:equivalence} is stated for $\widehat g_n$ trained out of sample, we conjecture that it continues to hold when $\widehat g_n$ is fit in sample, as in the original dCRT construction \cite{Liu2020}. At least, we observe that the conditioning in the construction of the resampling distribution $\mathcal L_n(\sqrt n \cdot \widehat \rho_n(\srxk, \sry, \srz) \mid \srx, \sry, \srz)$ ensures that its mean and variance remain equal to $0$ and $\widehat S_n^2$ even when $\widehat g_n$ is fit in sample.

Theorem~\ref{thm:equivalence} has several consequences. First, it allows us to study the power of the dCRT $\phi_n^{\mathcal L_n}$ against semiparametric alternatives~\eqref{eq:linearity} by studying instead the simpler test $\phi_n^{N(\mu_n, \Sigma_n)}$. We pursue this direction in Section~\ref{sec:asymptotic-power}. Second, it implies a certain robustness property of the dCRT. Indeed, suppose we run the dCRT based on an incorrect law $\mathcal L'_n \neq \mathcal L_n$, but whose first and second moments match that of $\mathcal L_n$ and such that $\mathcal L_n$ is contiguous with respect to $\mathcal L'_n$. Then, applying Theorem~\ref{thm:equivalence} to $\mathcal L_n$ and $\mathcal L'_n$ implies that $\mathbb P_{\mathcal L_n}[\phi^{\mathcal L'_n}_n(\srx, \sry, \srz) \neq \phi^{\mathcal L_n}_n(\srx, \sry, \srz)] \rightarrow 0$. It follows that since $\phi^{\mathcal L_n}_n$ controls the type-I error asymptotically (in fact, also in finite samples), then so does $\phi^{\mathcal L'_n}_n$. We omit the formal statement of this result for the sake of brevity. Third, it suggests a distinct conditional independence test with valid Type-I error control under the weaker assumption that only the first two moments of $\mathcal L_n(\prx|\prz)$ are known. We expand on this third consequence next.

\subsection{The MX(2) assumption and the MX(2) $F$-test} \label{sec:mx2-f-test}

The asymptotic equivalence of $\phi_n^{N(\mu_n, \Sigma_n)}$ to $\phi_n^{\mathcal L_n}$ stated in Theorem~\ref{thm:equivalence} suggests that we may replace the dCRT based on the law $\mathcal L_n(\prx|\prz)$ with that based on its normal approximation $N(\mu_n(\prz), \Sigma_n(\prz))$ while preserving Type-I error control. Since the test $\phi_n^{N(\mu_n, \Sigma_n)}$ requires knowledge only of the first two moments $\mu_n(\prx)$ and $\Sigma_n(\prz)$, this means that we may control Type-I error without the full MX assumption. To formalize this, let us define the 
\begin{equation}
	\begin{split}
&\textit{MX(2) assumption: } \text{the conditional mean } \mu_n(\prz) \equiv \mathbb E_{\mathcal L_n}[\prx|\prz] \\ 
&\text{ and conditional variance } \Sigma_n(\prz) \equiv \text{Var}_{\mathcal L_n}[\prx|\prz] \text{ are known}.
	\label{eq:mx-2-assumption}
	\end{split}
\end{equation}
By analogy with definition~\eqref{eq:null-under-modelX}, the MX(2) null hypothesis is defined as 
\begin{equation}
	\mathscr L_0^{\text{MX(2)}} = \mathscr L_0^{\text{MX(2)}}(\mu_n(\cdot), \Sigma_n(\cdot)) \equiv \mathscr L_{0} \cap \mathscr L^{\text{MX(2)}}(\mu_n(\cdot), \Sigma_n(\cdot)),
	\label{mx2-null}
\end{equation}
where
\begin{equation*}
	\begin{split}
		\mathscr L^{\textnormal{MX(2)}}(\mu_n(\cdot), \Sigma_n(\cdot)) \equiv \{\mathcal L_n: \mathbb E_{\mathcal L_n}[\prx|\prz] = \mu_n(\prz),\ \text{Var}_{\mathcal L_n}[\prx|\prz] = \Sigma_n(\prz)\}.
	\end{split}
\end{equation*}

Under the MX(2) assumption, the CRT is undefined because there is no conditional distribution $\mathcal L_n(\prx|\prz)$ to resample from. Nevertheless, we may define the \textit{MX(2) $F$-test} by running the resampling-free dCRT as though $\mathcal L_n(\prx|\prz)$ were normal, with the given first and second moments (Algorithm~\ref{alg:MX(2)-F-test}). We denote this test $\phi_n^{N(\mu_n, \Sigma_n)}$, as before.
\begin{center}
	\begin{minipage}{\linewidth}
		\begin{algorithm}[H]
			\KwData{
				$\{(\srx_i, \sry_i, \srz_i)\}_{i=1}^n$, $\mu_n(\cdot)$ and $\Sigma_n(\cdot)$ in \eqref{eq:conditional-mean-variance}, learning method $g$
			}
			Obtain $\widehat g_n$ by fitting $g$ out of sample\;
			Recall $\mu_n(\srz_i) \equiv \mathbb E_{\mathcal L_n}[\srx_i|\srz_i], \text{ set } \widehat S_n^2 \equiv \frac{1}{n}\sum_{i = 1}^n (\sry_{i} - \widehat g_n(\srz_{i}))^2\Sigma_n(\srz_i)$\;
			Set $\smash{U_n \equiv \frac{\widehat S_n^{-1}}{\sqrt{n}}\sum_{i = 1}^n (\sry_{i} - \widehat g_n(\srz_{i}))(\srx_{i} - \mu_n(\srz_i))}$ and $T_n = \|U_n\|^2$\;
			Compute $\widehat p^{\text{MX(2)}} \equiv \mathbb P[\chi^2_d > T_n]$.\\
			\nonl \textbf{Output:} MX(2) $F$-test asymptotic $p$-value $\widehat p^{\text{MX(2)}}$.\\
			\nonl \textbf{Computational cost:} One $p$-dimensional model fit.
			\caption{\bf The MX(2) $\bm F$-test}
			\label{alg:MX(2)-F-test}
		\end{algorithm}
	\end{minipage}
\end{center}
Note that a one-sided version of this test (the \textit{MX(2) $t$-test}) can be defined for $d = 1$ by rejecting for large values of $U_n(\srx, \sry, \srz)$. 

The MX(2) $F$-test controls the Type-I error under the MX(2) assumption, if the moment conditions~\eqref{eq:moment-conditions} hold and $\widehat g_n$ is fit out of sample.
\begin{theorem} \label{thm:asymptotic-alpha-level}
If $\mathcal L_n \in \mathscr L_0^{\textnormal{MX(2)}}\cap \mathscr L_n(c_1, c_2)$ for some $c_1, c_2 > 0$ and $\widehat g_n$ is fit out of sample, then the standardized quantity $U_n(\srx, \sry, \srz)$ converges to the standard normal: 
\begin{equation}
	U_n(\srx, \sry, \srz) \overset{\mathcal L_n}\rightarrow_d N(0, I_d).
	\label{eq:asymptotic-normality}
\end{equation}
Therefore, the MX(2) $F$-test controls Type-I error asympotically, uniformly over the above subset of $\mathscr L_0^{\textnormal{MX(2)}}$: 
\begin{equation}
	\limsup_{n \rightarrow \infty} \sup_{\mathcal L_n \in \mathscr L_0^{\textnormal{MX(2)}}\cap \mathscr L_n(c_1, c_2)} \mathbb E_{\mathcal L_n}[\phi^{N(\mu_n, \Sigma_n)}_n(\srx, \sry, \srz)] \leq \alpha.
	\label{asymptotic-alpha-level}
\end{equation}
\end{theorem}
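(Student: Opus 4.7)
The plan is to prove the asymptotic normality claim~\eqref{eq:asymptotic-normality} first and then obtain the uniform size bound~\eqref{asymptotic-alpha-level} via the continuous mapping theorem (applied to $\|\cdot\|^2$) together with continuity of the $\chi^2_d$ CDF at $c_{d, 1-\alpha}$. I would write
\[
U_n \;=\; \widehat S_n^{-1} \cdot \frac{1}{\sqrt{n}} \sum_{i=1}^n W_i, \qquad W_i \equiv (\sry_i - \widehat g_n(\srz_i))(\srx_i - \mu_n(\srz_i)),
\]
and split the problem into (a) a uniform central limit theorem for $S_n^{-1} \cdot n^{-1/2} \sum_i W_i$, (b) a uniform consistency statement $\widehat S_n^{-1} - S_n^{-1} \to 0$ in probability, and (c) a Slutsky combination.

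Conditioning on the separate training sample makes $\widehat g_n$ deterministic, so the $W_i$ are i.i.d.\ under $\mathcal L_n$. Under any $\mathcal L_n \in \mathscr L_0^{\textnormal{MX(2)}}$, conditional independence $\pry \independent \prx \mid \prz$ paired with the MX(2) identity $\mathbb E_{\mathcal L_n}[\prx \mid \prz] = \mu_n(\prz)$ yields $\mathbb E[W_i \mid \sry_i, \srz_i] = (\sry_i - \widehat g_n(\srz_i)) \cdot \mathbb E[\srx_i - \mu_n(\srz_i) \mid \srz_i] = 0$, and the calculation~\eqref{variance-calculation} already identifies $\text{Var}[W_i] = S_n^2$. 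Hence $\text{Var}[S_n^{-1} W_i] = I_d$.

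For step (a), I would invoke a multivariate Berry--Esseen bound (equivalently, the Cram\'er--Wold device combined with the univariate Berry--Esseen) which controls the Kolmogorov distance of $S_n^{-1} \cdot n^{-1/2}\sum_i W_i$ to $N(0, I_d)$ by $O(n^{-1/2})$ times a constant depending on $\mathbb E \|S_n^{-1} W_i\|^3$. Using $\|S_n^{-1}\| \leq c_1$ together with conditional independence under the null and the moment condition built into $\mathscr L_n(c_1, c_2)$,
\[
\mathbb E \|W_i\|^4 \;=\; \mathbb E\!\left[(\pry - \widehat g_n(\prz))^4 \, \mathbb E[\|\prx - \mu_n(\prz)\|^4 \mid \prz]\right] \;\leq\; c_2,
\]
so $\mathbb E \|S_n^{-1} W_i\|^3 \leq c_1^3 c_2^{3/4}$ by Jensen, and the resulting Berry--Esseen bound vanishes \emph{uniformly} over the allowed class of $\mathcal L_n$. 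For step (b), $\widehat S_n^2$ is a sample mean of i.i.d.\ $d \times d$ random matrices with mean $S_n^2$, and Chebyshev applied entrywise---after bounding $\|\Sigma_n(\prz)\|_F^2 \lesssim d^2 \mathbb E[\|\prx - \mu_n(\prz)\|^4 \mid \prz]$ via Cauchy--Schwarz---gives $\|\widehat S_n^2 - S_n^2\| \to 0$ in probability, uniformly. Because $\|S_n^{-1}\|$ is bounded by $c_1$, matrix inversion is Lipschitz on a neighborhood of $S_n^2$, so $\widehat S_n^{-1} - S_n^{-1} \to 0$ in probability uniformly as well. Slutsky then closes out~\eqref{eq:asymptotic-normality}, and~\eqref{asymptotic-alpha-level} follows from uniform weak convergence plus continuity of the $\chi^2_d$ distribution function. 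The principal obstacle is the \emph{uniformity} requirement: Lyapunov's theorem alone would give only pointwise convergence, so the argument must pass through a quantitative CLT whose remainder depends on $\mathcal L_n$ only through $c_1$, $c_2$, and $d$.
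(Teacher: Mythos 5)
Your proof is correct and follows the same skeleton as the paper's: the same decomposition $U_n = (\widehat S_n^{-1} S_n)\bigl(S_n^{-1} n^{-1/2}\sum_i W_i\bigr)$, the same use of conditional independence plus the known conditional mean to get $\mathbb E[W_i \mid \sry_i,\srz_i]=0$ and $\mathrm{Var}[W_i]=S_n^2$ under the null, the same third-moment bound $\mathbb E\|S_n^{-1}W_i\|^3 \le c_1^3 c_2^{3/4}$, consistency of $\widehat S_n$ via a law of large numbers and local Lipschitzness of inversion (controlled by $c_1$), Slutsky, and finally continuous mapping plus continuity of the $\chi^2_d$ law. The one genuine difference is how uniformity is obtained. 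You route it through a quantitative multivariate CLT whose remainder depends on $\mathcal L_n$ only through $c_1,c_2,d$; this works and even yields a rate, though note that Cram\'er--Wold plus the univariate Berry--Esseen bound controls linear projections and does not by itself give a uniform bound over the rejection region $\{\|u\|^2>c_{d,1-\alpha}\}$---for a truly quantitative multivariate statement you would want a bound over convex sets (e.g.\ Bentkus's). The paper instead observes that the uniform claim \eqref{asymptotic-alpha-level} is equivalent to showing $\limsup_n \mathbb E_{\mathcal L_n}[\phi^{\textnormal{MX(2)}}_n]\le\alpha$ for \emph{every} sequence $\mathcal L_n$ drawn from the class (were the supremum not to converge, one could extract a violating sequence), and then applies the Lyapunov CLT for triangular arrays to that arbitrary sequence, with the Lyapunov ratio bounded by the same $c_1^3 c_2^{3/4}$. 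So your closing remark that ``Lyapunov's theorem alone would give only pointwise convergence'' is not quite right: the triangular-array Lyapunov CLT combined with the arbitrary-sequence reduction delivers exactly the required uniformity, with no quantitative CLT needed. Your Berry--Esseen route buys an explicit rate; the paper's route is lighter on machinery.
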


See Appendix~\ref{sec:proofs-sec45} for a proof of this theorem. The moment assumptions can be relaxed for pointwise error control (less desirable), but are unavoidable for uniform type-I error control as stated in the corollary. More importantly, we conjecture that the MX(2) $F$-test continues to have asymptotic Type-I error control even if $\widehat g_n$ is fit in sample. One may expect this because the validity of the MX(2) $F$-test derives from the correctness of $(\mu_n, \Sigma_n)$ rather than that of $\widehat g_n$. This conjecture is supported by the results of a simulation study presented in Appendix~\ref{sec:simulations}.

\subsection{Comparison to existing results} \label{sec:comparison-to-existing-results-3}

\paragraph{Comparison to model-X literature.}

The preceding results suggest that the MX(2) $F$-test is a useful alternative to the dCRT: the power of these methods is asymptotically the same (Theorem~\ref{thm:equivalence}), while the MX(2) $F$-test is computationally faster because it does not require resampling (Table~\ref{tab:summary-1}). On the other hand, note that we have proven Type-I error control for the MX(2) $F$-test only when $\widehat g_n$ is fit out of sample and only asymptotically, while the dCRT gives finite-sample Type-I error control with in-sample fit $\widehat g_n$ (albeit under the stronger model-X assumption). However, numerical simulations suggest good finite-sample Type-I error control for the MX(2) $F$-test even when $\widehat g_n$ is fit in sample. Furthermore, Theorem~\ref{thm:asymptotic-alpha-level} shows that asymptotic Type-I error control of MX-style methodologies can be achieved under the weaker MX(2) assumption~\eqref{eq:mx-2-assumption}, requiring only two moments of the conditional distribution $\mathcal L_n(\prx|\prz)$ rather than the entire conditional distribution (Table~\ref{tab:summary-2}). If strict Type-I error control in finite samples is desired, however, then we must continue to rely on the full MX assumption. 
Finally, note that the MX(2) assumption still requires \textit{exact} knowledge of the first and second conditional moments; we leave as an important future direction to examine the robustness of these tests to errors in these quantities. First steps in this direction have been taken recently \cite{Berrett2019, Li2022a}.



\paragraph{Comparison to doubly-robust literature.}

The semiparametric model~\eqref{eq:linearity} has been extensively studied (see e.g. the classic works \cite{Robinson1988, Robins1992}), in which context estimation of the parameter $\beta_n$ is well understood. By contrast, we do not assume the validity of the semiparametric model, using it only as an alternative against which to evaluate power. A related and perhaps more relevant line of work is non-parametric doubly robust testing \cite{Shah2018, Dukes2020}  and estimation \cite{VanderLaan2011, Chernozhukov2018}. Here, the inferential target is some functional of the data-generating distribution. The most relevant such functional is the expected conditional covariance $\rho_n \equiv \mathbb E_{\mathcal L_n}[\text{Cov}_{\mathcal L_n}[\prx, \pry|\prz]]$. Note that a valid test of the null hypothesis $H_0: \rho_n = 0$ is also a valid test of the conditional independence hypothesis $H_0: \prx \independent \pry \mid \prz$, since conditional independence implies that $\rho_n = 0$ (though the converse is not true in general). The quantity $\widehat \rho_n$ turns out to be a consistent estimator of $\rho_n$ under the MX(2) assumption (Lemma~\ref{lem:consistency}). Such product-of-residuals estimators are also commonly employed in the semi- and non-parametric literatures \cite{Robinson1988, Robins1992, Li2011}.

To compare our results with those in non-parametric doubly robust inference, we consider the closest representative of the latter: the generalized covariance measure (GCM) test of Shah and Peters \cite{Shah2018}. For $d = 1$, the GCM test statistic is defined as
\begin{equation}
	\widehat \rho^{\text{GCM}}_n \equiv \frac{1}{n}\sum_{i = 1}^n  (\sry_i - \widehat g_n(\srz_i))(\srx_i - \widehat \mu_n(\srz_i)),
\end{equation}
where $\widehat \mu_n(\prz)$ and $\widehat g_n(\prz)$ are estimates of $\mu_n(\prz) = \mathbb E_{\mathcal L_n}[\prx|\prz]$ and $g_n(\prz) \equiv \mathbb E_{\mathcal L_n}[\pry|\prz]$, respectively. This statistic is shown to converge under conditional independence to a mean-zero normal limit as long as the estimates of $\mathbb E_{\mathcal L_n}[\prx|\prz]$ and $\mathbb E_{\mathcal L_n}[\pry|\prz]$ are both consistent, while the product in these estimation errors tends to zero at a rate of $o(n^{-1/2})$. By contrast, the MX(2) $F$-test places more weight on the model for $\prx|\prz$ (assuming both first and second moments of this conditional distribution are known) while placing less weight on the model for $\pry|\prz$ (not assuming even consistency for $\mathbb E_{\mathcal L_n}[\pry|\prz]$). Therefore, while the MX(2) $F$-test closely resembles the GCM test, the assumptions required for validity of these two methods do not subsume each other (Table~\ref{tab:summary-2}). 


	\begin{table}[h!]
	\centering
	\begin{tabular}{l|ll}
		Method & Guarantee & Resampling \\
		\hline
		CRT & Finite-sample & Yes \\	
		MX(2) $F$-test & Asymptotic & No  \\
		GCM test & Asymptotic & No
	\end{tabular}
	\caption{Type-I error guarantee and necessity of resampling for each method compared.}
	\label{tab:summary-1}
\end{table}

\begin{table}[h!]
	\small
	\begin{tabular}{l|lllll}
		Method & $\mathcal E(\mathbb E[\prx|\prz]$) & $\mathcal E(\text{Var}[\prx|\prz])$ & $\mathcal E(\mathcal L(\prx|\prz))$ & $\mathcal E(\mathbb E[\pry|\prz]$) & $\mathcal E(\mathbb E[\prx|\prz]) \times \mathcal E(\mathbb E[\pry|\prz])$ \\
		\hline
		CRT & 0 & 0 & 0 & --& --  \\
		MX(2) $F$-test  & 0 & 0 & -- & -- & -- \\
		GCM test & $o_p(1)$ & -- & -- & $o_p(1)$ & $o_p(n^{-1/2})$
	\end{tabular}
	\caption{Assumptions necessary for each method compared (excluding moment assumptions). Here, $\mathcal E(\cdot)$ refers to the root-mean-squared estimation error of a given quantity.}
	\label{tab:summary-2}
\end{table}

\paragraph{Comparison to causal inference literature.}

Theorem~\ref{thm:equivalence} is a statement about the asymptotic equivalence between the resampling-based CRT and the asymptotic MX(2) $F$-test. The MX CRT is in the spirit of the finite-population approach to causal inference (Fisher), whereas the MX(2) $F$-test is in the spirit of the asymptotic super-population approach (Neyman). We find that research in these two strands of work on causal inference have proceeded largely separately from each other, and therefore connections between the two have received relatively little attention. However, there has been a recent line of work \cite{Ding2017,Wu2020a,Zhao2021} focusing on the asymptotic behavior of the Fisher randomization test in the context of completely randomized experiments. A similar result to Theorem~\ref{thm:equivalence} is that the Fisher randomization test (analogous to the CRT) is asymptotically equivalent to the Rao score test (analogous to the MX(2) $F$-test) in a completely randomized experiment \cite[Theorem A.1]{Ding2017}. Theorem~\ref{thm:equivalence} can be viewed as an extension of this result to accommodate for non-binary treatments as well as high-dimensional covariates affecting both treatment and response.


\paragraph{}

Having found that the dCRT is a natural test to apply for power against semiparametric alternatives, and that this test is equivalent to the simpler MX(2) $F$-test, we are ready to study the relationship between the power of the dCRT and the quality of the underlying machine learning procedure.

%

\section{dCRT power against semiparametric alternatives} \label{sec:asymptotic-power}

In this section, we present our results on the asymptotic power of the dCRT against the semiparametric alternatives~\eqref{eq:linearity}. We state these results first (Section~\ref{sec:power-results}), then apply these to lasso-based dCRT (Section~\ref{sec:power-lasso-based}), and finally compare our results to existing ones (Section~\ref{sec:comparison-to-existing-results-4}). All proofs are deferred to Appendix~\ref{sec:proofs-sec45}.

\subsection{Power against semiparametric alternatives} \label{sec:power-results}

In Theorem~\ref{thm:power} below, we express the asymptotic power of the dCRT against alternatives~\eqref{eq:linearity} in terms of the variance-weighted mean square  error of $\widehat g_n$:
\begin{equation} 
	\mathcal E^2_n \equiv  \mathbb E_{\mathcal L_n}\left[(\widehat g_n(\prz)-\bar g_n(\prz))^2 \cdot \overline \Sigma_n^{-1/2}\Sigma_n(\prz)\overline \Sigma_n^{-1/2}\right], \quad \text{where} \ \overline \Sigma_n\equiv \mathbb E_{\mathcal L_n}[\Sigma_n(\prz)].
\end{equation}
Recall from definition~\eqref{eq:g-n-prime-def} that $\bar g_n(\prz) \equiv \mathbb E[\pry|\prz]$. Note that if $(\prx, \prz)$ is jointly Gaussian, then $\Sigma_n(\prz) = \overline \Sigma_n$ for all $\prz$ and therefore $\mathcal E_n^2 = \mathbb E_{\mathcal L_n}[(\widehat g_n(\prz)-\bar g_n(\prz))^2] \cdot I_d$. Our result requires the following moment assumptions:
	\begin{equation}
\sup_{n} \|\overline \Sigma_n^{-1}\| < \infty,
\label{eq:s-n-inverse-assump}
\end{equation}
\begin{equation}
	\sup_n\ \mathbb E_{\mathcal L_n}[\|\prx - \mu_n(\prz)\|^8] < \infty,
	\label{eq:eighth-moment-assump-1}
\end{equation}
and
\begin{equation}
	\sup_n\ \mathbb E_{\mathcal L_n}[(\widehat g_n(\prz)-\bar g_n(\prz))^4\|\prx - \mu_n(\prz)\|^4] < \infty.
	\label{eq:eighth-moment-assump-2}
\end{equation}

\begin{theorem}  \label{thm:power}
Suppose $\mathcal L_n$ and $\widehat g_n$ (trained out of sample) are such that the conditional distribution $\mathcal L_n(\pry|\prx, \prz)$ follows the semiparametric alternative~\eqref{eq:linearity}, the moment conditions~\eqref{eq:s-n-inverse-assump}, \eqref{eq:eighth-moment-assump-1}, and \eqref{eq:eighth-moment-assump-2} are satisfied, and that the conditional variance and variance-weighted mean squared error converge:
\begin{equation}
	\overline \Sigma_n \rightarrow \overline{\Sigma} \quad \text{and} \quad \mathcal E_n^2 \rightarrow \mathcal E^2 \quad \text{as } n \rightarrow \infty.
	\label{eq:limits}
\end{equation}
Then, we have the following two statements:
\begin{enumerate}
\item[(a)] (Consistency) If $\beta_n = \beta \neq 0$ for each $n$, then the dCRT $\phi_n^{\mathcal L_n}$ and the MX(2) $F$-test $\phi_n^{N(\mu_n, \Sigma_n)}$ are consistent:
\begin{equation}
	\begin{split}
		\lim_{n \rightarrow \infty} \mathbb E_{\mathcal L_n}\left[\phi_n^{\mathcal L_n}(\srx, \sry, \srz)\right] &= \lim_{n \rightarrow \infty} \mathbb E_{\mathcal L_n}\left[\phi_n^{N(\mu_n, \Sigma_n)}(\srx, \sry, \srz)\right] = 1.
		\label{eq:consistency}
	\end{split}
\end{equation}
\item[(b)] (Power against local alternatives) If $\beta_n = h_n/\sqrt{n}$ for a convergent sequence ${h_n \rightarrow h \in \mathbb R^d}$, then
\begin{equation}
	\begin{split}
			\lim_{n \rightarrow \infty} \mathbb E_{\mathcal L_n}\left[\phi_n^{\mathcal L_n}(\srx, \sry, \srz)\right] &= \lim_{n \rightarrow \infty} \mathbb E_{\mathcal L_n}\left[\phi_n^{N(\mu_n, \Sigma_n)}(\srx, \sry, \srz)\right] \\
		&= \mathbb P[\chi^2_d(\|(\sigma^2I_d +\mathcal E^2)^{-1/2}\overline \Sigma^{1/2} h\|^2) > c_{d,1-\alpha}].
		\label{eq:main-conclusion}
	\end{split}
\end{equation}
\end{enumerate}
\end{theorem}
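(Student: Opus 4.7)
The plan is to compute the asymptotic distribution of the MX(2) $F$-test statistic $T_n = \|U_n\|^2$ directly under the alternative~\eqref{eq:linearity}, and then transfer the conclusion to the CRT via Theorem~\ref{thm:equivalence} (the non-accumulation condition~\eqref{eq:non-accumulation} is automatic, since the limit law of $T_n$ is either noncentral $\chi^2_d$ or places all its mass at $+\infty$, neither of which puts mass at $c_{d,1-\alpha}$). Writing $\delta_i \equiv \srx_i - \mu_n(\srz_i)$ and $r_i \equiv g_n(\srz_i) - \widehat g_n(\srz_i)$, the model gives $\sry_i - \widehat g_n(\srz_i) = \delta_i^T \beta_n + r_i + \seps_i$, yielding the decomposition
\[
\sqrt n\,\widehat \rho_n \;=\; \Bigl(\tfrac{1}{n}\sum_{i=1}^n \delta_i\delta_i^T\Bigr)\sqrt n\,\beta_n \;+\; \tfrac{1}{\sqrt n}\sum_{i=1}^n (r_i+\seps_i)\delta_i.
\]
A weak LLN (using~\eqref{eq:eighth-moment-assump-1}) gives $\tfrac{1}{n}\sum_i \delta_i\delta_i^T \to_p \overline \Sigma$, and a parallel LLN shows that in case (b), $\widehat S_n^2 \to_p \overline \Sigma^{1/2}(\sigma^2 I_d+\mathcal E^2)\overline\Sigma^{1/2}$ (the two signal contributions to $\widehat S_n^2$ are $O_p(n^{-1/2})$ and $O_p(n^{-1})$); in case (a) the limit is a larger but still bounded, positive-definite matrix.

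For part (a) with $\beta_n=\beta\neq 0$ fixed, the signal term in $\sqrt n \widehat \rho_n$ has norm tending to infinity in probability while the noise term is $O_p(1)$ and $\widehat S_n^{-1}$ stays bounded; hence $T_n\to_p \infty$ and the MX(2) $F$-test is consistent. For part (b) with $\sqrt n\beta_n=h_n\to h$, the signal term converges in probability to $\overline\Sigma h$, and I would apply a multivariate Lindeberg-Feller CLT to the triangular array $\{(r_i+\seps_i)\delta_i\}$, which is i.i.d.\ conditional on the separately drawn training sample. Its summand covariance simplifies to $\overline\Sigma_n^{1/2}(\sigma^2 I_d+\mathcal E_n^2)\overline\Sigma_n^{1/2}$: the cross term $2r_i\seps_i\delta_i\delta_i^T$ has zero expectation because $\seps_i\independent(\prx,\prz)$ with mean zero; the $r_i^2$ piece equals $\overline \Sigma_n^{1/2} \mathcal E_n^2 \overline \Sigma_n^{1/2}$ by the definition of $\mathcal E_n^2$; and the $\seps_i^2$ piece is $\sigma^2 \overline \Sigma_n$. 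Slutsky then yields $U_n \to_d N(\mu_\infty, I_d)$ with $\mu_\infty \equiv \bigl(\overline\Sigma^{1/2}(\sigma^2 I_d+\mathcal E^2)\overline\Sigma^{1/2}\bigr)^{-1/2}\overline\Sigma\,h$; a short algebraic rearrangement gives $\|\mu_\infty\|^2 = \|(\sigma^2 I_d+\mathcal E^2)^{-1/2}\overline\Sigma^{1/2} h\|^2$, so $T_n \to_d \chi^2_d(\|\mu_\infty\|^2)$ matches~\eqref{eq:main-conclusion}.

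The main technical obstacle will be verifying the Lindeberg condition for this triangular-array CLT. The summands $(r_i+\seps_i)\delta_i$ are products of two $n$-dependent factors, and both the conditional distribution $\prx|\prz$ and the prediction residual $\widehat g_n-g_n$ change with $n$, so no ordinary i.i.d.\ CLT applies. The eighth-moment hypotheses~\eqref{eq:eighth-moment-assump-1} and~\eqref{eq:eighth-moment-assump-2} are precisely calibrated to give, via Cauchy-Schwarz, a uniform bound on $\mathbb E\|(r_i+\seps_i)\delta_i\|^4$, which is enough for the $L^{2+\eta}$ strengthening of Lindeberg; meanwhile~\eqref{eq:s-n-inverse-assump} keeps $\overline\Sigma$ invertible in the limit so that $\widehat S_n^{-1}$ remains bounded and Slutsky applies. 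Once this CLT is in hand, the remaining steps---algebraic simplification of $\|\mu_\infty\|^2$ and invocation of Theorem~\ref{thm:equivalence} to upgrade the MX(2) $F$-test conclusion to the CRT---are routine.
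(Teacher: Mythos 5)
Your proposal is correct and follows essentially the same route as the paper: the same signal-plus-noise decomposition of $\sqrt n\,\widehat\rho_n$, a WLLN for the signal term and $\widehat S_n^2$, a triangular-array Lyapunov/Lindeberg CLT for $\tfrac{1}{\sqrt n}\sum_i (r_i+\seps_i)\delta_i$ with the same covariance computation $\sigma^2\overline\Sigma_n+\overline\Sigma_n^{1/2}\mathcal E_n^2\overline\Sigma_n^{1/2}$, and transfer to the CRT via Theorem~\ref{thm:equivalence} after checking non-accumulation. The only wording to tighten is in part (a): what you need is not that $\widehat S_n^{-1}$ is bounded above but that $\lambda_{\min}(\widehat S_n^{-1})$ is bounded away from zero (equivalently $\|\widehat S_n\|$ bounded), which your observation that $\widehat S_n^2$ stays in a compact set of positive-definite matrices already supplies.
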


This theorem is proved in Appendix~\ref{sec:proofs-sec45}. Recalling that $\chi^2_d(\lambda)$ denotes the noncentral chi-square distribution with $d$ degrees of freedom and non-centrality parameter $\lambda$ and $c_{d,1-\alpha}$ denotes the $1-\alpha$ quantile of $\chi^2_d$, the second part of Theorem~\ref{thm:power} states that the dCRT has power equal to that of a $\chi^2$ test of a multivariate normal random vector having mean zero under the alternative $N((\sigma^2I_d +\mathcal E^2)^{-1/2}\overline \Sigma^{1/2} h, I_d)$. This result establishes a direct link between the estimation error in $\widehat g_n$ and the power of the CRT against local alternatives. In particular, the mean-squared error term $\mathcal E^2$ contributes additively to the irreducible error term $\sigma^2 I_d$. We can gain intuition for this result by considering the regression model
\begin{equation}
\begin{split}
\pry - \widehat g_n(\prz) &= (\prx - \mu_n(\prz))^T\beta_n + (\bar g_n(\prz) - \widehat g_n(\prz) + \bm \eps)
\label{eq:regression-model}
\end{split}
\end{equation}
obtained from the semiparametric model~\eqref{eq:linearity} by subtracting $\widehat g_n(\prz)$ from both sides. The test statistic $T_n$ is based on the quantity $\widehat \rho_n$ defined in equation~\eqref{rho-hat}, which can be viewed as an unnormalized version of the fitted regression coefficients of $\sry - \widehat g_n(\srz)$ on $\srx - \mu_n(\srz)$. The term $\bar g_n(\prz) - \widehat g_n(\prz)$ in the regression model~\eqref{eq:regression-model} contributes additively to the residual error term, so in a traditional regression analysis we would expect the power of the test to depend on the variance of this error term. In fact, standard large-sample OLS theory (see e.g. Section 2.3 of Hayashi's book \cite{Hayashi2000}) states that the power against local alternatives of the $F$-test in the regression model~\eqref{eq:regression-model} is exactly the same as that of the dCRT (and MX(2) $F$-test) stated in equation~\eqref{eq:main-conclusion}. Of course, the usual $F$-test applied to the regression~\eqref{eq:regression-model} relies on the validity of this model while the dCRT and MX(2) $F$-test instead rely on knowledge of $\text{Var}[\prx|\prz]$. Note that \cite{Wang2020b} also find the power of an MX test and a classical OLS test to have the same power (see their Appendix F).

\subsection{Example: Power of lasso-based CRT} \label{sec:power-lasso-based}

A key ingredient in the power formula~\eqref{eq:main-conclusion} is the limiting variance-weighted mean squared error $\mathcal E^2$. This error depends on the machine learning method used to obtain $\widehat g_n$. We can leverage existing results about the asymptotic behavior of prediction error of machine learning methods in high dimensions. In this section, we consider the case when $\widehat g_n$ is trained using the lasso in the orthogonal design case, which was studied by Bayati and Montanari \cite{Bayati2011}. Note that a recent extension of Bayati and Montanari's results to correlated designs \cite{Celentano2020} can also be used in tandem with~\eqref{eq:main-conclusion}, but we focus our exposition on the orthogonal design case for the sake of simplicity.

\begin{setting}[\bf Linear regression with orthogonal design] \label{setting:orthogonal-design}
Consider a sequence of laws $\mathcal L_n$ such that $\mathcal L_n(\prx, \prz) = N(0, I_{1+p})$ and such that $\mathcal L_n(\pry|\prx, \prz)$ follows the semiparametric model~\eqref{eq:linearity},  with $\beta_n = h_n/\sqrt{n}$ for some convergent sequence $h_n \rightarrow h \in \mathbb R$ and $g_n(\prz) = \prz^T \gamma_n$ for a sequence $\gamma_n \in \mathbb R^p$ such that the entries of $\sqrt n \gamma_n$ converge weakly to a random variable $\Gamma$ on $\mathbb R$ with $\mathbb P[\Gamma \neq 0] > 0$ and $\|\sqrt n \gamma_n\|^2/p \rightarrow \mathbb E[\Gamma^2] < \infty$.
\end{setting}

Until now, we have denoted by $n$ the sample size used for constructing tests, leaving unspecified the size of the separate sample used to train $\widehat g_n$. To get concrete expressions for the power of the dCRT based on a specific machine learning method to obtain $\widehat g_n$, we must take the training sample size into account, which we will do via sample splitting for convenience. We therefore define the test $\varphi_n^{\mathcal L_n}(\srx, \sry, \srz)$, which for some training proportion $\pi \in (0,1)$ split the data into $\pi n$ training observations $(\srx_{\text{train}}, \sry_{\text{train}}, \srz_{\text{train}})$ and $(1-\pi)n$ test observations $(\srx_{\text{test}}, \sry_{\text{test}}, \srz_{\text{test}})$. This test proceeds by first running a lasso of $\sry_{\text{train}}$ on $\srz_{\text{train}}$ with regularization parameter $\lambda$ to obtain an estimate $\widehat \gamma_{\pi n}$. The test $\varphi_n^{\mathcal L_n}(\srx, \sry, \srz)$ is then obtained by running the dCRT on the test data $(\srx_{\text{test}}, \sry_{\text{test}}, \srz_{\text{test}})$ using the estimate $\widehat g_n(\prz) = \prz^T \widehat \gamma_{\pi n}$:
\begin{equation*}
	\varphi_n^{\mathcal L_n}(\srx, \sry, \srz) \equiv \phi_{(1-\pi)n}^{\mathcal L_n}(\srx_{\text{test}}, \sry_{\text{test}}, \srz_{\text{test}}).
\end{equation*}
Note that the dependence of $\phi_{(1-\pi)n}^{\mathcal L_n}(\srx_{\text{test}}, \sry_{\text{test}}, \srz_{\text{test}})$ on the training data $(\srx_{\text{train}}, \sry_{\text{train}}, \srz_{\text{train}})$ is left implicit. 

Under Setting~\ref{setting:orthogonal-design}, we can directly use Bayati and Montanari's theory \cite{Bayati2011} to obtain 
\begin{equation}
\lim_{n \rightarrow \infty}\mathcal E_n^2 = \tau_*^2 - \sigma^2 \quad \text{a.s. in } (\srx_{\text{train}}, \sry_{\text{train}}, \srz_{\text{train}}),
\label{eq:bm-result}
\end{equation}
where $(\alpha_*,\tau_*)$ is the unique solution of the system below:
\begin{equation}
	\begin{split}
		\lambda &= \alpha \tau (1-(\pi\delta)^{-1}\mathbb E[\eta'(\sqrt \pi \Gamma + \tau W; \alpha \tau)]), \\
		\tau^2 &= \sigma^2 + (\pi\delta)^{-1}\mathbb E[(\eta(\sqrt \pi\Gamma + \tau W; \alpha \tau) - \sqrt \pi\Gamma)^2].
	\end{split}
	\label{eq:amp-system}
\end{equation}
Here, $W \sim N(0,1)$ is independent of $\Gamma$ and $\eta(x; \theta) = (|x|-\theta)_+\textnormal{sign}(x)$ is the soft threshold function. This leads to the following corollary of Theorem~\ref{thm:power}, proved in Appendix~\ref{sec:proofs-sec45}:
\begin{corollary} \label{cor:lasso}
Under Setting~\ref{setting:orthogonal-design}, the asymptotic power of the dCRT converges to that of a standard normal location test with alternative mean $\tau_*^{-1} h\sqrt{1-\pi}$:
\begin{equation}
	\begin{split}
	\lim_{n \rightarrow \infty}\mathbb E_{\mathcal L_n}[\varphi^{\mathcal L_n}_{n}(\srx, \sry, \srz)] =  \mathbb P[|N(\tau_*^{-1} h\sqrt{1-\pi},1)| > z_{1-\alpha/2}].
	\label{eq:lasso-power}
	\end{split}
\end{equation}
\end{corollary}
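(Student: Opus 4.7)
The plan is to derive the corollary as a direct application of Theorem~\ref{thm:power}(b), applied conditionally on the training sample, after inserting the Bayati--Montanari prediction-error limit~\eqref{eq:bm-result}.

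First I would reframe the sample-split tests in the notation of Theorem~\ref{thm:power}. By construction, $\varphi^{\textnormal{MX(2)}}_n$ and $\varphi^{\textnormal{CRT}}_n$ are $\phi^{\textnormal{MX(2)}}_m$ and $\phi^{\textnormal{CRT}}_m$ applied to the $m\equiv(1-\pi)n$ holdout observations using the pre-trained plug-in $\widehat g_n(\prz)=\prz^T\widehat\gamma_{\pi n}$. Rewriting the local alternative $\beta_n=h_n/\sqrt{n}$ as $\beta_m=h'_m/\sqrt{m}$ with $h'_m=h_m\sqrt{1-\pi}\to h\sqrt{1-\pi}$ puts us in the setting of Theorem~\ref{thm:power}(b) with $d=1$ and local parameter $h'=h\sqrt{1-\pi}$.

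Next I would verify the theorem's hypotheses conditional on the training data. Because $(\prx,\prz)\sim N(0,I_{1+p})$ forces $\prx\independent\prz$, we have $\mu_n(\prz)\equiv 0$ and $\Sigma_n(\prz)\equiv 1$, so $\overline\Sigma_n=1$ and the variance-weighted MSE reduces to $\mathcal E_m^2=\mathbb E[(\widehat g_n(\prz)-g_n(\prz))^2]$. The Bayati--Montanari limit~\eqref{eq:bm-result} gives $\mathcal E_m^2\to\tau_*^2-\sigma^2$ almost surely in the training data. Moment conditions~\eqref{eq:s-n-inverse-assump} and~\eqref{eq:eighth-moment-assump-1} are trivial, while~\eqref{eq:eighth-moment-assump-2} follows from the fact that, conditional on training, $\widehat g_n(\prz)-g_n(\prz)=\prz^T(\widehat\gamma_{\pi n}-\gamma_n)\sim N(0,\|\widehat\gamma_{\pi n}-\gamma_n\|^2)$ has fourth moment $3\|\widehat\gamma_{\pi n}-\gamma_n\|^4$, which is almost surely bounded by Bayati--Montanari's estimation-error control. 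Applying Theorem~\ref{thm:power} along any good training sequence and then integrating over the training distribution by dominated convergence ($\varphi_n\in[0,1]$) yields the unconditional limit.

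Finally, plugging $d=1$, $\overline\Sigma=1$, $\mathcal E^2=\tau_*^2-\sigma^2$, and $h'=h\sqrt{1-\pi}$ into~\eqref{eq:main-conclusion}, the noncentrality parameter simplifies to $\lambda=\tau_*^{-2}h^2(1-\pi)$; since $\chi^2_1(\lambda)$ is the distribution of $Z^2$ with $Z\sim N(\sqrt\lambda,1)$ and $c_{1,1-\alpha}=z_{1-\alpha/2}^2$, the tail rewrites as $\mathbb P[|N(\tau_*^{-1}h\sqrt{1-\pi},1)|>z_{1-\alpha/2}]$, i.e.~\eqref{eq:lasso-power}. The main obstacle lies in the verification step: Bayati--Montanari's theorems are phrased in terms of empirical averages of pseudo-Lipschitz functions of AMP iterates, so translating them into the \emph{population} prediction MSE against a fresh Gaussian test covariate (and into the fourth-moment bound for~\eqref{eq:eighth-moment-assump-2}) requires a short auxiliary argument. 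However, since the excerpt quotes~\eqref{eq:bm-result} as a known result, this step can be invoked as a black box and the remainder is routine substitution.
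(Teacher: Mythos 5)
Your proposal is correct and follows essentially the same route as the paper's proof: reduce to Theorem~\ref{thm:power}(b) conditionally on the training sample with effective local parameter $h\sqrt{1-\pi}$ from the reduced test sample size, identify $\mathcal E^2=\|\widehat\gamma_{\pi n}-\gamma_n\|^2\to\tau_*^2-\sigma^2$ via Bayati--Montanari, verify the moment conditions using Gaussianity, and integrate over the training data by dominated convergence using $c_{1,1-\alpha}=z_{1-\alpha/2}^2$. The only cosmetic difference is that you verify~\eqref{eq:eighth-moment-assump-2} by exploiting $\prx\independent\prz$ and exact Gaussian moments where the paper uses Cauchy--Schwarz, which is an equally valid minor variation.
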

Corollary~\ref{cor:lasso} gives the power of these lasso-based methods in a very simple form, with the prediction error of the lasso entering through the effective noise level $\tau_*$. The impact of the splitting proportion $\pi$ on power can be seen in the multiplication of the signal strength $h$ by $\sqrt{1-\pi}$. The splitting proportion implicitly impacts the effective noise level $\tau_*$ as well; smaller $\pi$ lead to greater effective noise levels. Note that the expectations in Corollary~\ref{cor:lasso} are over both training and test sets, while the expectations in Theorem~\ref{thm:power} are over the test set only. 

\subsection{Comparison to existing results} \label{sec:comparison-to-existing-results-4}

Two other power analyses of the CRT have been recently conducted \cite{Wang2020b, Celentano2020} in parallel to the first version of our paper \cite{Katsevich2020a}, focusing on the case where $g_n(\prz) = \prz^T \gamma_n$, $\widehat g_n$ is trained using the lasso, $n/p \rightarrow \delta$, and the generalized covariance measure test statistic $\widehat \rho_n$ is used. The former study considers the case of orthogonal design (Setting~\ref{setting:orthogonal-design}), while the latter considers arbitrary joint Gaussian distribution for $(\prx,\prz)$. Assuming $\mathbb E_{\mathcal L_n}[\text{Var}_{\mathcal L_n}[\prx|\prz]] \rightarrow s^2$ (the quantity we called $\overline \Sigma$ in Section~\ref{sec:power-results}, with different notation to clarify that for $\text{dim}(\prx) = 1$ the covariance matrix simply becomes a variance), the works \cite{Wang2020b, Celentano2020} found that the power of the CRT with in-sample lasso fit tends to that of a normal location test with alternative mean $sh/\tau_*$, where $\tau_*$ is the effective noise level from AMP theory (in the orthogonal design case, $(\alpha_*, \tau_*)$ are defined by equation~\eqref{eq:amp-system} with $\pi = 1$) and $h$ is the limiting constant of the local alternatives in Setting~\ref{setting:orthogonal-design}. 

This is a similar expression to what we found in Corollary~\ref{cor:lasso} in the orthogonal design case. Furthermore, note that $\tau_*^2 = \sigma^2 + \mathcal E^2$ (i.e. the out-of-sample prediction error of the lasso). It follows that the power expression found by \cite{Wang2020b, Celentano2020} is exactly the same as what we found in part (b) of Theorem~\ref{thm:power}, despite the fact that their $\widehat g_n$ is fit in-sample. 
\cite{Wang2020b} also derive a power expression for the CRT when $\widehat g_n$ is fit in-sample via ordinary least squares (allowing correlated covariates, as we do in Theorem~\ref{thm:power}), which also happens to coincide with expression~\eqref{eq:main-conclusion}. Such in-sample results have been obtained only for these two test statistics, though we conjecture that such results hold more broadly.
By contrast, training $\widehat g_n$ on a separate sample allows us to prove Theorem~\ref{thm:power} for very broad (almost unrestricted) classes of machine learning methods $\widehat g_n$.

Finally, we note a connection between Theorem~\ref{thm:power} and causal inference. It is widely known in causal inference (see e.g. \cite[Section 7.5]{Imbens2015}) that adjustment for covariates $\srz$ in randomized experiments (a) yields consistent estimates despite misspecification of $\mathcal L(\pry|\prx,\prz)$ and (b) improve estimation efficiency to the extent that this adjustment captures the distribution $\mathcal L(\pry|\prx,\prz)$. This fact mirrors the conclusions of Theorem~\ref{thm:power}. The asymptotic variance of the regression-based estimator for the average treatment effect in a completely randomized experiment is a standard result, but we are unaware of a quantitative expression of the asympotic efficiency of covariate-adjusted versions of the Fisher randomization test (though some insight is provided by \cite{Zhao2021}).

\section{Most powerful one-bit $p$-values for knockoffs}
\label{sec:knockoffs}

MX knockoffs \cite{CetL16} operate differently than the CRT; they simultaneously test the conditional associations of many variables with a response. Given $m$ variables $\prx_1, \dots, \prx_m$ and a response $\pry$, it is of interest to test the CI hypotheses
\begin{equation*}
H_j: \pry \independent \prx_j \mid \prx_{-j}, \quad j = 1, \dots, m.
\end{equation*}
Note that $j$ indexes variables, rather than samples. Comparing to our setup, $\prx_j$ plays the role of $\prx$ and $\prx_{-j}$ plays the role of $\prz$. In particular, we allow $\prx_j$ to be a group of variables. Like HRT, knockoffs only requires one model fit, so it too is computationally faster than the CRT. Among these three MX procedures, knockoffs is currently the most popular. We briefly review it next, and then present an optimality result in the spirit of Theorem~\ref{prop:crt-optimality}. Its proof is given in Appendix~\ref{sec:knockoffs-proofs}.

\subsection{A brief overview of knockoffs} \label{sec:knockoffs-overview}

A set of knockoff variables $\prxk = (\prxk_1, \dots, \prxk_m)$ is constructed to satisfy conditional exchangeability:
\begin{equation}
\mathcal L(\prx_j, \prxk_j | \prx_{-j}, \prxk_{-j}) = \mathcal L(\prxk_j, \prx_j | \prx_{-j}, \prxk_{-j}), \quad j = 1, \dots, m
\label{conditional-exchangeability}
\end{equation}
and conditional independence 
\begin{equation}
\pry \independent \prxk \mid \prx.
\label{knockoff-conditional-independence}
\end{equation}
Here, $\prx_j \in \mathbb R$ denotes the $j$th element of the vector $\prx \in \mathbb R^m$ and $\prx_{-j} \in \mathbb R^{m-1}$ denotes all elements except the $j$th. Also, $\srx_{i,\bullet} \in \mathbb R^{n}$, $\srx_{\bullet, j} \in \mathbb R^m$, and $\srx_{\bullet, -j} \in \mathbb R^{n \times (m-1)}$ denote the $i$th row, $j$th column, and all columns but the $j$th of the matrix $\srx \in \mathbb R^{n \times m}$. Given such a construction, a set of knockoff variables $\srxk_{i,\bullet}$ is sampled from  $\mathcal L(\prxk|\prx = \srx_{i,\bullet})$ for each $i$. Knockoff inference is then based on a form of data-carving: variables are given an ordering $\tau(1), \dots, \tau(m)$ determined arbitrarily from $([\srx, \srxk], \sry)$ as long as $\srx_{\bullet, j}$ and $\srxk_{\bullet, j}$ are treated symmetrically. Variables are then tested in that order based on \textit{one-bit $p$-values} $p_j$ measuring the contrast between the strength of association between $\srx_{\bullet, j}$ and $\sry$ and that between $\srxk_{\bullet, j}$ and $\sry$. Given any statistic $T_j([\srx, \srxk], \sry)$ measuring the strength of association between $\srx_j$ and $\sry$, define the one-bit $p$-value
\begin{equation}
p_j([\srx, \srxk], \sry) \equiv 
\begin{cases}
\frac12, \quad &\text{if } T_j([\srx, \srxk], \sry) > T_j([\srx, \srxk]_{\text{swap}(j)}, \sry);  \\
1, \quad &\text{if } T_j([\srx, \srxk], \sry) \leq T_j([\srx, \srxk]_{\text{swap}(j)}, \sry).
\end{cases}
\label{one-bit-pvalue}
\end{equation}
Here, $[\srx, \srxk]_{\text{swap}(j)}$ is defined as the result of swapping $\srx_{\bullet, j}$ with $\srxk_{\bullet, j}$ in $[\srx, \srxk]$ while keeping all other columns in place. A set of variables with guaranteed false discovery rate control is chosen via the ordered testing procedure \textit{Selective SeqStep} \cite{BC15}, applied to the $p$-values $p_j$ in the order $\tau$.

\subsection{Most powerful one-bit $p$-value}

It is harder to analyze the power of knockoffs than that of the CRT for several reasons. Knockoffs is fundamentally a \textit{multiple} testing procedure, coupling the analysis of $H_j$ across variables $j$. Furthermore, the qualities of the ordering $\tau$ and of the one-bit $p$-values $p_j$ both contribute to the power of knockoffs. Due to these challenges, no optimality results are currently available for knockoffs. We take a first step in this direction by exhibiting the test statistics $T_j$ that lead to most powerful one-bit $p$-values against a point alternative. 

\begin{theorem} \label{prop:knockoff-optimality}
	Let $\bar{\mathcal L}$ be a fixed alternative distribution for $(\prx,\pry)$, with $\bar{\mathcal L}(\pry|\prx) = \bar f(\pry|\prx)$. Define the likelihood statistic
	\begin{equation}
	T_j^{\textnormal{opt}}([\srx, \srxk], \sry) \equiv \prod_{i = 1}^n\bar f(\sry_i|\srx_{i,\bullet}).
	\label{log-likelihood-ratio-knockoffs}
	\end{equation}
	Assuming that ties do not occur, that is
	\begin{equation}
	\mathbb P_{\bar {\mathcal L}}[T^{\textnormal{opt}}_j([\srx, \srxk], \sry) = T^{\textnormal{opt}}_j([\srx, \srxk]_{\textnormal{swap}(j)}, \sry), \srx_{\bullet, j} \neq \srxk_{\bullet, j}] = 0, 
	\label{nondegeneracy-assumption}
	\end{equation}
	we have that the above likelihood statistic yields an optimal one-bit $p$-value:
	\begin{equation}
	T_j^{\textnormal{opt}} \in \underset{T_j}{\arg \max}\ \mathbb P[T_j([\srx, \srxk], \sry) > T_j([\srx, \srxk]_{\textnormal{swap}(j)}, \sry)].
	\label{unconditional-knockoff-optimality}
	\end{equation}
\end{theorem}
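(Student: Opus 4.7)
The plan is to mirror the CRT optimality argument of Theorem~\ref{prop:crt-optimality}: condition on a suitable swap-invariant ``sufficient statistic'' to reduce the problem to a single binary decision, then invoke a MAP (Neyman--Pearson) argument.

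First, let $S \equiv (\sry, \srx_{-j}, \srxk_{-j}, \{\srx_{\bullet,j}, \srxk_{\bullet,j}\})$, where the last coordinate denotes the \emph{unordered} pair of $n$-vectors. Since the column-$j$ swap exchanges the two elements of this pair, $S$ is invariant under $[\srx,\srxk]\mapsto[\srx,\srxk]_{\textnormal{swap}(j)}$. Consequently, on the event $\{\srx_{\bullet,j}\neq\srxk_{\bullet,j}\}$, conditionally on $S$ the only remaining randomness in $W\equiv[\srx,\srxk]$ is a single bit $B\in\{0,1\}$ indicating which of the two possible orderings of the pair is realized; call these $W^{(0)},W^{(1)}$. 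Any statistic $T_j$ is then reduced, given $S$, to the binary decision ``which of $T_j(W^{(0)},\sry)$ and $T_j(W^{(1)},\sry)$ is larger''.

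Second, I would compute the conditional law of $B$ given $S$ via Bayes' rule. Applying the knockoff exchangeability~\eqref{conditional-exchangeability} independently across the i.i.d.\ samples yields the column-level equality $\mathbb P_{\bar{\mathcal L}}[\srx_{\bullet,j}=u_\bullet,\srxk_{\bullet,j}=v_\bullet\mid\srx_{-j},\srxk_{-j}]=\mathbb P_{\bar{\mathcal L}}[\srx_{\bullet,j}=v_\bullet,\srxk_{\bullet,j}=u_\bullet\mid\srx_{-j},\srxk_{-j}]$, which makes the prior on $B$ given the non-$\sry$ part of $S$ uniform on $\{0,1\}$. The knockoff conditional independence~\eqref{knockoff-conditional-independence} then reduces the likelihood factor for $\sry$ to $\prod_i\bar f(\sry_i\mid\srx_{i,\bullet}^{(b)})$. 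Combining,
\begin{equation*}
\mathbb P_{\bar{\mathcal L}}[B=b\mid S]\;\propto\;T_j^{\textnormal{opt}}(W^{(b)},\sry),\qquad b\in\{0,1\}.
\end{equation*}

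Third, I would apply the tower property. For any $T_j$, on $\{\srx_{\bullet,j}\neq\srxk_{\bullet,j}\}$ the event $\{T_j(W,\sry)>T_j(W',\sry)\}$ coincides with $\{B=b^{\ast}_{T_j}(S)\}$, where $b^{\ast}_{T_j}(S)$ denotes the ordering to which $T_j(\cdot,\sry)$ assigns the strictly larger value; on the complementary event the probability vanishes for every $T_j$ because $W=W'$. Thus
\begin{equation*}
\mathbb P_{\bar{\mathcal L}}[T_j(W,\sry)>T_j(W',\sry)]=\mathbb E_{\bar{\mathcal L}}\bigl[\mathbb P_{\bar{\mathcal L}}[B=b^{\ast}_{T_j}(S)\mid S]\cdot\mathbbm 1(\srx_{\bullet,j}\neq\srxk_{\bullet,j})\bigr],
\end{equation*}
which is pointwise maximized over $T_j$ by the MAP choice $b^{\ast}_{T_j}(S)\in\arg\max_b\mathbb P_{\bar{\mathcal L}}[B=b\mid S]$. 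By the previous step this MAP rule is exactly the decision induced by $T_j^{\textnormal{opt}}$; the non-degeneracy assumption~\eqref{nondegeneracy-assumption} guarantees that $T_j^{\textnormal{opt}}$ makes a strict decision almost surely on $\{\srx_{\bullet,j}\neq\srxk_{\bullet,j}\}$, so no measure of $S$ is lost to ties. Integrating over $S$ yields~\eqref{unconditional-knockoff-optimality}.

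The main subtlety I anticipate lies in the Bayes computation of step two: one must jointly invoke both knockoff properties~\eqref{conditional-exchangeability} and~\eqref{knockoff-conditional-independence}---the first to lift sample-level exchangeability to column-level exchangeability (yielding the uniform prior on $B$), and the second to eliminate $\srxk$ from the likelihood so that the posterior odds reduce precisely to $T_j^{\textnormal{opt}}(W^{(0)},\sry)/T_j^{\textnormal{opt}}(W^{(1)},\sry)$. Once this posterior is identified, the remaining MAP argument is routine.
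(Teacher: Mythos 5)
Your proposal is correct and follows essentially the same route as the paper's proof: condition on the swap-invariant data $([\srx,\srxk]_?,\sry)$ to reduce the problem to a single binary decision, use knockoff exchangeability~\eqref{conditional-exchangeability} to get equal prior weight on the two orderings and conditional independence~\eqref{knockoff-conditional-independence} to reduce the likelihood to $\prod_i \bar f(\sry_i|\srx_{i,\bullet})$, then integrate via the tower property using the nondegeneracy assumption. Your MAP formulation is just the paper's Neyman--Pearson test at level $1/2$ in Bayesian clothing, so the two arguments coincide.
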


This theorem is proved in Appendix~\ref{sec:knockoffs-proofs}. The reader observes that $T_j^{\text{opt}}$ is not a function of the knockoff variables or of the index $j$, which may at first seem paradoxical. Recall from the definition~\eqref{one-bit-pvalue}, however, that the one-bit $p$-value compares the test statistic on the original augmented design $[\srx, \srxk]$ and its swapped version $[\srx, \srxk]_{\textnormal{swap}(j)}$. Therefore, the optimal one-bit $p$-value checks whether the original $j$th variable $\srx_{\bullet, j}$ fits with the rest of the data better than does its knockoff $\srxk_{\bullet, j}$. Therefore, the optimal one-bit $p$-value is in fact a function of the knockoffs as well as the index $j$. A simple way of operationalizing Theorem~\ref{prop:knockoff-optimality} is to fit a model $\widehat f(\pry|\prx)$ based on $([\srx, \srxk], \sry)$ in any way that treats original variables and knockoffs symmetrically, and then defining $T_j([\srx, \srxk], \sry) \equiv \widehat f(\sry|\srx)$. The above result continues to hold when $\prx_j$ is a \textit{group} of variables, giving a clean way to combine evidence across multiple variables. A conditional version of the optimality statement~\eqref{unconditional-knockoff-optimality} holds; see equation~\eqref{knockoff-conditional-optimality} in the appendix.

Theorem~\ref{prop:knockoff-optimality} requires that ties occur with probability zero~\eqref{nondegeneracy-assumption}. Proposition~\ref{prop:nondegeneracy-knockoffs} below (proved in Appendix~\ref{sec:proofs-sec45}) states that this nondegeneracy condition holds if either $\pry|\prx$ or $\prx_j|\prx_{-j}, \prxk$ has a continuous distribution.
\begin{proposition}\label{prop:nondegeneracy-knockoffs}
	Suppose $\bar{\mathcal L}(\pry|\prx) = g_{\bm\eta}$, where $\bm \eta = \prx_j \beta_j + f_{-j}(\prx_{-j})$ and $g_\eta$ is a one-dimensional exponential family with natural parameter $\eta$ and strictly convex, continuous log partition function $\psi$. Suppose also that $\prx_j, \beta_j \in \mathbb R$, with $\beta_j \neq 0$. The nondegeneracy condition~\eqref{nondegeneracy-assumption} holds if either 
	\begin{enumerate}
		\item $\prx_{j}|\prx_{-j}, \prxk$ has a density for each $\prx_{-j}, \prxk$, or
		\item $g_\eta$ has a density,
	\end{enumerate}
	where the densities are with respect to the Lebesgue measure.
\end{proposition}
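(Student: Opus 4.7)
The plan is to expand the log-ratio of the candidate statistic evaluated on the original and swapped designs using the exponential family form, and then to show that the resulting equation defines a probability-zero event under each of the two continuity assumptions. Write $d_i \equiv \srx_{i,j} - \srxk_{i,j}$, $\eta_i \equiv \srx_{i,j}\beta_j + f_{-j}(\srx_{i,-j})$, $\eta_i^{\textnormal{sw}} \equiv \srxk_{i,j}\beta_j + f_{-j}(\srx_{i,-j})$, and $W \equiv (\srx_{\bullet,-j}, \srxk)$. Using $\bar{\mathcal L}(\pry|\prx) = g_{\bm\eta}$,
\begin{equation*}
F(\srx_{\bullet,j}, \sry; W) \;\equiv\; \log \frac{T_j^{\textnormal{opt}}([\srx,\srxk],\sry)}{T_j^{\textnormal{opt}}([\srx,\srxk]_{\textnormal{swap}(j)},\sry)} \;=\; \beta_j \sum_{i=1}^n d_i \sry_i \;-\; \sum_{i=1}^n \bigl[\psi(\eta_i) - \psi(\eta_i^{\textnormal{sw}})\bigr].
\end{equation*}
Two structural observations drive the rest of the argument: $F$ is affine in $\sry$ with coefficients $\beta_j d_i$; and, because $\psi$ is strictly convex and $\beta_j \neq 0$, $F$ is strictly concave as a function of each coordinate $\srx_{i,j}$ separately. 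On the event $\{\srx_{\bullet,j} \neq \srxk_{\bullet,j}\}$, at least one $d_i$ is nonzero, so $F$ contains a nontrivial linear combination of the $\sry_i$'s.

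Under the second continuity assumption (that $g_\eta$ has a Lebesgue density), I would pick an index $i^*$ with $d_{i^*} \neq 0$ and condition on $(\srx, \srxk)$ together with the remaining responses $\sry_{-i^*}$. The equation $F = 0$ then reduces to $\beta_j d_{i^*} \sry_{i^*} = C$ for a deterministic constant $C$; since $\sry_{i^*} \mid \srx$ has a density and $\sry \independent \srxk \mid \srx$ by \eqref{knockoff-conditional-independence}, this conditional event has probability zero, and integrating over the conditioning variables closes this case.

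Under the first continuity assumption (that $\srx_j \mid \srx_{-j}, \srxk$ has a density), I would instead condition on $(W, \sry)$. By Bayes' rule,
\begin{equation*}
p(\srx_{\bullet,j} = x \mid W, \sry) \;\propto\; \Bigl[\prod_{i=1}^n g_{\eta_i(x)}(\sry_i)\Bigr] \cdot p(\srx_{\bullet,j}=x \mid W),
\end{equation*}
so $\srx_{\bullet,j} \mid W, \sry$ still admits a Lebesgue density on $\mathbb R^n$. For each fixed $(W, \sry)$, the set $\{x \in \mathbb R^n : F(x,\sry;W) = 0\}$ is Lebesgue null: by coordinatewise strict concavity, for each $(x_2,\ldots,x_n)$ the section $x_1 \mapsto F$ has at most two zeros, so Fubini concludes. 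Integrating over $(W,\sry)$ finishes this case. The main obstacle I anticipate is precisely this Bayes step: naively conditioning only on $\sry$ or only on $(\srx,\srxk)$ does not produce the continuous variable one needs, since in the first case $\sry$ may well be discrete (e.g.\ Bernoulli); but because the exponential-family likelihood appears as a positive multiplicative factor in the posterior, the assumed density of $\srx_{\bullet,j}\mid W$ is preserved after further conditioning on $\sry$.
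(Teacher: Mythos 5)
Your proof is correct and follows essentially the same route as the paper's: in case 1 you reduce to the Lebesgue-nullity of a level set of a strictly concave function of $\srx_{\bullet,j}$ (you conclude via one-dimensional sections and Fubini where the paper invokes the boundary of a convex super-level set), and in case 2 you reduce to an affine equation in $\sry$ with nonzero slope $\beta_j d_{i^*}$. Your explicit Bayes-rule justification that $\srx_{\bullet,j}\mid \srx_{\bullet,-j},\srxk,\sry$ retains a Lebesgue density is a step the paper merely asserts, and it is handled correctly.
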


Finally, we remark that there are a few existing power analyses for knockoffs, all in high-dimensional asymptotic regimes and assuming lasso-based test statistics. Weinstein et al \cite{Weinstein2017} analyze the power of a knockoffs variant in the case of independent Gaussian covariates, while Liu and Rigollet \cite{Liu2019} and Fan et al \cite{Fan2020} study conditions for consistency under correlated designs. Our finite-sample optimality result for the likelihood statistic is complementary to these previous works. Recently, Theorem~\ref{prop:knockoff-optimality} inspired a more powerful variant of knockoffs based on \textit{masked likelihood ratio} statistics, which comes with a different kind of optimality guarantee \cite{Spector2022}.

\section{Discussion}
\label{sec:discussion}

In this paper, we gave some answers to the theoretical questions posed in the introduction. We presented the first finite-sample optimality results in the MX framework and explicitly quantified how the performance of the underlying machine learning procedure impacts the asymptotic power of the CRT. Along the way, we exhibited a weakened form of the MX assumption and a resampling-free methodology valid under only this assumption.


The MX framework is just one setting where black-box prediction methods have been recently employed for the purpose of more powerful statistical inference. Other examples include conformal prediction \cite{FoygelBarber2019}, classification-based two-sample testing \cite{Kim2020} and data-carving based multiple testing \cite{lei2016adapt}. These methods employ machine learning algorithms to create powerful test statistics, calibrating them for valid inference with no assumptions about the method used. However, the more accurate the learned model, the more powerful the inference. Our finite-sample and asymptotic power results explicitly tie the error of the learning algorithm to the power of the test, and thus put this common intuition on a quantitative foundation and may thus help inform the choice and design of machine learning methods used for inferential goals. 

Another set of connections we highlighted throughout the paper is to causal inference and semiparametric estimation. The MX CI problem has strong similarities to the problem of testing Fisher's strong null in a randomized experiment with potentially non-binary treatment and known propensity function. Furthermore, the CRT is similar in spirit to the Fisher randomization test. We believe these connections can be further leveraged to address problems in the MX framework that remain open. For example, consider the situation when the MX assumption is only approximately correct. This is analogous to the situation in observational studies, where the propensity score/function must be estimated. There is a vast literature on this topic based on ``double robustness/machine-learning''~\cite{Chernozhukov2018} or targeted learning~\cite{VanderLaan2011}. Similar ideas may help relax the MX assumption \cite{Huang2019} or study robustness to its misspecification \cite{Barber2018}. Another topic that has received little attention in the MX community is that of estimation (with the exception of \cite{Zhang2020}). Causal inference is a rich source of meaningful estimands (such as the \textit{dose response function} \cite{Hirano2004}) and estimators (such as the proposal of Kennedy et al. \cite{Kennedy2017} for doubly-robust dose response function estimation). Such ideas may be directly relevant to the MX framework.

Much still remains to be done to systematically understand the theoretical properties of MX methods. One interesting direction is to analyze the case when $\widehat g_n$ is learned on the same data as is used for testing. We saw in Section~\ref{sec:comparison-to-existing-results-4} that Theorem~\ref{thm:power} extends to lasso-based estimators $\widehat g_n$ learned in-sample, but the generality of such results remains an open question. It would also be interesting to consider alternatives beyond the linear model~\eqref{eq:linearity}. A natural next step would be to consider generalized linear models. Furthermore, the connections to causal inference referenced above are tantalizing and deserve a dedicated treatment. Finally, we hope that these new theoretical insights about MX methods will lead to improved methodologies that are both statistically and computationally efficient, along the lines of the CRT variants discussed in this paper and in recent work \cite{Liu2020}.

\section*{Acknowledgments}
We thank Asaf Weinstein, Timothy Barry, and Stephen Bates for detailed comments on earlier versions of the manuscript, as well as Ed Kennedy and Larry Wasserman for discussions of the connections to causal inference. We also thank two anonymous referees for constructive feedback that greatly helped us improve the manuscript.

\printbibliography

\appendix

\section{Proofs of Theorem~\ref{prop:crt-optimality}} \label{sec:proofs-sec2}

\begin{proof}
Let $\phi$ be any test satisfying conditional validity property~\eqref{eq:conditional-validity}. Let $\mathcal A$ be a set of pairs $(\sfy, \sfz)$, for which both $\phi$ and $\phi^{\CRT}_{T^{\textnormal{opt}}}$ have level $\alpha$ conditionally on $\sry = \sfy, \srz = \sfz$. By assumption, $\mathbb P[(\sry, \srz) \in \mathcal A] = 1$. Now, fix realizations $(\sfy, \sfz) \in \mathcal A$. We first claim that the conditional power of $\phi$ is bounded above by that of $\phi^{\CRT}_{T^{\textnormal{opt}}}$, i.e.
	\begin{equation}
		\mathbb E_{\bar{\mathcal L}}[\phi(\srx,\sfy,\sfz)|\sry = \sfy, \srz = \sfz] \leq \mathbb E_{\bar{\mathcal L}}[\phi^{\CRT}_{T^\textnormal{opt}}(\srx, \sfy, \sfz)|\sry = \sfy, \srz = \sfz]
		\label{conditional-optimality}
	\end{equation}
	In the conditional problem, the alternative $\bar{\mathcal L}$ induces the following distribution for $\srx$:
	\begin{equation}
		\begin{split}
			\bar{\mathcal L}(\srx = \sfx|\sry = \sfy, \srz = \sfz) = \prod_{i = 1}^n  f^*(\sfx_i|\sfz_i)\tfrac{\bar f(\sfy_i|\sfx_i, \sfz_i)}{\bar f(\sfy_i|\sfz_i)},
			\label{conditional-alternative}
		\end{split}
	\end{equation}
	where
	\begin{equation*}
	\bar f(\sfy_i|\sfz_i) \equiv \int \bar f(\sfy_i|\sfx_i, \sfz_i)f^*(\sfx_i|\sfz_i)d\sfx_i.
	\end{equation*}	
	The conditional problem is therefore a test of 
	\begin{equation*}
		\begin{split}
			&H_0: \mathcal L(\srx = \sfx|\sry = \sfy, \srz = \sfz) = \prod_{i = 1}^n  f^*(\sfx_i | \sfz_i) \quad \text{versus} \\
			&H_1: \mathcal L(\srx = \sfx|\sry = \sfy, \srz = \sfz) = \prod_{i = 1}^n  f^*(\sfx_i|\sfz_i)\tfrac{\bar f(\sfy_i|\sfx_i, \sfz_i)}{\bar f(\sfy_i|\sfz_i)}.
		\end{split}
	\end{equation*}
	This is a simple testing problem, with point null and point alternative. By the Neyman-Pearson lemma, the most powerful test is the one that rejects for large values of the likelihood ratio
	\begin{equation}
 \frac{\prod_{i = 1}^n f^*(\sfx_i|\sfz_i)\frac{\bar f(\sfy_i|\sfx_i, \sfz_i)}{\bar f(\sfy_i|\sfz_i)}}{\prod_{i = 1}^n f^*(\sfx_i|\sfz_i)} = \prod_{i = 1}^n \frac{\bar f(\sfy_i|\sfx_i, \sfz_i)}{\bar f(\sfy_i|\sfz_i)} \propto T^{\text{opt}}(\sfx, \sfy, \sfz),
	\label{eq:likelihood-ratio-derivation}
	\end{equation}
	verifying the conditional optimality claim~\eqref{conditional-optimality}. To obtain the unconditional claim~\eqref{unconditional}, we take an expectation over $\sry, \srz$ and use the fact that $\mathbb P[(\sry, \srz) \in \mathcal A] = 1$:
	\begin{equation}
		\begin{split}
			\mathbb E_{\bar{\mathcal L}}[\phi(\srx,\sry,\srz)] &= \mathbb E_{\bar{\mathcal L}}[\phi(\srx,\sry,\srz) \mid (\sry, \srz) \in \mathcal A] \\
			&= \mathbb E_{\bar{\mathcal L}}[\mathbb E_{\bar{\mathcal L}}[\phi(\srx,\sry,\srz) \mid \sry,\srz] \mid (\sry, \srz) \in \mathcal A] \\
			&\leq \mathbb E_{\bar{\mathcal L}}\left[\mathbb E_{\bar{\mathcal L}}[\phi^{\CRT}_{T^\textnormal{opt}}(\srx,\sry,\srz)\mid\sry,\srz] \mid (\sry, \srz) \in \mathcal A\right] \\
			&= \mathbb E_{\bar{\mathcal L}}\left[\phi^{\CRT}_{T^\textnormal{opt}}(\srx,\sry,\srz) \mid (\sry, \srz) \in \mathcal A\right] = \mathbb E_{\bar{\mathcal L}}[\phi^{\CRT}_{T^\textnormal{opt}}(\srx, \sry, \srz)].
		\end{split}	
	\end{equation}
This completes the proof.
\end{proof}

\section{Simulation: Finite sample error control of the MX(2) $F$-test}  \label{sec:simulations}

In this section, we examine via numerical simulation the Type-I error control of the MX(2) $F$-test in finite samples, both if $\widehat g_n$ is fit out of sample (the case covered by Theorem~\ref{thm:asymptotic-alpha-level}) and if $\widehat g_n$ is fit in sample (conjectured). Code to reproduce the simulation is available online at \url{https://github.com/ekatsevi/crtpower-manuscript}.

\paragraph{Simulation setup.}

Recall that the MX(2) $F$-test is equivalent to the dCRT in finite samples when $\mathcal L_n(\prx|\prz)$ is Gaussian. Therefore, to test the Type-I error control of the MX(2) $F$-test in a nontrivial setting, we instead consider a discrete distribution for $(\prx,\prz)$. In particular, we sample $(\prx, \prz)$ from a Markov chain, as described next. (Such a Markovian setup has often been employed in MX analyses of GWAS studies \cite{SetC17, SetS19, Bates2020}, motivated by recombination models from population genetics.)

Let's assume for simplicity that $\text{dim}(\prx) = 1$. Define $(\prx,\prz) \in \{0,1\}^{1+p}$ to have the distribution of a Markov chain with 
\begin{equation*}
	\text{initial state } \prx \sim \text{Ber}(\pi_{\text{init}}) \text{ and transition matrix } \begin{pmatrix}1-\pi_\text{flip} & \pi_{\text{flip}} \\  \pi_{\text{flip}} &  1-\pi_{\text{flip}}\end{pmatrix}.
\end{equation*}
More explicitly, we have
\begin{equation*}
	\mathbb P[\prx = x, \prz = z] = \pi_{\text{init}}^{x}(1-\pi_{\text{init}})^{1-x} \pi_{\text{flip}}^{\mathbbm 1(z_1 \neq x)} (1-\pi_{\text{flip}})^{\mathbbm 1(z_1 = x)} \prod_{j = 2}^p \pi_{\text{flip}}^{\mathbbm 1(z_j \neq z_{j-1})} (1-\pi_{\text{flip}})^{\mathbbm 1(z_j = z_{j-1})}.
\end{equation*}
The parameters $(\pi_{\text{init}}, \pi_{\text{flip}})$ describe the distribution of $\prx|\prz$ and are assumed known. Furthermore, let the response $\pry$ be distributed as a random effects model in $\prz$:
\begin{equation*}
	\pry = \prz^T \bm \gamma + \peps, \quad \bm \gamma \sim N(0, \sigma^2_{\gamma}I_p),\ \peps \sim N(0, \sigma^2_\eps I_n).
\end{equation*}
Thus, all simulations are conducted under the null hypothesis $H_0: \prx \independent \pry \mid \prz$. The signal-to-noise ratio in this relationship is defined via
\begin{equation*}
	\text{SNR} = \frac{\mathbb E[\|\prz\|^2]\sigma^2_\gamma}{\sigma^2_\eps}.
\end{equation*}
Suppose we have $n_{\text{train}}$ and $n_{\text{test}}$ training and test samples, respectively. Then, the function $\widehat g_n$ is defined by running a 10-fold cross-validated ridge regression of $\sry$ on $\srz$ using either the $n_{\text{train}}$ training samples (out of sample training) or the $n_{\text{test}}$ test samples (in sample training) and then the statistic $U_n(\srx, \sry, \srz)$ is computed using the $n_{\text{test}}$ test samples.

\paragraph{Simulation parameters.}

All simulations were run with 
\begin{equation}
	n_{\text{train}} = 100; \quad \pi_{\text{init}} = 0.1; \quad \pi_{\text{flip}} = 0.1; \quad \sigma^2_\eps = 1. 
	\label{fixed-parameters}
\end{equation}
On the other hand, the three parameters $(n_{\text{test}}, \text{SNR}, p)$ were varied as follows:
\begin{equation*}
	n_{\text{test}} \in \{10, 25, \textbf{100}\}; \quad  \text{SNR} \in \{0, \textbf{1}, 5\}; \quad p \in \{20, 100, \textbf{500}\}.
\end{equation*}
The bolded values above represent the \textit{default values} for each parameter. Each of the three parameters was varied while keeping the other two parameters at their default values, giving a total of nine simulation settings. For each simulation setting, the training data were generated just once, since Theorem~\ref{thm:asymptotic-alpha-level} implicitly conditions on the training data. The entire test data $(\srx, \sry, \srz)$  were sampled 1000 times to generate the null distribution of $U_n(\srx, \sry, \srz)$. 

\paragraph{Simulation results.}

For each of the nine simulation settings, we produce normal QQ plots of the  $z$-statistics $U_n(\srx, \sry, \srz)$ based on out of sample or in sample training (Figures~\ref{fig:out-of-sample} and~\ref{fig:in-sample}, respectively). When $\widehat g_n$ is fit out of sample (Figure~\ref{fig:out-of-sample}), we see good calibration in most cases. In particular, the test sample size impacts calibration, but the SNR and the dimension do not. The test statistic's null distribution shows some inflation for the small sample size of $n_{\text{test}} = 10$, but is already well-calibrated starting with $n_{\text{test}} = 25$. Therefore, the asymptotic Type-I error control proved in Theorem~\ref{thm:asymptotic-alpha-level} extends to modest sample sizes as well. Furthermore, when $\widehat g_n$ is fit in sample (Figure~\ref{fig:in-sample}), we observe calibration that is as good as when $\widehat g_n$ is fit out of sample. This suggests that we may apply the MX(2) $F$-test even with in-sample-estimated $\widehat g_n$. We must bear in mind, however, that different choices of the fixed parameters~\eqref{fixed-parameters} may alter these conclusions. In particular, smaller $\pi_{\text{init}}$ leads to more discreteness in $\srx$ and therefore slower convergence to normality.

\clearpage

\begin{figure}[h!]
	\includegraphics[width = \textwidth]{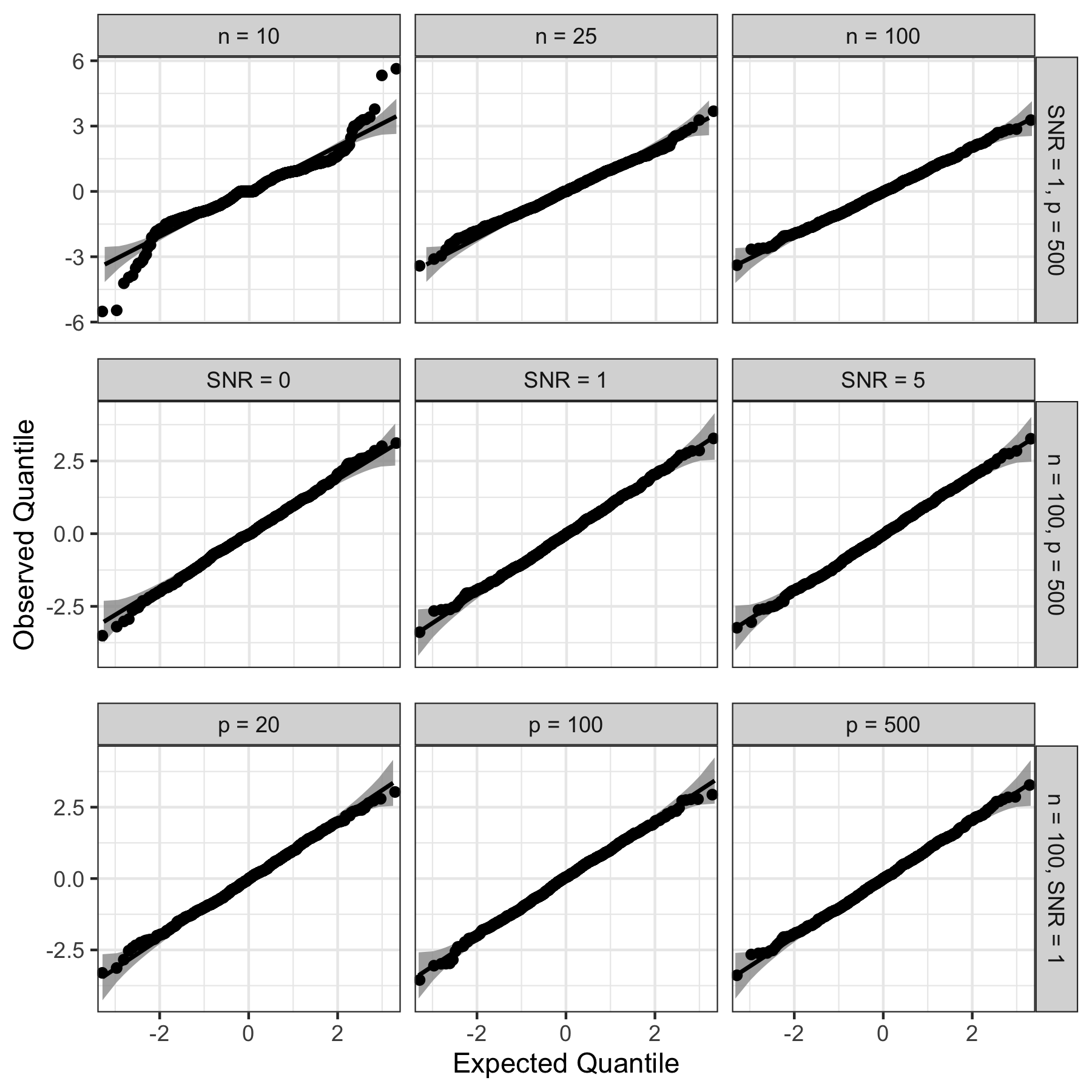}
	\caption{Distributions of 1000 samples of $U_n(\srx, \sry, \srz)$ each from nine simulation settings under the null, where $\widehat g_n$ is learned \textit{out of sample}.}
	\label{fig:out-of-sample}
\end{figure}

\clearpage

\begin{figure}[h!]
	\includegraphics[width = \textwidth]{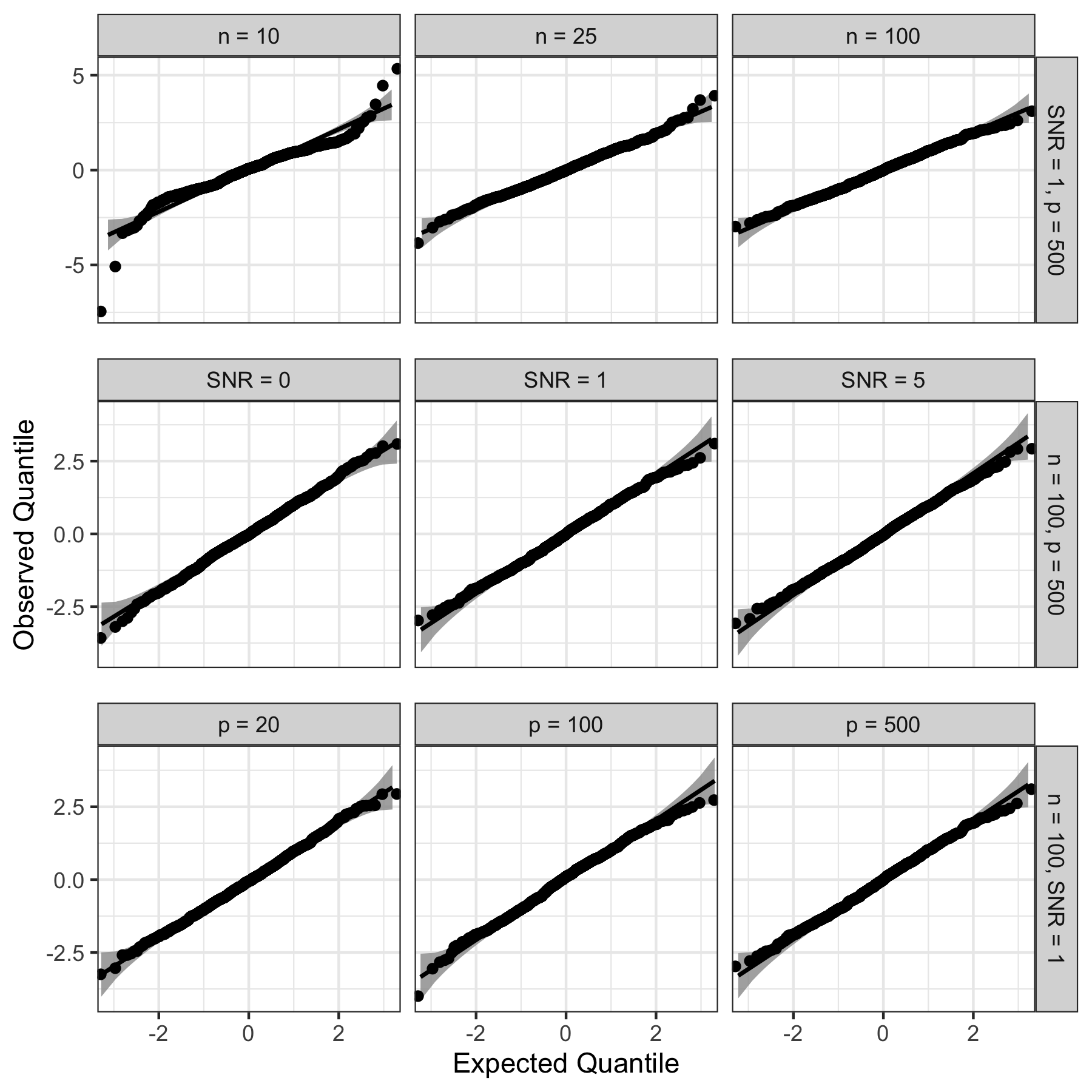}
	\caption{Distributions of 1000 samples of $U_n(\srx, \sry, \srz)$ each from nine simulation settings under the null, where $\widehat g_n$ is learned \textit{in sample}.}
	\label{fig:in-sample}
\end{figure}

\clearpage

\section{Proofs for Sections~\ref{sec:weakening} and~\ref{sec:asymptotic-power}} \label{sec:proofs-sec45}

\subsection{Proofs of main results}

\begin{proof}[Proof of Theorem~\ref{thm:equivalence}]
	
	First, conclusion~\eqref{convergence-2} of Lemma~\ref{lem:clt}---which applies because of the assumption $\mathcal L_n \in \mathscr L^{\textnormal{MX(2)}}(\mu_n(\cdot), \Sigma_n(\cdot)) \cap \mathscr L_n(c_1, c_2)$---states that for 
	\[
	\srxk^1_i, \srxk^2_i|\sry,\srz \overset{\text{ind}}\sim {\mathcal L_n(\prx|\prz = \srz_i)}, 
	\]
	we have the convergence
	\begin{equation}
		{U_n(\srxk^1, \sry, \srz) \choose U_n(\srxk^2, \sry, \srz)} \overset{\mathcal L_n}\rightarrow_d N\left({0 \choose 0},
		\begin{pmatrix}
			I_d & 0 \\
			0 & I_d
		\end{pmatrix}
		\right).
	\end{equation}
	By the continuous mapping theorem, we find that 
	\begin{equation}
		(T_n(\srxk^1, \sry, \srz), T_n(\srxk^2, \sry, \srz)) \overset{\mathcal L_n}\rightarrow_d  \chi^2_d \times \chi^2_d. 
	\end{equation}
	Since $\chi^2_d$ has a continuous and strictly increasing distribution function, we conclude using Lemma~\ref{lem:lucas} that $C_n(\sry,\srz)  \overset{\mathcal L_n}\rightarrow_p Q_{1-\alpha}[\chi^2_d] =  c_{d,1-\alpha}$, proving the statement~\eqref{eq:threshold-convergence}.
	
	Next, note that for any $\delta > 0$,
	\small
	\begin{equation*}
		\begin{split}
			&\mathbb P_{\mathcal L_n}[\phi^{N(\mu_n, \Sigma_n)}_n(\srx, \sry, \srz) \neq \phi^{\mathcal L_n}_n(\srx, \sry, \srz)] \\
			&\quad = \mathbb P_{\mathcal L_n}[\min(c_{d,1-\alpha},C_n(\sry, \srz))  < T_n(\srx, \sry, \srz) \leq \max(c_{d,1-\alpha},C_n(\sry, \srz))] \\
			&\quad=\mathbb P_{\mathcal L_n}[\min(c_{d,1-\alpha},C_n(\sry, \srz))  < T_n(\srx, \sry, \srz) \leq \max(c_{d,1-\alpha},C_n(\sry, \srz)), |C_n(\sry, \srz)-c_{d,1-\alpha}| \leq \delta] \\
			&\quad \quad +  \mathbb P_{\mathcal L_n}[\min(c_{d,1-\alpha},C_n(\sry, \srz))  < T_n(\srx, \sry, \srz) \leq \max(c_{d,1-\alpha},C_n(\sry, \srz)), |C_n(\sry, \srz)-c_{d,1-\alpha}| > \delta]\\
			&\quad\leq \mathbb P_{\mathcal L_n}[|T_n(\srx, \sry, \srz)-c_{d,1-\alpha}| \leq \delta] + \mathbb P_{\mathcal L_n}[|C_n(\sry, \srz)-c_{d,1-\alpha}| > \delta].
		\end{split}
	\end{equation*}
	\normalsize
	To justify the last step, suppose without loss of generality that $c_{d,1-\alpha} \leq C_n(\sry, \srz)$. Then, note that if $c_{d,1-\alpha} < T_n(\srx, \sry, \srz) \leq C_n(\sry, \srz)$ and $C_n(\sry, \srz)-c_{d,1-\alpha} \leq \delta$ then
	\begin{equation*}
		|T_n(\srx, \sry,\srz)-c_{d,1-\alpha}| = T_n(\srx, \sry,\srz)-c_{d,1-\alpha} \leq C_n(\sry, \srz)- c_{d,1-\alpha} \leq \delta.
	\end{equation*}
	Taking a $\limsup$ on both sides in the display before the last and using the convergence $C_n(\sry,\srz)  \overset{\mathcal L_n}\rightarrow_p c_{d,1-\alpha}$, we find that
	\begin{equation*}
		\begin{split}
			&\limsup_{n \rightarrow \infty}\ \mathbb P_{\mathcal L_n}[\phi^{N(\mu_n, \Sigma_n)}_n(\srx, \sry, \srz) \neq \phi^{\mathcal L_n}_n(\srx, \sry, \srz)] \\
			&\quad \leq \limsup_{n \rightarrow \infty}\ \mathbb P_{\mathcal L_n}[|T_n(\srx, \sry, \srz)-c_{d,1-\alpha}| \leq \delta]+ \limsup_{n \rightarrow \infty}\ \mathbb P_{\mathcal L_n}[|C_n(\sry, \srz)-c_{d,1-\alpha}| > \delta] \\
			&\quad = \limsup_{n \rightarrow \infty}\ \mathbb P_{\mathcal L_n}[|T_n(\srx, \sry, \srz)-c_{d,1-\alpha}| \leq \delta].
		\end{split}
	\end{equation*}
	Letting $\delta \rightarrow 0$ and using the assumption~\eqref{eq:non-accumulation}, we arrive at the claimed asymptotic equivalence~\eqref{eq:asymptotic-equivalence}. This completes the proof.
\end{proof}

\begin{proof}[Proof of Theorem~\ref{thm:asymptotic-alpha-level}]

Fix any sequence $\mathcal L_n \in \mathscr L_0^{\textnormal{MX(2)}}\cap \mathscr L_n(c_1, c_2)$. Because $\mathcal L_n \in \mathscr L_0$, we have $(\srx, \sry, \srz) \overset d = (\srxk, \sry, \srz)$, where $\srxk_i|\sry,\srz \overset{\text{ind}}\sim \mathcal L_n(\prx|\prz = \srz_i)$. By conclusion~\eqref{convergence-2} of Lemma~\ref{lem:clt}, which applies because $\mathcal L_n \in \mathscr L^{\textnormal{MX(2)}}(\mu_n(\cdot), \Sigma_n(\cdot)) \cap \mathscr L_n(c_1, c_2)$ by assumption, we have $U_n(\srx, \sry,\srz) \overset d = U_n(\srxk, \sry,\srz) \overset{\mathcal L_n}\rightarrow_d N(0, I_d)$. This verifies the asymptotic normality statement~\eqref{eq:asymptotic-normality}.

To show the asymptotic Type-I error control statement~\eqref{asymptotic-alpha-level}, it suffices to show that for any sequence $\mathcal L_n \in \mathscr L_0^{\textnormal{MX(2)}}\cap \mathscr L_n(c_1, c_2)$, we have
	\begin{equation}
		\limsup_{n \rightarrow \infty}\ \mathbb E_{\mathcal L_n}[\phi^{N(\mu_n, \Sigma_n)}_n(\srx, \sry, \srz)] \leq \alpha.
	\label{eq:pointwise-alpha-level}
	\end{equation}
By the continuous mapping theorem it follows from asymptotic normality~\eqref{eq:asymptotic-normality} that $T_n(\srx,\sry,\srz) = \|U_n(\srx, \sry,\srz)\|^2 \overset{\mathcal L_n}\rightarrow_d \chi^2_d$. Therefore,
	\begin{equation*}
		\lim_{n \rightarrow \infty}\ \mathbb E_{\mathcal L_n}[\phi^{N(\mu_n, \Sigma_n)}_n(\srx, \sry, \srz)] = \lim_{n \rightarrow \infty}\ \mathbb P_{\mathcal L_n}[T_n(\srx,\sry,\srz) > c_{d,1-\alpha}] = \mathbb P[\chi^2_d > c_{d,1-\alpha}] = \alpha,
	\end{equation*}
	from which the conclusion~\eqref{eq:pointwise-alpha-level} follows. This completes the proof.
\end{proof}

\begin{proof}[Proof of Theorem~\ref{thm:power}]
	
In Lemma~\ref{lem:consistency}, we show that the estimator $\widehat \rho_n$ is consistent, i.e. 
\begin{equation}
\widehat \rho_n \overset{\mathcal L_n}\rightarrow_p \overline\Sigma \beta. 
\label{estimation-consistency}
\end{equation}
Next, we derive that
\begin{equation*}
T_n(\srx, \sry, \srz) = \|\sqrt n\widehat S_n^{-1}\widehat \rho_n\|^2 = \|\sqrt n\widehat S_n^{-1}S_n S_n^{-1}\widehat \rho_n\|^2 \geq \left(\sqrt{n}\lambda_{\min}(\widehat S_n^{-1}S_n)\lambda_{\min}(S_n^{-1}) \|\widehat \rho_n\|\right)^2.
\end{equation*}
Now, we have $\widehat S_n^{-1}S_n \overset{\mathcal L_n}\rightarrow _p  I_d$ by conclusion~\eqref{eq:conv-s-neg-1} of Lemma~\ref{lem:aux}, so the continuous mapping theorem implies that $\lambda_{\min}(\widehat S_n^{-1}S_n) \overset{\mathcal L_n}\rightarrow _p 1$. Furthermore, $\inf_n \lambda_{\min}(S_n^{-1}) > 0$ by conclusion~\eqref{eq:s-n-2-limit-a} of Lemma~\ref{lem:fourth-moment}. Finally, $\|\widehat \rho_n\| \overset{\mathcal L_n}\rightarrow _p \|\overline \Sigma \beta\|$ by equation~\eqref{estimation-consistency}, and 
\[
\|\overline \Sigma \beta\| \geq \lambda_{\min}(\overline \Sigma)\|\beta\| = \|\overline \Sigma^{-1}\|^{-1}\|\beta\| \geq \left(\sup_n \|\overline \Sigma_n^{-1}\|\right)^{-1}\|\beta\| > 0,
\]
since $\beta \neq 0$ by assumption and assumptions~\eqref{eq:s-n-inverse-assump} and \eqref{eq:limits} imply that $\|\overline \Sigma^{-1}\| \leq \sup_n \|\overline \Sigma_n^{-1}\| < \infty$. Putting these facts together implies that $\sqrt{n}\lambda_{\min}(\widehat S_n^{-1}S_n)\lambda_{\min}(S_n^{-1}) \|\widehat \rho_n\|\overset{\mathcal L_n}\rightarrow _p \infty$, and therefore $T_n(\srx, \sry, \srz) \overset{\mathcal L_n}\rightarrow _p \infty$. Hence,
\begin{equation}
\begin{split}
\mathbb E_{\mathcal L_n}[\phi_n^{N(\mu_n, \Sigma_n)}(\srx, \sry, \srz)] = \mathbb P_{\mathcal L_n}[T_n(\srx, \sry, \srz) > c_{d,1-\alpha}] \rightarrow 1.
\end{split}
\end{equation}
The fact that $T_n(\srx, \sry, \srz) \overset{\mathcal L_n}\rightarrow _p \infty$ also implies that $\limsup_{n \rightarrow \infty}\ \mathbb P_{\mathcal L_n}[|T_n(\srx, \sry, \srz)-c_{d,1-\alpha}| \leq \delta] = 0$ for any $\delta > 0$. Hence, the condition~\eqref{eq:non-accumulation} of Theorem~\ref{thm:equivalence} is satisfied, so the conclusion~\eqref{eq:asymptotic-equivalence} implies that
\begin{equation*}
\lim_{n \rightarrow \infty}\mathbb E_{\mathcal L_n}[\phi_n^{\mathcal L_n}(\srx, \sry, \srz)] = \lim_{n \rightarrow \infty}\mathbb E_{\mathcal L_n}[\phi_n^{N(\mu_n, \Sigma_n)}(\srx, \sry, \srz)] = 1.
\end{equation*} 
Thus, we have shown the claimed consistency~\eqref{eq:consistency}, so we have finished the proof of part (a) of the theorem.
	
To prove part (b), we claim that it suffices to establish that
\begin{equation}
	T_n(\srx, \sry, \srz)\overset{\mathcal L_n}\rightarrow_d \chi^2_d(\|(\sigma^2I_d +\mathcal E^2)^{-1/2}\overline \Sigma^{1/2} h\|^2).
	\label{eq:convergence-of-T}
\end{equation}
Indeed, the limiting power of the MX(2) $F$-test would directly follow from this statement. To establish that the CRT has the same limiting power, by Theorem~\ref{thm:equivalence} it suffices to verify the non-accumulation condition~\eqref{eq:non-accumulation}. Letting $T$ be the limiting distribution in claim~\eqref{eq:convergence-of-T}, this claim implies that for any $\delta > 0$,
\begin{equation*}
 \lim_{n \rightarrow \infty}\ \mathbb P_{\mathcal L_n}[|T_n(\srx, \sry, \srz)-c_{d,1-\alpha}| \leq \delta] = \mathbb P[|T - c_{d,1-\alpha}| \leq \delta].
\end{equation*}
Because $T$ has a continuous distribution function, the limit above tends to zero. Therefore, it is indeed sufficient to verify the claimed convergence~\eqref{eq:convergence-of-T}. This statement, in turn, will follow if we prove that
	\begin{equation}
		U_n(\srx, \sry, \srz) \overset{\mathcal L_n}\rightarrow_d N((\overline \Sigma^{1/2}(\sigma^2I_d +\mathcal E^2)\overline \Sigma^{1/2})^{-1/2}\overline \Sigma h, I_d).
		\label{eq:U-conv}
	\end{equation}
	Indeed, note that
	\begin{equation*}
		h^T \overline \Sigma (\overline \Sigma^{1/2}(\sigma^2I_d +\mathcal E^2)\overline \Sigma^{1/2})^{-1}\overline \Sigma h = h^T \overline \Sigma^{1/2} (\sigma^2I_d +\mathcal E^2)^{-1}\overline \Sigma^{1/2} h = \|(\sigma^2I_d +\mathcal E^2)^{-1/2}\overline \Sigma^{1/2} h\|^2.
	\end{equation*}
	
	To show the statement~\eqref{eq:U-conv}, we first rewrite $U_n(\srx, \sry,\srz)$ as follows:
	\begin{equation*}
		\begin{split}
			&U_n(X,Y,Z) \\
			&\quad= \frac{\widehat S^{-1}_n}{\sqrt{n}}\sum_{i = 1}^n ((X_i - \mu_n(Z_i))^T \beta_n + \eps_i + \bar g_n(\srz_i) - \widehat g_n(\srz_i))(\srx_i - \mu_n(\srz_i)) \\
			&\quad= \frac{\widehat S^{-1}_n}{n}\sum_{i = 1}^n (\srx_i - \mu_n(\srz_i))(X_i - \mu_n(Z_i))^T h_n + \frac{\widehat S^{-1}_n}{\sqrt{n}}\sum_{i = 1}^n (Y'_i - \widehat g_n(Z_i))(\srx_i - \mu_n(\srz_i)) \\
			&\quad \equiv A_n + B_n,
		\end{split}
	\end{equation*}
	where $Y'_i \equiv \bar g_n(\srz_i) + \seps_i.$ It therefore suffices to show that
	\begin{equation}
		A_n \overset{\mathcal L_n}\rightarrow_p (\overline \Sigma^{1/2}(\sigma^2 I_d + \mathcal E^2)\overline \Sigma^{1/2})^{-1/2}\overline \Sigma h\quad \text{and} \quad B_n \overset{\mathcal L_n}\rightarrow_d N(0, I_d).
		\label{eq:sufficient}
	\end{equation}
	
	By conclusion~\eqref{eighth-moment} of Lemma~\ref{lem:fourth-moment}, there exist $c_1, c_2$ for which $\mathcal L_n \in \mathscr L(c_1, c_2)$ for each $n$. Therefore, we can apply Lemma~\ref{lem:aux} to conclude that 
	\begin{equation}
		\widehat S_n^{-1}S_n  \overset{\mathcal L_n}\rightarrow_p I_d. 
		\label{observation-1}
	\end{equation}
	By conclusion~\eqref{eq:s-n-2-limit} of Lemma~\ref{lem:fourth-moment}, we have that $S_n^2 \rightarrow \overline \Sigma^{1/2}(\sigma^2 I_d + \mathcal E^2)\overline \Sigma^{1/2}$, so 
	\begin{equation}
		S_n^{-1} \rightarrow (\overline \Sigma^{1/2}(\sigma^2 I_d + \mathcal E^2)\overline \Sigma^{1/2})^{-1/2}.
		\label{observation-2}
	\end{equation} 
	Now, we apply the WLLN to find the limit of $A_n$. Since $(\srx_i - \mu_n(\srz_i))(X_i - \mu_n(Z_i))^T$ has expectation $\overline \Sigma$ and second moment uniformly bounded by the eighth moment assumption~\eqref{eq:eighth-moment-assump-1}, we can apply the weak law of large numbers as well as the statements~\eqref{observation-1} and \eqref{observation-2} to conclude that
	\begin{equation*}
		A_n = (\widehat S^{-1}_nS_n)\frac{S_n^{-1}}{n}\sum_{i = 1}^n (\srx_i - \mu_n(\srz_i))(X_i - \mu_n(Z_i))^T h_n \overset{\mathcal L_n}\rightarrow_p (\overline \Sigma^{1/2}(\sigma^2 I_d + \mathcal E^2)\overline \Sigma^{1/2})^{-1/2}\overline \Sigma h.
	\end{equation*}
	Next, we seek to find the limit of $B_n$. Defining $\pry', S'^2_n, \mathcal L'_n$ according to \eqref{prime-definitions} below, we may rewrite
	\begin{equation}
		B_n = (\widehat S_n^{-1}S_n)(S_n^{-1}S'_n)\frac{S'^{-1}_n}{\sqrt{n}}\sum_{i = 1}^n (Y'_i - \widehat g_n(Z_i))(\srx_i - \mu_n(\srz_i)).
	\end{equation}
	By conclusion~\eqref{eq:s-n-2-limit} of Lemma~\ref{lem:fourth-moment}, we have
	\begin{equation}
		\quad S_n^{-1}S'_n \rightarrow I_d.
		\label{display-1}
	\end{equation}
	Furthermore, conclusion~\eqref{eighth-moment} of Lemma~\ref{lem:fourth-moment} gives $\mathcal L'_n \in \mathscr L^{\textnormal{MX(2)}}(\mu_n(\cdot), \Sigma_n(\cdot)) \cap \mathscr L_n(c_1, c_2)$. Therefore, $\mathcal L'_n$ satisfies the assumptions of Lemma~\ref{lem:clt}, statement~\eqref{convergence-1} of which gives
	\begin{equation}
		\begin{split}
			\frac{S'^{-1}_n}{\sqrt{n}}\sum_{i = 1}^n (Y'_i - \widehat g_n(Z_i))(\srxk_i - \mu_n(\srz_i)) \overset{\mathcal L_n}\rightarrow_d N(0, I_d).
			\label{display-2}
		\end{split}
	\end{equation}
	Furthermore, $\mathcal L'_n \in \mathscr L_0$ implies that $(\prx, \pry', \prz) \overset d = (\prxk, \pry', \prz)$, which together with the convergence~\eqref{display-2} implies that
	\begin{equation}
		\begin{split}
			\frac{S'^{-1}_n}{\sqrt{n}}\sum_{i = 1}^n (Y'_i - \widehat g_n(Z_i))(\srx_i - \mu_n(\srz_i)) \overset{\mathcal L_n}\rightarrow_d N(0, I_d).
			\label{display-3}
		\end{split}
	\end{equation}
	Finally, putting together displays~\eqref{observation-1}, \eqref{display-1} and~\eqref{display-3} yields that $B_n \overset{\mathcal L_n}\rightarrow_d N(0, I_d)$. This verifies the claimed convergences~\eqref{eq:sufficient} and therefore completes the proof.
\end{proof}

\begin{proof}[Proof of Corollary~\ref{cor:lasso}]

First we verify the statement~\eqref{eq:bm-result}. To this end, first note that
\begin{equation}
	\begin{split}
		\mathcal E_n^2 &=  \mathbb E_{\mathcal L_n}[(\widehat g_n(\prz) - \bar g_n(\prz))^2\overline \Sigma_n^{-1/2}\Sigma_n(\prz)\overline \Sigma_n^{-1/2} \mid \srx_{\text{train}}, \sry_{\text{train}}, \srz_{\text{train}}] \\
		&= \mathbb E_{\mathcal L_n}[(\widehat g_n(\prz) - g_n(\prz))^2 \mid \srx_{\text{train}}, \sry_{\text{train}}, \srz_{\text{train}}] \\
		&= \mathbb E_{\mathcal L_n}[(\widehat \gamma_{\pi n} - \gamma_n)^T \prz \prz^T (\widehat \gamma_{\pi n} - \gamma_n) \mid \srx_{\text{train}}, \sry_{\text{train}}, \srz_{\text{train}}] \\
		&= (\widehat \gamma_{\pi n} - \gamma_n)^T\mathbb E_{\mathcal L_n}[ \prz \prz^T](\widehat \gamma_{\pi n} - \gamma_n) \\
		&= \|\widehat \gamma_{\pi n} - \gamma_n\|^2.
		\label{eq:error}
	\end{split}
\end{equation}
The second equality holds because for $(\prx, \prz)$ jointly Gaussian, $\Sigma_n(\prz)$ is constant in $\prz$, so $\overline \Sigma_n^{-1/2}\Sigma_n(\prz)\overline \Sigma_n^{-1/2} = 1$.
Therefore, the variance-weighted mean-squared error $\mathcal E_n^2$ of $\widehat g_n$ reduces to the squared error in the estimate $\widehat \gamma_{\pi n}$. To obtain the limit of the latter quantity, we appeal to Bayati and Montanari's Corollary 1.6 \cite{Bayati2011}. To verify the conditions of this corollary, it suffices to verify part (b) of their Definition 1: that the empirical distribution of the noise terms $\seps'_i \equiv \sry_i - \srz_i^T \gamma_n = \srx_i \beta_n + \seps_i$ in the training set (say $1 \leq i \leq \pi n$) converges weakly to a random variable $\Lambda$ and $\frac{1}{\pi n}\sum_{i = 1}^{\pi n} \eps'^2_i \rightarrow \mathbb E[\Lambda^2]$. These statements hold almost surely in the training data by the strong law of large numbers if we assume without loss of generality that $\srx_i$ and $\seps_i$ are both defined as the first $\pi n$ elements of infinite i.i.d. sequences with distributions $N(0,1)$ and $N(0,\sigma^2)$, respectively. Therefore, Bayati and Montanari's Corollary 1.6 gives 
\begin{equation}
	\frac{1}{p}\|\sqrt{\pi n}\widehat \gamma_{\pi n} - \sqrt{\pi n}\gamma_n\|^2 \overset{\text{a.s.}}\rightarrow \pi\delta(\tau_*^2 - \sigma^2),
\end{equation}
where the almost sure statement is with respect to the training data $(\srx_{\text{train}}, \sry_{\text{train}}, \srz_{\text{train}})$. Since $\pi n/p \rightarrow \pi \delta$, we can cancel these terms from the above equation to obtain
\begin{equation}
	\|\widehat \gamma_{\pi n} - \gamma_n\|^2 \overset{\text{a.s.}}\rightarrow \tau_*^2 - \sigma^2.
	\label{eq:BM11}
\end{equation}
Putting together equations~\eqref{eq:error} and~\eqref{eq:BM11} gives the claimed statement~\eqref{eq:bm-result}. 

To apply this result, we must verify the assumptions of Theorem~\ref{thm:power}. The bounded inverse assumption~\eqref{eq:s-n-inverse-assump} holds because $\overline \Sigma_n = 1$ for all $n$ in the orthogonal design setting. The eighth moment assumption~\eqref{eq:eighth-moment-assump-1} holds due to the boundedness of the eighth moments of Gaussian random variables. To verify the moment assumption~\eqref{eq:eighth-moment-assump-2}, we note that, almost surely in the training data,
\begin{equation*}
\begin{split}
&\sup_n\ \mathbb E_{\mathcal L_n}[(\widehat g_n(\prz)-\bar g_n(\prz))^4\|\prx - \mu_n(\prz)\|^4|\srx_{\text{train}}, \sry_{\text{train}}, \srz_{\text{train}}] \\
&\quad= \sup_n\ \mathbb E_{\mathcal L_n}[(\prz \widehat \gamma_{\pi n} - \prz \gamma_n)^4\|\prx\|^4|\srx_{\text{train}}, \sry_{\text{train}}, \srz_{\text{train}}] \\
&\quad\leq \sup_n\|\widehat \gamma_{\pi n} - \gamma_n\|^4  \mathbb E_{\mathcal L_n}[\|\prz\|^4\|\prx\|^4] \\
&\quad\leq \sup_n\|\widehat \gamma_{\pi n} - \gamma_n\|^4  \mathbb E_{\mathcal L_n}[\|\prz\|^8]^{1/2} \mathbb E_{\mathcal L_n}[\|\prx\|^8]^{1/2} \\
&\quad < \infty.
\end{split}
\end{equation*}
The last inequality holds because $\|\widehat \gamma_{\pi n} - \gamma_n\|^4$ has a finite limit according to~\eqref{eq:BM11} and because $\prz$ and $\prx$ have bounded eighth moments since they are Gaussian. Finally, we verify assumption~\eqref{eq:limits} by noting that $\overline \Sigma_n\rightarrow \overline \Sigma \equiv 1$ and $\mathcal E_n^2 \rightarrow \tau_*^2 - \sigma^2 \equiv \mathcal E$, the latter by statement~\eqref{eq:bm-result}. Therefore, Theorem~\ref{thm:power} gives 
\begin{equation*}
	\begin{split}
		\mathbb E_{\mathcal L_n}[\phi^{\mathcal L_n}_{(1-\pi)n}(\srx_{\text{test}}, \sry_{\text{test}}, \srz_{\text{test}})|\srx_{\text{train}}, \sry_{\text{train}}, \srz_{\text{train}}] \overset{\text{a.s.}}\rightarrow \mathbb P[\chi^2_1(\|\tau_*^{-1} h\sqrt{1-\pi}\|^2) > c_{1,1-\alpha}].
	\end{split}
\end{equation*}
The extra factor of $\sqrt{1-\pi}$ reflects the fact that a sample size of $(1-\pi)n$ is used for testing, so $\beta_n = h_n/\sqrt{n} = h_n\sqrt{1-\pi}/\sqrt{(1-\pi)n}$. In other words, reducing the number of samples for testing from $n$ to $(1-\pi)n$ has the effect of reducing the alternative signal strength from $h_n$ to $h_n \sqrt{1-\pi}$. Noting that $c_{1,1-\alpha} = z^2_{1-\alpha/2}$, we conclude using the dominated convergence theorem that
\begin{equation*}
	\begin{split}
		\mathbb E_{\mathcal L_n}[\varphi^{\mathcal L_n}_{n}(\srx, \sry, \srz)] &=  
		\mathbb E_{\mathcal L_n}\left[\mathbb E_{\mathcal L_n}[\phi^{\mathcal L_n}_{(1-\pi)n}(\srx_{\text{test}}, \sry_{\text{test}}, \srz_{\text{test}})|\srx_{\text{train}}, \sry_{\text{train}}, \srz_{\text{train}}]\right] \\
		&\rightarrow \mathbb P[|N(\tau_*^{-1} h\sqrt{1-\pi},1)| > z_{1-\alpha/2}].
	\end{split}
\end{equation*}
This completes the proof of the corollary.
\end{proof}

\subsection{Technical lemmas}

First, we state a lemma that gives a sufficient condition for the convergence of the CRT threshold, which follows directly from  Lemmas 2 and 3 of \cite{Wang2020b}.
\begin{lemma}[\cite{Wang2020b}] \label{lem:lucas}
	Let $\mathcal L_n$ be a sequence of laws over $(\prx,\pry,\prz)$, from which $(\srx,\sry,\srz)$ are sampled. Furthermore, for each $i$ sample two independent copies
	\begin{equation}
		\srxk_i^1,\srxk_i^2 \overset{\text{i.i.d.}}\sim \mathcal L_n(\prx|\prz = \srz_i) \quad \text{such that, given } \srz,  (\srxk_1^1,\srxk_1^2) \independent \cdots \independent (\srxk_n^1,\srxk_n^2) \independent \sry.
		\label{eq:two-resamples}
	\end{equation}
	Suppose that $T_n(\srx,\sry,\srz)$ is a test statistic satisfying 
	\begin{equation}
		(T_n(\srxk^1,\sry,\srz), T_n(\srxk^2, \sry,\srz)) \overset{\mathcal L_n}\rightarrow_d  \widetilde T \times \widetilde T
	\end{equation}
	for some limiting random variable $\widetilde T$ with continuous and strictly increasing distribution function. Then, the CRT threshold converges in probability to the upper quantile of $\widetilde T$:
	\begin{equation}
		C_n(\sry, \srz) \equiv Q_{1-\alpha}[T_n(\srxk,\sry,\srz)|\sry,\srz]  \overset{\mathcal L_n}\rightarrow_p Q_{1-\alpha}[\widetilde T].
	\end{equation}
	
\end{lemma}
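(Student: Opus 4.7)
The plan is to first show that the conditional CDF of $T_n(\srxk,\sry,\srz)$ given $(\sry,\srz)$ converges in probability, at each point, to the limiting CDF of $\widetilde T$, and then to translate this pointwise convergence of CDFs into convergence of their quantiles using the assumed continuity and strict monotonicity of the limit.

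Write $F_n(t \mid y,z) \equiv \mathbb P_{\mathcal L_n}[T_n(\srxk,\sry,\srz) \le t \mid \sry=y,\srz=z]$ and $F(t) \equiv \mathbb P[\widetilde T \le t]$. Fix any $t \in \mathbb R$. The marginal convergence $T_n(\srxk^1,\sry,\srz) \to_d \widetilde T$ (read off from the assumed joint convergence) yields
\begin{equation*}
\mathbb E_{\mathcal L_n}[F_n(t \mid \sry,\srz)] \;=\; \mathbb P_{\mathcal L_n}[T_n(\srxk^1,\sry,\srz) \le t] \;\longrightarrow\; F(t),
\end{equation*}
since $F$ is continuous. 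The key step is to exploit the \emph{two} independent resamples $\srxk^1,\srxk^2$ drawn conditionally on $\srz$ (independently of $\sry$): by construction $T_n(\srxk^1,\sry,\srz)$ and $T_n(\srxk^2,\sry,\srz)$ are conditionally independent given $(\sry,\srz)$, so
\begin{equation*}
\mathbb E_{\mathcal L_n}[F_n(t \mid \sry,\srz)^2] \;=\; \mathbb P_{\mathcal L_n}[T_n(\srxk^1,\sry,\srz) \le t,\; T_n(\srxk^2,\sry,\srz) \le t] \;\longrightarrow\; F(t)^2,
\end{equation*}
by the assumed joint convergence. Subtracting the square of the first display shows $\operatorname{Var}_{\mathcal L_n}[F_n(t \mid \sry,\srz)] \to 0$, and combined with the mean convergence this gives
\begin{equation*}
F_n(t \mid \sry,\srz) \;\overset{\mathcal L_n}{\longrightarrow}_p\; F(t) \qquad \text{for every } t \in \mathbb R.
\end{equation*}

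Let $q \equiv Q_{1-\alpha}[\widetilde T]$ and fix $\varepsilon>0$. Since $F$ is continuous and strictly increasing, $F(q-\varepsilon) < 1-\alpha < F(q+\varepsilon)$. Applying the pointwise convergence at $t = q\pm\varepsilon$, with probability tending to one we have simultaneously $F_n(q-\varepsilon\mid\sry,\srz) < 1-\alpha$ and $F_n(q+\varepsilon\mid\sry,\srz) > 1-\alpha$. By monotonicity of CDFs and the definition $C_n(\sry,\srz) = \inf\{t : F_n(t \mid \sry,\srz) \ge 1-\alpha\}$, these two inequalities force $q-\varepsilon \le C_n(\sry,\srz) \le q+\varepsilon$ on that event. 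Since $\varepsilon>0$ was arbitrary, $C_n(\sry,\srz) \to_p q$, as claimed.

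The only delicate point is the second-moment identity for $F_n$, which crucially relies on having two independent conditional resamples (not just one); without the joint convergence assumption, the marginal convergence of $T_n(\srxk^1,\sry,\srz)$ alone would only control $\mathbb E[F_n(t\mid\sry,\srz)]$ and leave its variance unconstrained. Everything else is a routine argument converting pointwise CDF convergence into quantile convergence under a continuous strictly increasing limit.
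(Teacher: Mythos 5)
Your proof is correct. Note that the paper does not prove this lemma itself---it defers to Lemmas 2 and 3 of the cited reference \cite{Wang2020b}---and your argument is exactly the mechanism underlying those lemmas: the two conditionally independent resamples let you identify $\mathbb E[F_n(t\mid \sry,\srz)]$ and $\mathbb E[F_n(t\mid \sry,\srz)^2]$ with the marginal and joint tail probabilities, whose limits $F(t)$ and $F(t)^2$ force the conditional CDF to concentrate, after which continuity and strict monotonicity of $F$ give quantile convergence. No gaps.
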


\begin{lemma}\label{lem:aux}
	
	Fix any $c_1, c_2 > 0$. For any sequence
	\begin{equation}
		\mathcal L_n \in \mathscr L^{\textnormal{MX(2)}}(\mu_n(\cdot), \Sigma_n(\cdot)) \cap \mathscr L_n(c_1, c_2),
	\end{equation}
	we have
	\begin{equation}
		\widehat S_n^2 - S_n^2 \overset{\mathcal L_n}\rightarrow_p  0
		\label{eq:conv-s-2}
	\end{equation}
	and
	\begin{equation}
		\widehat S_n^{-1}S_n \overset{\mathcal L_n}\rightarrow_p I_d.
		\label{eq:conv-s-neg-1}
	\end{equation}
	
	\begin{proof}
		To show the first convergence~\eqref{eq:conv-s-2}, we apply the WLLN to the triangular array $\{(\sry_{i} - \widehat g_n(\srz_{i}))^2\Sigma_n(\srz_i)\}_{i,n}$. We first verify the second moment condition:
		\begin{equation}
			\begin{split}
				&\sup_{n}\ \mathbb E_{\mathcal L_n}[\|(\pry - \widehat g_n(\prz))^2\Sigma_n(\prz)\|^2] \\
				&\quad= \sup_{n}\ \mathbb E_{\mathcal L_n}[(\pry - \widehat g_n(\prz))^4\|\Sigma_n(\prz)\|^2] \\
				&\quad\leq \sup_{n}\ \mathbb E_{\mathcal L_n}[(\pry - \widehat g_n(\prz))^4\mathbb E_{\mathcal L_n}[\|\prx - \mu_n(\prz)\|^2|\prz]^2] \\
				&\quad\leq \sup_{n}\ \mathbb E_{\mathcal L_n}[(\pry - \widehat g_n(\prz))^4\mathbb E_{\mathcal L_n}[\|\prx - \mu_n(\prz)\|^4|\prz]] \\
				&\quad \leq  c_2 < \infty.
			\end{split}
			\label{eq:eighth-moment-calculation}
		\end{equation} 
		Therefore, by the WLLN we obtain the convergence
		\begin{equation}
			\widehat S_n^{2} - S_n^2 =  \frac{1}{n}\sum_{i = 1}^n (\sry_{i} - \widehat g_n(\srz_{i}))^2\Sigma_n(\srz_i) - \mathbb E_{\mathcal L_n}[(\pry - \widehat g_n(\prz))^2\Sigma_n(\prz)]\overset{\mathcal L_n}\rightarrow_p 0.
		\end{equation}
		
		To show the second convergence~\eqref{eq:conv-s-neg-1}, note first that 
		\begin{equation}
			\begin{split}
				\sup_n\ \|S_n^2\| &= \sup_n\ \|\mathbb E_{\mathcal L_n}[(\pry - \widehat g_n(\prz))^2\Sigma_n(\prz)]\| \\
				&\leq  \sup_n\ \mathbb E_{\mathcal L_n}[\|(\pry - \widehat g_n(\prz))^2\Sigma_n(\prz)\|^2]^{1/2} \leq c_2^{1/2},
			\end{split}
		\end{equation}
		the last step having been derived in equation~\eqref{eq:eighth-moment-calculation}. Therefore, for every $n$, we have
		\begin{equation}
			S_n^2 \in \mathcal S \equiv \{S^2: \|S^{-1}\| \leq c_1, \|S^2\| \leq c_2^{1/2}\}.
		\end{equation}
		Since $\mathcal S$ is a compact subset of the open set of positive definite matrices, there exists a $\delta > 0$ such that
		$\mathcal S_\delta = \{S^2: \|S^2-S_0^2\| \leq \delta \text{ for some } S_0^2 \in \mathcal S\}$ is also a compact subset of the set of positive definite matrices. Since the function $S^2 \mapsto S^{-1}$ is continuous on the compact set $\mathcal S_\delta$, it must be uniformly continuous on this set as well. Fix $\gamma > 0$. By uniform continuity, there exists an $\eta > 0$ such that $\|S^2_1 - S^2_2\| \leq \eta$ implies that $\|S^{-1}_1-S^{-1}_2\| \leq \gamma$ for all $S_1^2, S_2^2 \in \mathcal S_\delta$. We therefore have that
		\begin{equation*}
			\begin{split}
				\mathbb P_{\mathcal L_n}[\|\widehat S_n^{-1} - S_n^{-1}\| > \gamma] &= \mathbb P_{\mathcal L_n}[\|\widehat S_n^{-1} - S_n^{-1}\| > \gamma, \widehat S_n^2 \in \mathcal S_\delta] +  \mathbb P_{\mathcal L_n}[\|\widehat S_n^{-1} - S_n^{-1}\| > \gamma, \widehat S_n^2 \not \in \mathcal S_\delta] \\
				&\leq \mathbb P_{\mathcal L_n}[\|\widehat S_n^{2} - S_n^{2}\| > \eta] +  \mathbb P_{\mathcal L_n}[\widehat S_n^2 \not \in \mathcal S_\delta] \\
				&\leq \mathbb P_{\mathcal L_n}[\|\widehat S_n^{2} - S_n^{2}\| > \eta] +  \mathbb P_{\mathcal L_n}[\|\widehat S_n^2-S_n^2\| > \delta].
			\end{split}
		\end{equation*}
		Using the convergence~\eqref{eq:conv-s-2}, we find that the last expression tends to zero as $n \rightarrow \infty$, from which it follows that $\mathbb P_{\mathcal L_n}[\|\widehat S_n^{-1} - S_n^{-1}\| > \gamma]  \rightarrow 0$ as $n \rightarrow \infty$. Therefore,
		\begin{equation*}
			\widehat S_n^{-1} - S_n^{-1}  \overset{\mathcal L_n}\rightarrow_p 0.
		\end{equation*}
		Multiplying this relation on the right by the bounded quantity $S_n$, we arrive at the statement~\eqref{eq:conv-s-neg-1}, which concludes the proof.
	\end{proof}
	
\end{lemma}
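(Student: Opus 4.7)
The plan is to prove the two convergences in sequence: first establish convergence of $\widehat S_n^2$ to $S_n^2$ in operator norm via a weak law of large numbers for triangular arrays, and then transfer this to convergence of $\widehat S_n^{-1}S_n$ by exploiting uniform continuity of matrix inversion on compact subsets of positive definite matrices.

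For part~\eqref{eq:conv-s-2}, note that $\widehat S_n^2$ is an empirical mean of i.i.d.\ copies of $(\pry - \widehat g_n(\prz))^2 \Sigma_n(\prz)$ whose population mean equals $S_n^2$ by definition. To apply the WLLN uniformly across the triangular array indexed by $n$, I would verify a uniform second moment bound $\sup_n \mathbb E_{\mathcal L_n}\|(\pry - \widehat g_n(\prz))^2 \Sigma_n(\prz)\|^2 < \infty$. The key estimates are $\|\Sigma_n(\prz)\| \leq \mathbb E_{\mathcal L_n}[\|\prx - \mu_n(\prz)\|^2|\prz]$ (operator norm dominated by trace for PSD matrices) and then Jensen's inequality $\mathbb E[\|\prx - \mu_n(\prz)\|^2|\prz]^2 \leq \mathbb E[\|\prx - \mu_n(\prz)\|^4|\prz]$, which together with the moment hypothesis $\mathscr L_n(c_1,c_2)$ reduces the bound to $c_2$.

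For part~\eqref{eq:conv-s-neg-1}, the strategy is to first get $\|\widehat S_n^{-1} - S_n^{-1}\| \to_p 0$ and then multiply by $S_n$, which is bounded in operator norm (again by the moment hypothesis, since $\|S_n^2\| \leq \mathbb E[\|(\pry-\widehat g_n(\prz))^2 \Sigma_n(\prz)\|^2]^{1/2} \leq c_2^{1/2}$). To go from operator norm convergence of $\widehat S_n^2 - S_n^2$ to operator norm convergence of $\widehat S_n^{-1} - S_n^{-1}$, I would observe that the $S_n^2$ lie in the compact set $\mathcal S = \{A \succ 0 : \|A^{-1}\| \leq c_1,\ \|A\| \leq c_2^{1/2}\}$, and then enlarge slightly to a compact neighborhood $\mathcal S_\delta$ still contained in the open set of positive definite matrices. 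Since matrix inversion is continuous, it is uniformly continuous on $\mathcal S_\delta$; given $\gamma > 0$, pick $\eta > 0$ so that $\|A_1 - A_2\| \leq \eta$ with $A_1, A_2 \in \mathcal S_\delta$ forces $\|A_1^{-1/2} - A_2^{-1/2}\| \leq \gamma$. Then partition the event $\{\|\widehat S_n^{-1} - S_n^{-1}\| > \gamma\}$ according to whether $\widehat S_n^2 \in \mathcal S_\delta$ or not, bounding each piece by $\mathbb P[\|\widehat S_n^2 - S_n^2\| > \min(\eta, \delta)]$, which tends to zero by part~\eqref{eq:conv-s-2}.

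The main subtlety, and the step I expect to require the most care, is part~\eqref{eq:conv-s-neg-1}: matrix inversion is discontinuous at singular matrices, so one cannot simply apply the continuous mapping theorem. Confining $S_n^2$ to a compact set of well-conditioned positive definite matrices (this is where the uniform bound $\|S_n^{-1}\| \leq c_1$ from $\mathscr L_n(c_1,c_2)$ is essential) and then controlling the excursions of $\widehat S_n^2$ outside this set is what makes the argument work. Everything else—the WLLN, the submultiplicativity of operator norms, and the final multiplication by $S_n$—is routine.
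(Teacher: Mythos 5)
Your proposal is correct and follows essentially the same route as the paper's proof: the WLLN for the triangular array with the trace/Jensen bound reducing to $c_2$ in part~\eqref{eq:conv-s-2}, and the compactness-plus-uniform-continuity argument for the map $S^2 \mapsto S^{-1}$ on the enlarged set $\mathcal S_\delta$, followed by multiplication by the bounded $S_n$, in part~\eqref{eq:conv-s-neg-1}. The only cosmetic difference is that you merge the two tail probabilities into a single bound with $\min(\eta,\delta)$ where the paper keeps them separate.
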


\begin{lemma}\label{lem:clt}
	Consider generating $(\srxk^1, \srxk^2, \sry, \srz)$ according to~\eqref{eq:two-resamples} for a sequence of laws
	\begin{equation}
		\mathcal L_n \in \mathscr L^{\textnormal{MX(2)}}(\mu_n(\cdot), \Sigma_n(\cdot)) \cap \mathscr L_n(c_1, c_2).
	\end{equation}
	We have 
	\begin{equation}
		n^{-1/2}
		\begin{pmatrix}
			S_n^{-1} & 0 \\
			0 & S_n^{-1}
		\end{pmatrix}\sum_{i = 1}^n (\sry_{i} - \widehat g_n(\srz_{i})){\srxk^1_{i} - \mu_n(\srz_i) \choose \srxk^2_{i} - \mu_n(\srz_i)} \overset{\mathcal L_n}\rightarrow_d N\left({0 \choose 0}, \begin{pmatrix}
			I_d & 0 \\
			0 & I_d
		\end{pmatrix}
		\right)
		\label{convergence-1}
	\end{equation}
	and
	\begin{equation}
		{U_n(\srxk^1, \sry, \srz) \choose U_n(\srxk^2, \sry, \srz)} \overset{\mathcal L_n}\rightarrow_d N\left({0 \choose 0},
		\begin{pmatrix}
			I_d & 0 \\
			0 & I_d
		\end{pmatrix}
		\right).
		\label{convergence-2}
	\end{equation}
\end{lemma}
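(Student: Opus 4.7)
The plan is to establish the first convergence~\eqref{convergence-1} via Lyapunov's CLT applied to a triangular array of independent (across $i$) $2d$-dimensional summands, and then deduce the second convergence~\eqref{convergence-2} by combining this with Lemma~\ref{lem:aux} and Slutsky's theorem.

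First I would unpack the structure of the summands. For each $n$ and $i \leq n$, set
\[
W_{n,i} \equiv n^{-1/2}(\sry_i - \widehat g_n(\srz_i))
\begin{pmatrix} S_n^{-1}(\srxk^1_i - \mu_n(\srz_i)) \\ S_n^{-1}(\srxk^2_i - \mu_n(\srz_i)) \end{pmatrix}.
\]
These are independent across $i$: the i.i.d.\ draws $(\sry_i,\srz_i)$ from Setting~\ref{setting:asymptotic} together with the conditional construction~\eqref{eq:two-resamples} (independent resamples given $\srz$, independent of $\sry$) make the quadruples $(\sry_i,\srz_i,\srxk^1_i,\srxk^2_i)$ independent across $i$. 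The tower property together with $\srxk^k_i \independent \sry_i \mid \srz_i$, which gives $\mathbb E_{\mathcal L_n}[\srxk^k_i - \mu_n(\srz_i) \mid \sry,\srz]=0$, yields $\mathbb E_{\mathcal L_n}[W_{n,i}]=0$. The diagonal blocks of $\text{Var}_{\mathcal L_n}[\sum_i W_{n,i}]$ reduce, via tower and the MX(2) identity $\text{Var}_{\mathcal L_n}[\srxk^k_i \mid \srz_i] = \Sigma_n(\srz_i)$, to $S_n^{-1} S_n^2 S_n^{-1} = I_d$, while the off-diagonal block vanishes because $\srxk^1_i$ and $\srxk^2_i$ are conditionally independent given $(\sry,\srz)$, forcing their conditional cross-covariance to be zero.

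Next I would verify Lyapunov's condition at order four. Using $(\|a\|^2+\|b\|^2)^2 \leq 2(\|a\|^4+\|b\|^4)$, the uniform bound $\|S_n^{-1}\| \leq c_1$, and the distributional match $\mathcal L_n(\srxk^k_i \mid \srz_i) = \mathcal L_n(\prx \mid \prz = \srz_i)$ built into the construction, one obtains
\[
\sum_{i=1}^n \mathbb E_{\mathcal L_n}[\|W_{n,i}\|^4] \leq C\, n^{-1} c_1^4\, \mathbb E_{\mathcal L_n}\!\left[(\sry - \widehat g_n(\prz))^4\, \mathbb E_{\mathcal L_n}[\|\prx - \mu_n(\prz)\|^4 \mid \prz]\right] \leq C c_1^4 c_2 / n \to 0,
\]
the inner moment bound coming from the definition of $\mathscr L_n(c_1,c_2)$. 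Lyapunov's CLT then delivers~\eqref{convergence-1}.

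For~\eqref{convergence-2}, the crucial observation is that $\widehat S_n$, defined in Section~\ref{sec:mx2-f-test}, is a function of $(\sry,\srz)$ alone and hence is unchanged when $\srx$ is replaced by either $\srxk^1$ or $\srxk^2$. Writing
\[
{U_n(\srxk^1, \sry, \srz) \choose U_n(\srxk^2, \sry, \srz)} = \begin{pmatrix} \widehat S_n^{-1} S_n & 0 \\ 0 & \widehat S_n^{-1} S_n \end{pmatrix} \cdot n^{-1/2} \begin{pmatrix} S_n^{-1} & 0 \\ 0 & S_n^{-1} \end{pmatrix}\sum_{i=1}^n (\sry_i - \widehat g_n(\srz_i)) {\srxk^1_i - \mu_n(\srz_i) \choose \srxk^2_i - \mu_n(\srz_i)},
\]
the prefactor converges in probability to $I_{2d}$ by conclusion~\eqref{eq:conv-s-neg-1} of Lemma~\ref{lem:aux}, while the second factor converges in distribution to $N(0,I_{2d})$ by~\eqref{convergence-1}; Slutsky's theorem then yields~\eqref{convergence-2}. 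There is no deep obstruction here; the main care needed is bookkeeping, namely checking that the resampled $\srxk^k_i$ inherit the MX(2) conditional moment bounds of $\prx \mid \prz$, and recognizing that the conditional independence of the two resamples given $(\sry,\srz)$ is precisely what makes the limit covariance block-diagonal rather than correlated.
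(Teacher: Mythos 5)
Your proof is correct and follows essentially the same route as the paper's: a Lyapunov CLT for the triangular array of independent $2d$-dimensional summands (with the block-diagonal limit covariance coming from the conditional independence of the two resamples), followed by the factorization through $\widehat S_n^{-1}S_n \to_p I_d$ (Lemma~\ref{lem:aux}) and Slutsky to get~\eqref{convergence-2}. The only difference is cosmetic: you verify Lyapunov at order four, which uses the $c_2$ bound in $\mathscr L_n(c_1,c_2)$ directly, whereas the paper verifies it at order three and reaches the same bound via an extra H\"older step.
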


\begin{proof}[Proof of Lemma~\ref{lem:clt}]
	
	Note that
	\begin{equation*}
		{U_n(\srxk^1, \sry, \srz) \choose U_n(\srxk^2, \sry, \srz)} = \begin{pmatrix}
			\widehat S_n^{-1}S_n & 0 \\
			0 & \widehat S_n^{-1}S_n
		\end{pmatrix} \cdot n^{-1/2}
		\begin{pmatrix}
			S_n^{-1} & 0 \\
			0 & S_n^{-1}
		\end{pmatrix}\sum_{i = 1}^n(\sry_{i} - \widehat g_n(\srz_{i})) {\srxk^1_{i} - \mu_n(\srz_i) \choose \srxk^2_{i} - \mu_n(\srz_i)}.
	\end{equation*}
	By Lemma~\ref{lem:aux}, we have that $\widehat S_n^{-1}S_n \overset{\mathcal L_n}\rightarrow_p I_d$, so by Slutsky we find that the second statement~\eqref{convergence-2} follows from the first~\eqref{convergence-1}. Therefore, it suffices to prove the latter convergence. To this end, we apply the CLT to the triangular array of vectors
	\begin{equation}
		\left\{ (\sry_{i} - \widehat g_n(\srz_{i}))\begin{pmatrix}
			S_n^{-1} & 0 \\
			0 & S_n^{-1}
		\end{pmatrix}{\srxk^1_{i} - \mu_n(\srz_i) \choose \srxk^2_{i} - \mu_n(\srz_i)}\right\}_{i,n}. 
	\end{equation}
	To apply the CLT, we first verify the Lyapunov condition with $\delta = 1$: 
	\begin{equation}
		\begin{split}
			&\sup_{n}\ \mathbb E_{\mathcal L_n}\left[\left\|(\pry- \widehat g_n(\prz))
			\begin{pmatrix}
				S_n^{-1} & 0 \\
				0 & S_n^{-1}
			\end{pmatrix}{\prxk^1 - \mu_n(\prz) \choose \prxk^2 - \mu_n(\prz)}\right\|^3\right] \\
			&\quad \leq \sup_{n}\ \|S_n^{-1}\|^3 \mathbb E_{\mathcal L_n}\left[|\pry- \widehat g_n(\prz)|^3(\|\prxk^1 - \mu_n(\prz)\|^2 + \|\prxk^2 - \mu_n(\prz)\|^2)^{3/2}\right] \\
			&\quad \leq \sup_{n}\ \|S_n^{-1}\|^3 \mathbb E_{\mathcal L_n}\left[|\pry- \widehat g_n(\prz)|^3C\left(\|\prxk^1 - \mu_n(\prz)\|^3 + \|\prxk^2 - \mu_n(\prz)\|^3\right)\right] \\
			&\quad = 2C\sup_{n}\ \|S_n^{-1}\|^3 \mathbb E_{\mathcal L_n}\left[|\pry- \widehat g_n(\prz)|^3\|\prxk^1 - \mu_n(\prz)\|^3\right] \\
			&\quad \leq 2C\sup_{n}\ \|S_n^{-1}\|^3 \mathbb E_{\mathcal L_n}\left[(\pry- \widehat g_n(\prz))^4\|\prxk^1 - \mu_n(\prz)\|^4\right]^{3/4} \\
			&\quad = 2C\sup_{n}\ \|S_n^{-1}\|^3 \mathbb E_{\mathcal L_n}\left[(\pry- \widehat g_n(\prz))^4\mathbb E_{\mathcal L_n}[\|\prxk^1 - \mu_n(\prz)\|^4|\prz]\right]^{3/4} \\
			&\quad = 2C\sup_{n}\ \|S_n^{-1}\|^3 \mathbb E_{\mathcal L_n}\left[(\pry- \widehat g_n(\prz))^4\mathbb E_{\mathcal L_n}[\|\prx - \mu_n(\prz)\|^4|\prz]\right]^{3/4} \\
			&\quad \leq 2C c_1^3 c_2^{3/4} < \infty.
		\end{split}
	\end{equation}
	Here $C$ is chosen such that $(a+b)^{3/2} \leq C(a^{3/2} + b^{3/2})$ for all $a,b \geq 0$.
	Next, it is easy to verify that
	\begin{equation}
		\mathbb E_{\mathcal L_n}\left[(\pry - \widehat g_n(\prz))\begin{pmatrix}
			S_n^{-1} & 0 \\
			0 & S_n^{-1}
		\end{pmatrix}{\prxk^1 - \mu_n(\prz) \choose \prxk^2 - \mu_n(\prz)}\right] = {0 \choose 0}
	\end{equation}
	and
	\begin{equation}
		\text{Var}_{\mathcal L_n}\left[(\pry - \widehat g_n(\prz))\begin{pmatrix}
			S_n^{-1} & 0 \\
			0 & S_n^{-1}
		\end{pmatrix}{\prxk^1 - \mu_n(\prz) \choose \prxk^2 - \mu_n(\prz)}\right] = \begin{pmatrix}
			I_d & 0 \\
			0 & I_d
		\end{pmatrix}.
	\end{equation}
	By the CLT, the convergence~\eqref{convergence-1} now follows.
\end{proof}

\begin{lemma} \label{lem:consistency}

In the setting of Theorem~\ref{thm:power}(a), define 
\begin{equation*}
\rho_n \equiv \mathbb E_{\mathcal L_n}[\textnormal{Cov}_{\mathcal L_n}[\prx,\pry|\prz]] = \overline \Sigma_n \beta \quad \text{and} \quad \rho \equiv \lim_{n \rightarrow \infty} \rho_n = \overline \Sigma \beta.
\end{equation*}
Under the assumptions of Theorem~\ref{thm:power}, the estimator $\widehat \rho_n$ defined in \eqref{rho-hat} is consistent for $\rho$:
	\begin{equation}
	\widehat \rho_n \overset{\mathcal L_n}\rightarrow _p \rho = \overline \Sigma \beta.
	\label{eq:estimation-consistency}
\end{equation}

\end{lemma}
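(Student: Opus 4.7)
The plan is to use the semiparametric form of the alternative to decompose $(Y_i - \widehat g_n(Z_i))(X_i - \mu_n(Z_i))$ into three summands, and then apply a triangular-array weak law of large numbers (WLLN) to each. Using~\eqref{eq:linearity} with $\beta_n = \beta$, we have
\begin{equation*}
Y_i - \widehat g_n(Z_i) = (X_i - \mu_n(Z_i))^T \beta + (g_n(Z_i) - \widehat g_n(Z_i)) + \epsilon_i,
\end{equation*}
so multiplying by $X_i - \mu_n(Z_i)$ and averaging yields $\widehat \rho_n = A_n \beta + B_n + C_n$, where
\begin{equation*}
A_n \equiv \tfrac{1}{n}\sum_i (X_i - \mu_n(Z_i))(X_i - \mu_n(Z_i))^T, \quad B_n \equiv \tfrac{1}{n}\sum_i (g_n(Z_i) - \widehat g_n(Z_i))(X_i - \mu_n(Z_i)),
\end{equation*}
and $C_n \equiv \tfrac{1}{n}\sum_i \epsilon_i (X_i - \mu_n(Z_i))$.

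Next I would show $A_n \overset{\mathcal L_n}\to_p \overline \Sigma$ and $B_n, C_n \overset{\mathcal L_n}\to_p 0$ by verifying that each summand has uniformly bounded second moment so that the triangular-array WLLN applies. For $A_n$, the summand has mean $\overline \Sigma_n \to \overline \Sigma$ (by assumption~\eqref{eq:limits}) and its squared norm is controlled by $\mathbb E_{\mathcal L_n}[\|X - \mu_n(Z)\|^4]$, which is uniformly bounded by~\eqref{eq:eighth-moment-assump-1} (indeed, the eighth moment suffices by Jensen). For $B_n$, the summand has mean zero by the tower property (conditioning on $Z$ and using $\mathbb E_{\mathcal L_n}[X - \mu_n(Z)|Z] = 0$), and Cauchy--Schwarz together with~\eqref{eq:eighth-moment-assump-2} bound its second moment uniformly. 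For $C_n$, independence of $\epsilon_i$ from $(X_i, Z_i)$ gives mean zero and second moment $\sigma^2 \mathbb E_{\mathcal L_n}[\|X - \mu_n(Z)\|^2]$, which is again uniformly bounded by~\eqref{eq:eighth-moment-assump-1}.

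Combining these three convergences via Slutsky's theorem then gives $\widehat \rho_n \overset{\mathcal L_n}\to_p \overline \Sigma \beta = \rho$, as desired. The only step requiring any care is the tower-property argument for $\mathbb E_{\mathcal L_n}[B_n] = 0$, which relies on the fact that the MX(2) assumption fixes $\mathbb E_{\mathcal L_n}[X \mid Z] = \mu_n(Z)$; the other computations are routine once the moment bounds~\eqref{eq:eighth-moment-assump-1}--\eqref{eq:eighth-moment-assump-2} are invoked. No single step is a substantial obstacle; the moment assumptions have been tailored precisely to make this decomposition succeed.
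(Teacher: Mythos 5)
Your proposal is correct and follows essentially the same route as the paper: decompose $\sry_i - \widehat g_n(\srz_i)$ via the semiparametric model, compute the mean $\overline\Sigma_n\beta$ using $\mathbb E_{\mathcal L_n}[\prx - \mu_n(\prz)\mid\prz]=0$ and the independence of $\peps$, bound second moments via assumptions~\eqref{eq:eighth-moment-assump-1}--\eqref{eq:eighth-moment-assump-2}, and invoke the triangular-array WLLN. The only cosmetic difference is that the paper applies the WLLN once to the full average $\widehat\rho_n$ rather than separately to your three pieces $A_n\beta$, $B_n$, $C_n$.
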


\begin{proof}

Note that $\widehat \rho_n$ is the mean of i.i.d. terms with expectation 
\begin{equation}
	\begin{split}
		&\mathbb E_{\mathcal L_n}[(\pry - \widehat g_n(\prz))(\prx - \mu_n(\prz))]  \\
		&\quad = \mathbb E_{\mathcal L_n}[((\prx - \mu_n(\prz))^T\beta + \peps + \bar g_n(\prz) - \widehat g_n(\prz))(\prx - \mu_n(\prz))] = \overline \Sigma_n \beta.
	\end{split}
\end{equation}
These terms also have bounded second moment, since
\begin{equation}
	\begin{split}
		&\mathbb E_{\mathcal L_n}[\|(\pry - \widehat g_n(\prz))(\prx - \mu_n(\prz))\|^2] \\
		&\quad= 	
		\mathbb E_{\mathcal L_n}[((\prx - \mu_n(\prz))^T\beta + \peps + \bar g_n(\prz) - \widehat g_n(\prz))^2\|\prx - \mu_n(\prz)\|^2] \\
		&\quad\leq
		C\mathbb E_{\mathcal L_n}[(\|\prx - \mu_n(\prz)\|^2\|\beta\|^2 + \peps^2 + (\bar g_n(\prz) - \widehat g_n(\prz))^2)\|\prx - \mu_n(\prz)\|^2] \\
		&\quad=
		C\|\beta\|^2\mathbb E_{\mathcal L_n}[\|\prx - \mu_n(\prz)\|^4] + C\sigma^2\mathbb E_{\mathcal L_n}[\|\prx - \mu_n(\prz)\|^2] \\
		&\quad \quad + C\mathbb E_{\mathcal L_n}[(\bar g_n(\prz) - \widehat g_n(\prz))^2\|\prx - \mu_n(\prz)\|^2].
	\end{split}
\end{equation}	
Here, $C$ is a constant so that $(a+b+c)^2 \leq C(a^2 + b^2 + c^2)$ for any $a, b, c \geq 0$.
Taking a supremum over $n$ and using the assumptions~\eqref{eq:eighth-moment-assump-1} and~\eqref{eq:eighth-moment-assump-2} yields
\begin{equation}
	\sup_n\ \mathbb E_{\mathcal L_n}[\|(\pry - \widehat g_n(\prz))(\prx - \mu_n(\prz))\|^2] < \infty.
\end{equation}
Therefore, the weak law of large numbers implies that
\begin{equation}
	\widehat \rho_n - \overline \Sigma_n \beta \overset{\mathcal L_n}\rightarrow _p  0,
\end{equation}
from which the statement~\eqref{eq:estimation-consistency} follows by the assumed convergence $\overline \Sigma_n \rightarrow \overline \Sigma$. 
	
\end{proof}

\begin{lemma} \label{lem:fourth-moment}
	In the setting of Theorem~\ref{thm:power}, define
	\begin{equation}
		\pry' \equiv \bar g_n(\prz) + \peps, \quad S'^2_{n} \equiv \mathbb E_{\mathcal L_n}[(\pry' - \widehat g_n(\prz))^2 \Sigma_n(\prz)], \quad \text{and} \quad \mathcal L'_n \equiv \mathcal L_n(\prx, \pry', \prz).
		\label{prime-definitions}
	\end{equation}
	Under the assumptions of Theorem~\ref{thm:power}(a) or~\ref{thm:power}(b), 
	\begin{equation}
		\text{there exist } c_1, c_2 > 0 \text{ such that } \mathcal L_n, \mathcal L'_n \in \mathscr L(c_1, c_2).
		\label{eighth-moment}
	\end{equation} 
	Under the assumptions of Theorem~\ref{thm:power}(a), we have
	\begin{equation}
	\inf_{n}\ \lambda_{\min}(S_n^{-1}) > 0,
	\label{eq:s-n-2-limit-a}
	\end{equation}
while under the assumptions of Theorem~\ref{thm:power}(b), we have
	\begin{equation}
		\lim_{n \rightarrow \infty} S_n^2 = \lim_{n \rightarrow \infty}  S'^2_n = \overline \Sigma^{1/2}(\sigma^2 I_d + \mathcal E^2)\overline \Sigma^{1/2}.
		\label{eq:s-n-2-limit}
	\end{equation}
	
\end{lemma}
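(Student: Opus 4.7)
The plan is to handle all three parts of the lemma via a single algebraic decomposition of the residual coming from Setting~\ref{setting:semiparametric}, namely
$$\pry - \widehat g_n(\prz) = (\prx - \mu_n(\prz))^T \beta_n + (g_n - \widehat g_n)(\prz) + \peps,$$
together with the conditional second-moment identity
$$\mathbb E_{\mathcal L_n}[(\pry - \widehat g_n(\prz))^2 \mid \prz] = \sigma^2 + (g_n - \widehat g_n)(\prz)^2 + \beta_n^T \Sigma_n(\prz) \beta_n,$$
in which all cross terms vanish because $\peps \independent (\prx, \prz)$ with $\mathbb E[\peps] = 0$, and because $\mathbb E[\prx - \mu_n(\prz) \mid \prz] = 0$. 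The analogous identity for $\mathcal L'_n$ drops the $\beta_n$ term and is the workhorse for every $S'^2_n$ bound.

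For \eqref{eighth-moment}, two bounds must be verified. First, plugging the conditional identity into the quadratic form $v^T S_n^2 v = \mathbb E_{\mathcal L_n}[(\pry - \widehat g_n)^2 v^T \Sigma_n(\prz) v]$ yields $v^T S_n^2 v \geq \sigma^2 v^T \overline \Sigma_n v \geq \sigma^2 / \|\overline \Sigma_n^{-1}\|$, which together with assumption~\eqref{eq:s-n-inverse-assump} gives a uniform bound $\sup_n \|S_n^{-1}\| < \infty$; the identical calculation works for $S'^{-1}_n$. For the fourth-moment bound, applying $(a+b+c)^4 \leq 27(a^4 + b^4 + c^4)$ to the decomposition reduces $\mathbb E[(\pry - \widehat g_n)^4 \mathbb E[\|\prx - \mu_n\|^4 \mid \prz]]$ to three pieces: the $\beta_n$ piece is bounded by $\|\beta_n\|^4 \mathbb E[\|\prx - \mu_n\|^8]$ via Jensen and \eqref{eq:eighth-moment-assump-1}; the $(g_n - \widehat g_n)^4$ piece collapses via the tower property to $\mathbb E[(g_n - \widehat g_n)^4 \|\prx - \mu_n\|^4]$, bounded by \eqref{eq:eighth-moment-assump-2}; and the $\peps^4$ piece factors by independence into $\mathbb E[\peps^4] \cdot \mathbb E[\|\prx - \mu_n\|^4]$, both bounded.

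Statement \eqref{eq:s-n-2-limit-a} rests on the same identity, which gives the three-term decomposition
$$S_n^2 = \sigma^2 \overline \Sigma_n + \mathbb E_{\mathcal L_n}[(g_n - \widehat g_n)^2 \Sigma_n(\prz)] + \mathbb E_{\mathcal L_n}[\beta^T \Sigma_n(\prz) \beta \cdot \Sigma_n(\prz)].$$
Since $\lambda_{\min}(S_n^{-1}) = 1/\sqrt{\lambda_{\max}(S_n^2)}$, it suffices to show $\sup_n \|S_n^2\| < \infty$. The first summand is bounded by $\sigma^2 \mathbb E[\|\prx - \mu_n\|^2]$ and the third by $\|\beta\|^2 \mathbb E[\|\prx - \mu_n\|^4]$, both controlled by \eqref{eq:eighth-moment-assump-1}; the middle summand's operator norm is at most $\mathbb E[(g_n - \widehat g_n)^2 \|\prx - \mu_n\|^2]$, which via $\mathbb E[X] \leq \sqrt{\mathbb E[X^2]}$ is bounded by $\sqrt{\mathbb E[(g_n - \widehat g_n)^4 \|\prx - \mu_n\|^4]}$ and hence by \eqref{eq:eighth-moment-assump-2}.

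For the convergence~\eqref{eq:s-n-2-limit}, I would apply the same three-term decomposition under local alternatives $\beta_n = h_n/\sqrt{n}$: the $\beta_n$ summand has operator norm $O(\|\beta_n\|^2) = O(1/n)$ and vanishes, while the first summand tends to $\sigma^2 \overline \Sigma$ by \eqref{eq:limits}. The crucial step is the identity
$$\mathbb E_{\mathcal L_n}[(g_n - \widehat g_n)^2 \Sigma_n(\prz)] = \overline \Sigma_n^{1/2} \mathcal E_n^2 \overline \Sigma_n^{1/2},$$
obtained by factoring $\overline \Sigma_n^{-1/2}$ out of the definition of $\mathcal E_n^2$, which together with~\eqref{eq:limits} delivers the middle summand's limit $\overline \Sigma^{1/2} \mathcal E^2 \overline \Sigma^{1/2}$. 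Summing the three limits gives the claimed $\overline \Sigma^{1/2}(\sigma^2 I_d + \mathcal E^2) \overline \Sigma^{1/2}$, and the argument for $S'^2_n$ is identical once the vanishing $\beta_n$ summand is dropped from the start. The main conceptual step, though modest, is spotting this identity tying $\mathcal E_n^2$ to $\mathbb E[(g_n - \widehat g_n)^2 \Sigma_n(\prz)]$, since it is what drives the noncentrality parameter in Theorem~\ref{thm:power}(b); everything else is routine Cauchy--Schwarz, Jensen, and moment bookkeeping.
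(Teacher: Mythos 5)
Your proposal is correct and follows essentially the same route as the paper: the same residual decomposition with vanishing cross terms, the same identity $\mathbb E_{\mathcal L_n}[(g_n(\prz)-\widehat g_n(\prz))^2\Sigma_n(\prz)] = \overline\Sigma_n^{1/2}\mathcal E_n^2\overline\Sigma_n^{1/2}$ driving the limit, the lower bound $S_n^2 \succcurlyeq \sigma^2\overline\Sigma_n$ combined with assumption~\eqref{eq:s-n-inverse-assump}, and the same $(a+b+c)^4$ and Cauchy--Schwarz/Jensen moment bookkeeping. The only cosmetic difference is that for $\sup_n\|S_n^2\|<\infty$ you bound the $(g_n-\widehat g_n)^2$ summand directly via~\eqref{eq:eighth-moment-assump-2}, whereas the paper notes that $\overline\Sigma_n^{1/2}(\sigma^2 I_d+\mathcal E_n^2)\overline\Sigma_n^{1/2}$ is a convergent, hence bounded, sequence.
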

\begin{proof}
First, we show that under the assumptions of Theorem~\ref{thm:power}(a) or~\ref{thm:power}(b), we have $\mathcal L_n \in \mathscr L(c_1, c_2)$ for some $c_1, c_2 > 0$. It suffices to show that
\begin{equation}
\sup_{n} \|S_n^{-1}\| < \infty
\label{eq:bounded-inverse}
\end{equation}
and
\begin{equation}
\sup_n\ \mathbb E_{\mathcal L_n}\left[(\pry - \widehat g_n(\prz))^{4} \mathbb E_{\mathcal L_n}[\|\prx - \mu_n(\prz)\|^{4}|\prz]\right] < \infty.
\label{eq:bounded-eighth-moment}
\end{equation}
To show the statement~\eqref{eq:bounded-inverse}, first note that
\begin{equation}
	\begin{split}
		S_n^2 &= \mathbb E_{\mathcal L_n}[(\pry - \widehat g_n(\prz))^2 \Sigma_n(\prz)] \\
		&=   \mathbb E_{\mathcal L_n}[((\prx - \mu_n(\prz))^T \beta_n + \pry'-\widehat g_n(\prz))^2 \Sigma_n(\prz)] \\
		&= \mathbb E_{\mathcal L_n}[((\prx - \mu_n(\prz))^T \beta_n)^2\Sigma_n(\prz)]  \\
		&\quad+2\mathbb E_{\mathcal L_n}[(\prx - \mu_n(\prz))^T \beta_n( \pry'-\widehat g_n(\prz))\Sigma_n(\prz)] \\
		&\quad + \mathbb E_{\mathcal L_n}[( \pry'-\widehat g_n(\prz))^2 \Sigma_n(\prz)] \\
		&= \mathbb E_{\mathcal L_n}[((\prx - \mu_n(\prz))^T \beta_n)^2\Sigma_n(\prz)] + \mathbb E_{\mathcal L_n}[(\pry'-\widehat g_n(\prz))^2 \Sigma_n(\prz)],
		\label{eq:s-n-2}
	\end{split}
\end{equation}
where in the last step we used the fact that
\begin{equation*}
	\begin{split}
		&\mathbb E_{\mathcal L_n}[(\prx - \mu_n(\prz))^T \beta_n( \pry'-\widehat g_n(\prz))\Sigma_n(\prz)] \\
		&\quad=\mathbb E_{\mathcal L_n}[\mathbb E_{\mathcal L_n}[(\prx - \mu_n(\prz))|\prz]^T \beta_n( \bar g_n(\prz)-\widehat g_n(\prz))\Sigma_n(\prz)] = 0. \\
	\end{split}
\end{equation*}
Furthermore, 
\begin{equation}
	\begin{split}
		S'^2_n &= \mathbb E_{\mathcal L_n}[(\pry'-\widehat g_n(\prz))^2 \Sigma_n(\prz)] \\
		&=\mathbb E_{\mathcal L_n}[( \peps + \bar g_n(\prz)-\widehat g_n(\prz))^2 \Sigma_n(\prz)] \\
		& = \mathbb E_{\mathcal L_n}[\peps ^2 \Sigma_n(\prz)] + \mathbb E_{\mathcal L_n}[2\peps(\bar g_n(\prz)-\widehat g_n(\prz))\Sigma_n(\prz)]  \\
		&\quad + \mathbb E_{\mathcal L_n}[(\bar g_n(\prz)-\widehat g_n(\prz))^2\Sigma_n(\prz)] \\
		&= \sigma^2 \overline \Sigma_n + \overline \Sigma_n^{1/2}\mathcal E^2_n \overline \Sigma_n^{1/2}  = \overline \Sigma_n^{1/2}(\sigma^2 I_d + \mathcal E^2_n) \overline \Sigma_n^{1/2}.
		\label{eq:s-n-2-prime}
	\end{split}
\end{equation}
It follows that $S_n^2 \succcurlyeq \sigma^2 \overline \Sigma_n$, which together with assumption~\eqref{eq:s-n-inverse-assump} implies that
\begin{equation}
\sup_n \|S_n^{-1}\| \leq \sup_n \|\sigma^{-1}\overline \Sigma_n^{-1/2}\| = \sigma^{-1}\left(\sup_n \|\overline \Sigma_n^{-1}\|\right)^{1/2} < \infty.
\end{equation}
This verifies statement~\eqref{eq:bounded-inverse}.	To prove statement~\eqref{eq:bounded-eighth-moment}, we write
	\begin{equation*}
		\begin{split}
			&\mathbb E_{\mathcal L_n}\left[ (\pry - \widehat g_n(\prz))^{4}\mathbb E_{\mathcal L_n}[\|\prx - \mu_n(\prz)\|^{4}|\prz]\right] \\
			&\quad= \mathbb E_{\mathcal L_n}\left[ ((\prx - \mu_n(\prz))^T \beta_n + \peps + \bar g_n(\prz)-\widehat g_n(\prz))^4\mathbb E_{\mathcal L_n}[\|\prx - \mu_n(\prz)\|^{4}|\prz]\right] \\
			&\quad\leq C\mathbb E_{\mathcal L_n}\left[(((\prx - \mu_n(\prz))^T \beta_n)^4 + \peps^4 + (\bar g_n(\prz)-\widehat g_n(\prz))^4) \mathbb E_{\mathcal L_n}[\|\prx - \mu_n(\prz)\|^{4}|\prz]\right] \\
			&\quad\leq C\|\beta_n\|^4\mathbb E_{\mathcal L_n}[\|\prx - \mu_n(\prz)\|^{8}] + 3C\sigma^4\mathbb E_{\mathcal L_n}[\|\prx - \mu_n(\prz)\|^{4}] \\
			&\quad \quad + C \mathbb E_{\mathcal L_n}\left[(\bar g_n(\prz) - \widehat g_n(\prz))^{4} \|\prx - \mu_n(\prz)\|^{4}\right].
		\end{split}
	\end{equation*}
	Here, $C$ a constant such that $(a + b + c)^4 \leq C(a^4 + b^4 + c^4)$ for all $a,b,c \geq 0$. Taking a supremum over $n$ and using the moment assumptions~\eqref{eq:eighth-moment-assump-1} and \eqref{eq:eighth-moment-assump-2} along with the boundedness of the sequence $\beta_n$ yields the statement~\eqref{eq:bounded-eighth-moment}. 
	
	Therefore, we have verified that $\mathcal L_n \in \mathscr L(c_1, c_2)$ for some $c_1, c_2$ under the assumptions of Theorem~\ref{thm:power}(a) or~\ref{thm:power}(b). The fact that $\mathcal L'_n \in \mathscr L(c_1, c_2)$ under these assumptions follows by a similar argument (omitted for the sake of brevity), which finishes the proof of statement \eqref{eighth-moment}.
	
	Next, we turn to proving the claim~\eqref{eq:s-n-2-limit-a}. Using calculations~\eqref{eq:s-n-2} and~\eqref{eq:s-n-2-prime}, we write
	\begin{equation}
	S_n^2 = \mathbb E_{\mathcal L_n}[((\prx - \mu_n(\prz))^T \beta)^2\Sigma_n(\prz)] + \overline \Sigma_n^{1/2}(\sigma^2 I_d + \mathcal E^2_n) \overline \Sigma_n^{1/2}.
\end{equation}
	Note that 
	\begin{equation}
		\begin{split}
			&\sup_n\ \|\mathbb E_{\mathcal L_n}[((\prx - \mu_n(\prz))^T \beta)^2\Sigma_n(\prz)]\| \\
			&\quad\leq \sup_n\ \|\beta\|^2\mathbb E_{\mathcal L_n}[\|\prx - \mu_n(\prz)\|^2\mathbb E_{\mathcal L_n}[\|\prx - \mu_n(\prz)\|^2|\prz]] \\
			&\quad= \sup_n\ \|\beta\|^2\mathbb E_{\mathcal L_n}[\mathbb E_{\mathcal L_n}[\|\prx - \mu_n(\prz)\|^2|\prz]^2] \\
			&\quad\leq \sup_n\ \|\beta\|^2\mathbb E_{\mathcal L_n}[\mathbb E_{\mathcal L_n}[\|\prx - \mu_n(\prz)\|^4|\prz]] \\
			&\quad\leq \sup_n\ \|\beta\|^2\mathbb E_{\mathcal L_n}[\|\prx - \mu_n(\prz)\|^4] < \infty, \\
			\label{eq:boundedness}
		\end{split}
	\end{equation}
	the last step using the eighth moment bound~\eqref{eq:eighth-moment-assump-1}. Furthermore, 
	\begin{equation}
	\sup_n \ \|\overline \Sigma_n^{1/2}(\sigma^2 I_d + \mathcal E^2_n) \overline \Sigma_n^{1/2}\| < \infty
	\end{equation}
	because $\overline \Sigma_n^{1/2}(\sigma^2 I_d + \mathcal E^2_n) \overline \Sigma_n^{1/2}$ is a convergent sequence by assumption. Hence, $\sup_n \|S_n^2\| < \infty$ and therefore
	\begin{equation*}
	\inf_n \lambda_{\min}(S_n^{-1}) = \inf_n \|S_n\|^{-1} = \inf_n \|S_n^2\|^{-1/2} = \left(\sup_n \|S_n^2\|\right)^{-1/2} > 0.
	\end{equation*}
	This completes the proof of claim~\eqref{eq:s-n-2-limit-a}.	
	
	Finally, we turn to proving claim~\eqref{eq:s-n-2-limit}. The claimed convergence of $S'^2_n$ follows immediately from the derivation~\eqref{eq:s-n-2-prime} and the assumption~\eqref{eq:limits}. To show that $S_n^2$ has the same limit, note that the derivation~\eqref{eq:s-n-2} implies that 
	\begin{equation*}
		S_n^2 - S'^2_n	= \mathbb E_{\mathcal L_n}[((\prx - \mu_n(\prz))^T \beta_n)^2\Sigma_n(\prz)] = \frac{1}{n}\mathbb E_{\mathcal L_n}[((\prx - \mu_n(\prz))^T h_n)^2\Sigma_n(\prz)].
	\end{equation*}
	The boundedness of the quantity $\mathbb E_{\mathcal L_n}[((\prx - \mu_n(\prz))^T h_n)^2\Sigma_n(\prz)]$ follows by an argument analagous to that in equation~\eqref{eq:boundedness}, which shows that
	\begin{equation*}
		S_n^2 - S'^2_n \rightarrow 0.
	\end{equation*}	
	This completes the proof of statement~\eqref{eq:s-n-2-limit}, so we are done.
\end{proof}

\section{Proofs for Section~\ref{sec:knockoffs}} \label{sec:knockoffs-proofs}

\begin{proof}[Proof of Theorem~\ref{prop:knockoff-optimality}]
	Let us denote
	\begin{equation*}
	[\srx, \srxk]_{?} \equiv (\{\srx_j, \srxk_j\}, \srx_{-j}, \srxk_{-j}),
	\end{equation*}
	where $\{\srx_j, \srxk_j\}$ represents the \textit{unordered} pair. In other words, $[\srx, \srxk]_{?}$ specifies $[\srx, \srxk]$ up to a swap, hence the ``?" notation: 
	\begin{equation*}
	[\srx, \srxk]_{?} = [\sfx, \sfxk]_{?} \quad \Longleftrightarrow \quad [\srx, \srxk] \in \{[\sfx, \sfxk], [\sfx, \sfxk]_{\text{swap}(j)}\}.
	\end{equation*}
	With this notation, we claim that
	\begin{equation}
	T_j^{\textnormal{opt}} \in \underset{T_j}{\arg \max}\ \mathbb P\left[\left. T_j([\srx, \srxk], \sry) > T_j([\srx, \srxk]_{\text{swap}(j)}, \sry)\ \right|\ [\srx, \srxk]_? = [\sfx, \sfxk]_?, \sry = \sfy\right] 
	\label{knockoff-conditional-optimality}
	\end{equation}
	for every $([\sfx, \sfxk], \sfy)$ in the set 
	\begin{equation}
	\mathcal A \equiv \left\{([\sfx, \sfxk], \sfy): T^{\text{opt}}_j([\sfx, \sfxk], \sfy) \neq T^{\text{opt}}_j([\sfx, \sfxk]_{\text{swap}(j)}, \sfy)\right\}. 
	\label{nondegeneracy}
	\end{equation}
	The conclusion~\eqref{unconditional-knockoff-optimality} will follow because for any $T_j$, 
	\begin{equation*}
	\begin{split}
	&\mathbb P[T_j([\srx, \srxk], \sry) > T_j([\srx, \srxk]_{\text{swap}(j)}, \sry)] \\
	&= \mathbb P[T_j([\srx, \srxk], \sry) > T_j([\srx, \srxk]_{\text{swap}(j)}, \sry), \srx_j \neq \srxk_j] \\
	&= \mathbb P[T_j([\srx, \srxk], \sry) > T_j([\srx, \srxk]_{\text{swap}(j)}, \sry), ([\srx, \srxk], \sry) \in \mathcal A] \\
	&= \mathbb P\left[\left.T_j([\srx, \srxk], \sry) > T_j([\srx, \srxk]_{\text{swap}(j)}, \sry)\right| ([\srx, \srxk], \sry) \in \mathcal A\right]\mathbb P[([\srx, \srxk], \sry) \in \mathcal A] \\
	&= \mathbb E\left[\left.\mathbb P\left[\left.T_j([\srx, \srxk], \sry) > T_j([\srx, \srxk]_{\text{swap}(j)}, \sry)\right| [\srx, \srxk]_?, \sry\right]\right|([\srx, \srxk], \sry) \in \mathcal A\right]\mathbb P[([\srx, \srxk], \sry) \in \mathcal A] \\
	&\leq \mathbb E\left[\left.\mathbb P\left[\left.T^{\text{opt}}_j([\srx, \srxk], \sry) > T^{\text{opt}}_j([\srx, \srxk]_{\text{swap}(j)}, \sry)\right| [\srx, \srxk]_?, \sry\right]\right|([\srx, \srxk], \sry) \in \mathcal A\right]\mathbb P[([\srx, \srxk], \sry) \in \mathcal A] \\
	&=\mathbb P\left[T^{\text{opt}}_j([\srx, \srxk], \sry) > T^{\text{opt}}_j([\srx, \srxk]_{\text{swap}(j)}, \sry)\right].
	\end{split}
	\end{equation*}
	The first step holds because $T_j([\srx, \srxk], \sry) > T_j([\srx, \srxk]_{\text{swap}(j)}, \sry)$ implies that $\srx_j \neq \srxk_j$, the second by the assumption~\eqref{nondegeneracy-assumption}, the third and fourth by probability manipulations, the fifth by the claimed conditional optimality~\eqref{knockoff-conditional-optimality}, and the sixth by the same logic as the first four steps.

	To prove equation~\eqref{knockoff-conditional-optimality}, fix $([\sfx, \sfxk], \sfy) \in \mathcal A$. Consider the simple hypothesis testing problem
	\begin{equation}
	H_0: (\srx_j, \srxk_j) = (\sfxk_j, \sfx_j) \quad \text{versus} \quad H_1: (\srx_j, \srxk_j) = (\sfx_j, \sfxk_j),
	\label{knockoffs-simple}
	\end{equation}
	where $(\srx_j, \srxk_j)$ are endowed with their law conditional on 
	\begin{equation*}
	([\srx, \srxk]_?, \sry) = ([\sfx, \sfxk]_?, \sfy).
	\end{equation*}
	We seek the most powerful test of level $\alpha = 1/2$. Note that under the null distribution, the knockoff exchangeability property makes both events equally likely: $\mathbb P_0[(\srx_j, \srxk_j) = (\sfx_j, \sfxk_j)] = \mathbb P_0[(\srx_j, \srxk_j) = (\sfxk_j, \sfx_j)] = 1/2$. Therefore, given any statistic $T_j$, the level 1/2 test of the simple hypothesis~\eqref{knockoffs-simple} rejects when $T_j([\srx, \srxk], \sry) > T_j([\srx, \srxk]_{\text{swap}(j)}, \sry)$. The knockoff statistic $T^{\text{opt}}$ defined in equation~\eqref{knockoff-conditional-optimality} thus coincides with the most powerful test for the hypothesis~\eqref{knockoffs-simple}, which by Neyman-Pearson is given by
	\begin{equation*}
	\begin{split}
	&T^{\text{opt}}_j([\sfx, \sfxk], \sfy) \\
	&\quad= \frac{\mathbb P\left[\left.(\srx_j, \srxk_j) = (\sfx_j, \sfxk_j)\right|[\srx, \srxk]_? = [\sfx, \sfxk]_?, \sry = \sfy\right]}{\mathbb P\left[\left.(\srx_j, \srxk_j) = (\sfxk_j, \sfx_j)\right|[\srx, \srxk]_? = [\sfx, \sfxk]_?, \sry = \sfy\right]} \\
	&\quad= \frac{\mathbb P\left[\left.(\srx_j, \srxk_j) = (\sfx_j, \sfxk_j)\right| [\srx, \srxk]_? = [\sfx, \sfxk]_?\right]\mathbb P\left[\sry = \sfy\left|[\srx, \srxk] = [\sfx,\sfxk]\right.\right]}{\mathbb P\left[\left.(\srx_j, \srxk_j) = (\sfxk_j, \sfx_j)\right| [\srx, \srxk]_? = [\sfx, \sfxk]_?\right]\mathbb P\left[\sry = \sfy\left|[\srx, \srxk] = [\sfx,\sfxk]_{\text{swap}(j)}\right.\right]} \\
	&\quad= \frac{\mathbb P\left[\sry = \sfy\left|[\srx, \srxk] = [\sfx,\sfxk]\right.\right]}{\mathbb P\left[\sry = \sfy\left|[\srx, \srxk] = [\sfx,\sfxk]_{\text{swap}(j)}\right.\right]} = \frac{\mathbb P\left[\sry = \sfy\left|\srx_j = \sfx_j, \srx_{-j} = \sfx_{-j}\right.\right]}{\mathbb P\left[\sry = \sfy\left|\srx_j = \sfxk_j, \srx_{-j} = \sfx_{-j}\right.\right]}.
	\end{split}
	\end{equation*}
	The first step is given by Neyman-Pearson, the second by an application of Bayes rule, the third by the conditional exchangeability of knockoffs~\eqref{conditional-exchangeability}, and the last by the conditional independence of knockoffs~\eqref{knockoff-conditional-independence}. Finally, it is easy to verify that
	\begin{equation*}
	\begin{split}
	&T_j^{\text{opt}}([\srx, \srxk], \sry) > T_j^{\text{opt}}([\srx, \srxk]_{\text{swap}(j)}, \sry) \quad \Longleftrightarrow \\
	& \mathbb P[\sry = \sfy|\srx_j = \sfx_j, \srx_{-j} = \sfx_{-j}] >  \mathbb P[\sry = \sfy|\srx_j = \sfxk_j, \srx_{-j} = \sfx_{-j}],
	\end{split}
	\end{equation*}
	from which we conclude that the likelihood given in equation~\eqref{log-likelihood-ratio-knockoffs} is optimal for the problem~\eqref{conditional-optimality}. This completes the proof.
\end{proof}


\begin{proof}[Proof of Proposition~\ref{prop:nondegeneracy-knockoffs}]
	
	Suppose $\prx_{j}|\prx_{-j}, \prxk$ has a density with respect to the Lebesgue measure. Since
	\begin{equation*}
	\begin{split}
	&\mathbb P[T^{\textnormal{opt}}_j([\srx, \srxk], \sry) = T^{\textnormal{opt}}_j([\srx, \srxk]_{\textnormal{swap}(j)}, \sry), \srx_{\bullet,j} \neq \srxk_{\bullet,j}] \\
	&\quad= \mathbb E[\mathbb P[T^{\textnormal{opt}}_j([\srx, \srxk], \sry) = T^{\textnormal{opt}}_j([\srx, \srxk]_{\textnormal{swap}(j)}, \sry), \srx_{\bullet,j} \neq \srxk_{\bullet,j}\ |\ \srx_{\bullet,-j}, \sry, \srxk]],
	\end{split}
	\end{equation*}
	it suffices to show that 
	\begin{equation*}
	\mathbb P[T^{\textnormal{opt}}_j([\srx, \srxk], \sry) = T^{\textnormal{opt}}_j([\srx, \srxk]_{\textnormal{swap}(j)}, \sry)\ |\ \srx_{\bullet,-j}, \sry, \srxk] = 0
	\end{equation*}
	for all $\srx_{\bullet,-j}, \sry, \srxk_j$. Since $\mathcal L(\prx_j|\prx_{-j}, \prxk)$ has a density with respect to the Lebesgue measure, so do $\mathcal L(\prx_j|\pry, \prx_{-j}, \prxk)$ and $\mathcal L(\srx_j|\sry,  \srx_{\bullet,-j}, \srxk)$. Therefore, it suffices to show that the set
	\begin{equation*}
	S(c; \sfx_{\bullet,-j}, \sfy) \equiv \{x_{\bullet,j} : \mathbb P(\sry = \sfy|\srx_{\bullet,j} = \sfx_{\bullet,j}, \srx_{\bullet,-j} = \sfx_{\bullet,-j}) = c\} \subseteq \mathbb R^{n}
	\end{equation*}
	has Lebesgue measure zero for all $c, \sfx_{\bullet,-j}, \sfy$. To see this, note that if $\sfx_{\bullet,j} \in S(c; \sfx_{\bullet,-j}, \sfy)$, then
	\begin{equation*}
	\begin{split}
	c &= \mathbb P(\sry = \sfy|\srx_{\bullet,j} = \sfx_{\bullet,j}, \srx_{\bullet,-j} = \sfx_{\bullet,-j})\\
	&= \prod_{i = 1}^n \exp(\eta_i \sfy_i - \psi(\eta_i))g_0(\sfy_i) \\
	&= \exp\left(\sum_{i = 1}^n (x_{ij}\beta_j  + f_{-j}(\sfx_{i,-j}))\sfy_i - \psi(\sfx_{ij}\beta_j  + f_{-j}(\sfx_{i,-j})) + \log g_0(\sfy_i) \right).
	\end{split}
	\end{equation*}
	It follows that
	\begin{equation}
	\begin{split}
	&S(c; \sfx_{\bullet,-j}, \sfy) \\
	&= \left\{x_{\bullet,j}: \sum_{i = 1}^n [\sfx_{ij}\beta_j \sfy_i  - \psi(\sfx_{ij} \beta_j + f_{-j}(\sfx_{i,-j}))] =  \log c- \sum_{i = 1}^n [f_{-j}(\sfx_{i,-j})\sfy_i  + \log g_0(\sfy_i)]\right\}. 
	\end{split}
	\label{likelihood-expression}
	\end{equation}
	Since $\psi$ is strictly convex and $\beta_j \neq 0$, the left hand side is a strictly concave function of $x_{\bullet,j}$, while the right hand side is a constant (with respect to $x_{\bullet,j}\beta_j$). Thus, $S(c; \sfx_{\bullet,-j}, \sfy)$ 
	is the level set of a strictly concave function, and hence has measure zero. Indeed, the level set of a strictly convex function is the boundary of the corresponding super-level set (which must be convex), and the boundary of any convex set has measure zero~\cite{Lang1986}. Thus, the  conclusion~\eqref{nondegeneracy-assumption} thus follows.
	
	
	Now, assume that $g_\eta$ has a density with respect to Lebesgue measure. Since
	\begin{equation*}
	\begin{split}
	&\mathbb P[T^{\textnormal{opt}}_j([\srx, \srxk], \sry) = T^{\textnormal{opt}}_j([\srx, \srxk]_{\textnormal{swap}(j)}, \sry), \srx_{\bullet,j} \neq \srxk_{\bullet,j}] \\
	&\quad= \mathbb E[\mathbb P[T^{\textnormal{opt}}_j([\srx, \srxk], \sry) = T^{\textnormal{opt}}_j([\srx, \srxk]_{\textnormal{swap}(j)}, \sry), \srx_{\bullet,j} \neq \srxk_{\bullet,j}\ |\ \srx, \srxk]],
	\end{split}
	\end{equation*}
	it suffices to show that
	\begin{equation}
	\mathbb P[P(\sry|\srx_{\bullet,j}, \srx_{\bullet,-j}) = P(\sry|\srxk_{\bullet,j}, \srx_{\bullet,-j})\ |\ \srx, \srxk] = 0
	\label{sufficient-knockoffs}
	\end{equation}
	for all $\srx_{\bullet,j} \neq \srxk_{\bullet,j}$. From expression~\eqref{likelihood-expression}, we see that $P(\sry|\srx_{\bullet,j}, \srx_{\bullet,-j}) = P(\sry|\srxk_{\bullet,j}, \srx_{\bullet,-j})$ if and only if
	\begin{equation*}
	\underbrace{\beta_j (\srx_{\bullet,j} - \srxk_{\bullet,j})^T}_{\text{slope}} \sry - \underbrace{\psi(\beta_j \srx_{i,j} + f_{-j}(\srx_{i,-j})) + \psi(\beta_j \srxk_{i,j} + f_{-j}(\srx_{i,-j}))}_{\text{intercept}} = 0.
	\end{equation*}
	Since $\beta_j \neq 0$ by assumption, the slope $\beta_j (\srx_{\bullet,j} - \srxk_{\bullet,j}) \neq 0$ and therefore, the set $\{\sry: P(\sry|\srx_{\bullet,j}, \srx_{\bullet,-j}) = P(\sry|\srxk_{\bullet,j}, \srx_{\bullet,-j})\}$ is a hyperplane (and hence has Lebesgue measure zero). Together with the fact that $\sry$ has a density with respect to Lebesgue measure, this implies the relation~\eqref{sufficient-knockoffs}, so the conclusion~\eqref{nondegeneracy-assumption} follows. 
\end{proof}

\end{document}